\theoremstyle{plain}
\newtheorem{theorem}{Theorem}
\newtheorem{proposition}{Proposition}
\newtheorem{lemma}{Lemma}
\newtheorem{corollary}{Corollary}
\newtheorem*{conjecture*}{Conjecture}
\theoremstyle{definition}
\newtheorem{definition}{Definition}
\DeclareMathOperator{\ver}{\mathcal{V}}
\DeclareMathOperator{\hor}{\mathcal{H}}
\DeclareMathOperator{\whg}{\widehat{\gamma}}
\DeclareMathOperator{\Hess}{Hess}
\DeclareMathOperator{\Sc}{Scal}
\DeclareMathOperator{\Ric}{Ric}
\DeclareMathOperator{\cov}{cov}
\DeclareMathOperator{\tr}{Tr}
\theoremstyle{remark}
\numberwithin{theorem}{section}
\numberwithin{lemma}{section}
\numberwithin{proposition}{section}
\numberwithin{corollary}{section}
\numberwithin{definition}{section}
\numberwithin{remark}{section}
\numberwithin{equation}{section}
\title[Semi-Riemannian manifolds with  positive curvature tensor]
{On the fundamental group of semi-Riemannian manifolds with  positive curvature tensor}
\author{Jun-ichi Mukuno}
\address{Graduate School of Mathematics, Nagoya University,
	Furocho, Chikusaku, Nagoya 464-8602, Japan}
	\subjclass[2010]{53C50, 53C21}
\email{m08043e@math.nagoya-u.ac.jp}
\begin{document}
	\maketitle
	
	\begin{abstract}
		This paper presents an investigation of  
		the relation between  some positivity of the curvature  and  
		the finiteness of  fundamental groups in semi-Riemannian geometry. 
		We consider semi-Riemannian submersions $\pi : (E, g) \rightarrow (B, -g_{B}) $ under the condition 
		with   $(B, g_{B})$ Riemannian, 
		the fiber closed Riemannian, and the horizontal distribution integrable. 
		Then we prove that,
		if the lightlike geodesically complete or timelike geodesically complete semi-Riemannian manifold $E$ has some positivity of curvature,  
		then the fundamental group of the fiber is finite. 
		Moreover we construct an example of semi-Riemannian submersions with  some positivity of curvature, 
		  non-integrable horizontal distribution, and the finiteness of the fundamental group of the fiber.
	\end{abstract}
	\section{Introduction}
	This paper discusses our study of the fundamental group of semi-Riemannian manifolds with positive curvature tensor. 
	In the case of positive constant curvature, Calabi--Markus and Wolf proved the following theorem:   
	\begin{theorem}[Calabi--Markus~{\cite{Ca}} ($q=1$), Wolf~\cite{Wo}]\label{tm:Ca-Ma}
		Let $M$ be an $n$-dimensional geodesically complete
		semi-Riemannian manifold  of index $q$  
		with  constant positive curvature, 
		where $ n \geq 2q >0$.  
		Then the fundamental group $\pi_{1} (M)$ is finite. 
	\end{theorem}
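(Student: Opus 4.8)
The plan is to reduce the statement to the standard pseudo-sphere model and then run a compactness argument of Calabi--Markus type that uses the numerical hypothesis $n \ge 2q$ in an essential way. After rescaling the metric we may assume the constant curvature equals $+1$. Since $M$ is geodesically complete and of constant curvature, its universal semi-Riemannian covering $\widetilde{M} \to M$ is a complete, simply connected semi-Riemannian manifold of dimension $n$, index $q$, and constant curvature $+1$. By the Killing--Hopf-type classification of semi-Riemannian space forms, such a $\widetilde{M}$ is isometric to the pseudo-sphere
\[
S^{n}_{q} = \{\, x \in \mathbb{R}^{n+1}_{q} : \langle x, x \rangle = 1 \,\},
\]
where $\mathbb{R}^{n+1}_{q}$ carries the standard symmetric bilinear form of index $q$: indeed $n \ge 2q$ gives $n - q \ge q \ge 1$, and when $n - q \ge 2$ the model $S^{n}_{q}$ is simply connected so that $\widetilde{M} \cong S^{n}_{q}$, while the single low-dimensional endpoint $n - q = 1$ (which forces $q = 1$, $n = 2$) is handled separately. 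The fundamental group $\Gamma := \pi_{1}(M)$ acts on $\widetilde{M} \cong S^{n}_{q}$ by deck transformations, which are isometries, and this action is free and properly discontinuous; since $\mathrm{Isom}(S^{n}_{q}) = O(q, n+1-q)$ acting linearly on $\mathbb{R}^{n+1}_{q}$ and preserving $S^{n}_{q}$, it suffices to show that every subgroup of $O(q, n+1-q)$ acting properly discontinuously on $S^{n}_{q}$ is finite.

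For the core step, fix an arbitrary point $x_{0} \in S^{n}_{q}$; it is a spacelike unit vector, hence it lies in some maximal positive-definite subspace $V \subset \mathbb{R}^{n+1}_{q}$, and any such $V$ has dimension $n+1-q$. Then $K := V \cap S^{n}_{q}$ is the unit sphere of the Euclidean space $(V, \langle \cdot, \cdot \rangle|_{V})$, hence a round $(n-q)$-sphere, and in particular compact. The key point is that $K$ meets every translate $\gamma K$ with $\gamma \in \Gamma$: since $\gamma$ is a linear isometry, $\gamma V$ is again a maximal positive-definite subspace, whence
\[
\dim V + \dim(\gamma V) = 2(n+1-q) \ge n + 2 > n + 1 = \dim \mathbb{R}^{n+1}_{q},
\]
which forces $V \cap \gamma V \ne \{0\}$; any nonzero vector there is spacelike (it lies in $V$), so normalizing it to unit length yields a point of $K \cap \gamma K$. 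Thus $\gamma K \cap K \ne \emptyset$ for \emph{every} $\gamma \in \Gamma$, and since $K$ is compact while $\Gamma$ acts properly discontinuously, $\Gamma$ must be finite; hence $\pi_{1}(M)$ is finite.

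The delicate part is not the dimension count in the second paragraph, which cleanly isolates the role of the inequality $n \ge 2q$, but the reduction in the first paragraph: one must invoke the right uniqueness theorem identifying a complete simply connected semi-Riemannian space form with the pseudo-sphere, keep control of the topology of the model $S^{n}_{q}$, and dispose of the low-dimensional endpoint where $S^{n}_{q}$ is not simply connected, so that the compact ``test sphere'' $K$ together with properness applies to $\widetilde{M}$ itself and not merely to $S^{n}_{q}$.
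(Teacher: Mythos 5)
The paper does not prove this theorem at all: it is quoted as a classical result of Calabi--Markus (for $q=1$) and Wolf, so there is no in-paper argument to compare against. Your proof is precisely the classical Calabi--Markus argument from those sources: reduce to the pseudo-sphere model $S^{n}_{q}$ via the Killing--Hopf classification of complete simply connected space forms, realize $\pi_{1}(M)$ as a group acting properly discontinuously by restrictions of linear isometries of $\mathbb{R}^{n+1}_{q}$, and use the dimension count $\dim V+\dim(\gamma V)=2(n+1-q)\geq n+2>n+1$ to show that the compact sphere $K=V\cap S^{n}_{q}$ meets every one of its translates; properness of the deck action on the compact set $K$ then forces $\Gamma$ to be finite. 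This core step is correct, and noting that every nonzero vector of $V\cap\gamma V$ is spacelike (so it normalizes to a point of $K\cap\gamma K$) is exactly the right way to exploit the inequality $n\geq 2q$.

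The one genuine gap is the endpoint you set aside. When $n-q=1$, which under $n\geq 2q$ forces $(n,q)=(2,1)$, the case cannot be ``handled separately'' because the statement is false there as literally written: the de Sitter surface $S^{2}_{1}=\{-x_{1}^{2}+x_{2}^{2}+x_{3}^{2}=1\}\subset\mathbb{R}^{3}_{1}$ is geodesically complete, has index $1$ and constant curvature $+1$, and is diffeomorphic to $\mathbb{R}\times S^{1}$, so its fundamental group is $\mathbb{Z}$, which is infinite. The original theorems (and part (2) of Kobayashi's conjecture quoted in the introduction of this paper) carry the additional hypothesis $n\geq 3$; under $n\geq 3$ and $n\geq 2q$ one gets $n-q\geq 2$ automatically, $S^{n}_{q}$ is simply connected, and your argument is then complete. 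You should either add $n\geq 3$ to the hypotheses or record this counterexample explicitly; deferring the endpoint leaves the proof of the theorem as stated incomplete, and indeed no proof of that endpoint can exist.
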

	
	Kobayashi considered whether the finiteness of the fundamental group would continue to hold
	if we perturb the metric of positive constant curvature. 
	Kobayashi proposed the following conjecture: 
	\begin{conjecture*}[{Kobayashi~\cite{Kob_7}}]\label{con:Kob}
		Let $n$ and $q$ be positive integers with $n \geq 2q$. 
		Assume that $(M,\, g)$ is an $n$-dimensional  geodesically complete semi-Riemannian manifold of index $q$. 
		Suppose that we have a positive lower bound on the sectional curvature of $(M,\, g)$. 
		Then, 
		\begin{enumerate}
			\item $M$ is never compact; 
			\item if $n \geq 3$, the fundamental group of $M$ is always finite.
		\end{enumerate}
	\end{conjecture*}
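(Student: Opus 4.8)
The plan is to reduce the statement to a Bonnet--Myers type analysis that treats the spacelike and timelike directions separately, and then to control how these two regimes are glued together. Fix a point $p\in M$ and work on the universal cover $\widetilde{M}$, which is again geodesically complete of index $q$ with the same positive lower bound $\kappa>0$ on sectional curvature; the group of deck transformations $\Gamma\cong\pi_{1}(M)$ acts freely, properly discontinuously, and isometrically on $\widetilde M$, so part (2) is equivalent to showing that any such $\Gamma$ is finite. The basic dichotomy comes from the Jacobi equation $J''+R(J,\gamma')\gamma'=0$ along a unit-speed geodesic $\gamma$ with $\langle\gamma',\gamma'\rangle=\varepsilon$ and $\langle J,\gamma'\rangle=0$: writing $\langle R(J,\gamma')\gamma',J\rangle = \varepsilon\,K(\mathrm{span}\{J,\gamma'\})\,\langle J,J\rangle$, a spacelike normal field along a spacelike geodesic ($\varepsilon=+1$, $\langle J,J\rangle>0$) satisfies an oscillatory equation and must produce a conjugate point within parameter length $\pi/\sqrt{\kappa}$, exactly as in the Riemannian Bonnet--Myers theorem, whereas a spacelike normal field along a timelike geodesic ($\varepsilon=-1$) satisfies a convex equation and never vanishes. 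Thus positive curvature makes the spacelike directions behave spherically and the timelike directions behave like a flat or hyperbolic $\mathbb{R}^{q}$.

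With this in hand, the core of the argument is to organize the manifold around its maximal timelike distribution $\mathcal{T}$ (of rank $q$) and its spacelike complement $\mathcal{S}$ (of rank $n-q$). I would first establish a spacelike Myers bound: a geodesic whose velocity lies in a region where $\mathcal S$ is sufficiently spacelike has a conjugate point within a uniform parameter bound, so spacelike arcs longer than $\pi/\sqrt{\kappa}$ cannot be minimizing for the induced Riemannian data on a spacelike slice. I would then show that the timelike directions contribute nothing to the fundamental group: each complete timelike geodesic is, by the convexity above, free of conjugate points, so the timelike fibers are contractible and the inclusion of a spacelike core into $\widetilde M$ is a $\pi_{1}$-isomorphism. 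Combining compactness of the spacelike core (from the Myers bound) with this isomorphism would force $\pi_{1}(M)$ to be finite once $n-q\geq 2$, which under $n\geq 2q$ is guaranteed precisely when $n\geq 3$ (the excluded case $n-q=1$ being the de Sitter surface $S^{2}_{1}$, where $\pi_{1}=\mathbb{Z}$). For part (1), the same convexity shows that a complete timelike geodesic opens up indefinitely and cannot return, which is incompatible with compactness; alternatively, a compact $M$ would carry a closed timelike geodesic whose spacelike index form, being positive, contradicts the absence of conjugate points.

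The hard part will be the passage from these direction-by-direction statements to a genuine global splitting, because $\mathcal{T}$ and $\mathcal{S}$ need not be integrable and parallel transport mixes them: the mixed sectional curvatures of planes spanned by one timelike and one spacelike vector enter the index form with an unfavorable sign, and the holonomy can rotate a spacelike Jacobi field into a timelike one along a geodesic, destroying the clean dichotomy above. This is exactly the obstacle that forces the integrability hypothesis on the horizontal distribution in the main theorem of this paper; there the submersion structure provides the splitting $\widetilde M\cong \mathbb{R}^{q}\times F$ for free, after which the spacelike Myers bound applies to the closed Riemannian fiber $F$ and yields finiteness of $\pi_{1}(F)$ directly. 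For the conjecture in full generality one would need either a synthetic comparison argument that tolerates non-integrability, controlling the evolution of the causal character of Jacobi fields through a Riccati or Raychaudhuri analysis that keeps track of the mixed curvature terms, or a rigidity step showing that a positive lower bound on all non-degenerate sectional curvatures forces enough integrability of $\mathcal{T}$ to recover the splitting up to finite cover. Establishing one of these two alternatives is where the real work, and the reason the problem remains a conjecture, lies.
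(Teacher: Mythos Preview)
Your approach misses the decisive observation that the paper invokes in the very next paragraph: Kulkarni's theorem. In a semi-Riemannian manifold of genuinely indefinite signature (dimension at least $3$, $0<q<n$), any one-sided bound on the sectional curvature over all non-degenerate $2$-planes forces the sectional curvature to be \emph{constant}. The mechanism is purely algebraic: on an indefinite inner product space the function $\sigma\mapsto K(\sigma)$ is unbounded both above and below on the Grassmannian of non-degenerate planes unless the curvature tensor already has the constant-curvature form $R(X,Y)Z=k(\langle Y,Z\rangle X-\langle X,Z\rangle Y)$. Hence a positive lower bound on sectional curvature gives constant positive curvature, and the conjecture reduces immediately to the Calabi--Markus/Wolf theorem on complete space forms of index $q$ quoted just before it. This is precisely why the paper abandons the sectional-curvature hypothesis and passes to the Andersson--Howard condition $R\geq k$, which is a genuinely weaker bound that does not collapse to constant curvature.

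Your Bonnet--Myers/splitting programme is therefore aimed at a non-problem: the integrability and mixed-Jacobi difficulties you worry about never arise under the stated hypothesis, because the local geometry is already completely determined and the Jacobi equation along any geodesic decouples trivially. You also end by calling the statement ``a conjecture'' that ``remains'' open, which is a misreading---the paper is pointing out that it is \emph{settled}, and trivially so, which is the motivation for replacing the curvature hypothesis. Your intuitions are not wasted, though: they are roughly the right picture for the Andersson--Howard condition, and indeed the paper's main theorem imposes exactly the integrability hypothesis you flagged as the obstruction in order to carry out a splitting argument of the kind you sketch. But for the conjecture as literally stated, the missing idea is simply Kulkarni's rigidity theorem; no direct geometric argument is needed or appropriate.
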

	In the previous paper~\cite{MR3353737}   we  remarked that the conjecture  
	is true by using 
	Kulkarni's theorem ~\cite{MR522040} that states that
	the one-sided bound on the sectional curvature leads to constant curvature.   
	Therefore, we 
	replace the curvature condition of the conjecture by another condition.

	We study the following curvature condition of  
	Andersson and Howard~\cite{MR1664893}:  
	\begin{equation}\label{ineq : curv}
	g( R(u, v) v, u )
	\geq
	k (g(u, u) g(v, v) - g(u, v)^{2})
	\end{equation}
	for any tangent vectors $u, v$, where $R$ is the curvature tensor.
	Following Andersson--Howard~\cite{MR1664893}, we denote this condition by $R \geq k$.
	Reversing the inequality (\ref{ineq : curv}),   we write the condition as  $R \leq k$. 
	These conditions are  extensions of  
	the curvature  conditions  of Riemannian geometry that 
	the sectional curvature is bounded  above or below. 
	Alexander--Bishop~\cite{MR2425468} found 
	that these  conditions geometrically  means a signed local triangle comparison condition. 
	Under the curvature conditions of Andersson--Howard, 
	several analogues of  Riemannian comparison theory  have been obtained,  
	for instance,  gap rigidity theorem (Andersson--Howard~\cite{MR1664893}),   
	 volume comparison theorem (D{\'{\i}}az-Ramos--Garc{\'{\i}}a-R{\'{\i}}o--Hervella~
	\cite{MR2148906}), and local triangle comparison theorem (Alexander--Bishop \cite{MR2425468}).

	Our previous paper~\cite{MR3353737}  
	presents our study of an analogy of the Myers theorem in Lorentzian geometry.  
	In the current paper, we investigate an analogy of the Myers theorem in semi-Riemannian geometry. 
	A semi-Riemannian manifold $E$ is said to be \emph{lightlike} \emph{geodesically complete} 
	(resp.\  \emph{timelike} \emph{geodesically complete})
	if  any inextensible  lightlike (resp.\ timelike ) geodesic is defined on the real line.  
	We obtain the following theorem:  
	\begin{theorem}\label{5}
		Let $(E,\ g)$ be an either  lightlike geodesically complete or  timelike geodesically complete 
		semi-Riemannian manifold with $R \geq k > 0$, and  
		$(B,\ g_{B})$ a complete Riemannian manifold of dimension greater than or equal to $2$.  
		Suppose that there exists a semi-Riemannian submersion 
		$\pi: (E,\ g) \rightarrow (B,\ -g_{B})$ 
		such that the fibers are closed Riemannian manifolds, that 
		the dimension of  the fibers is greater than or equal to $2$, 
		and that the horizontal distribution is integrable. 
		Then the induced homomorphism $\pi_{*} : \pi_{1}(E) \rightarrow  \pi_{1}(B)$ is surjective and has a finite kernel.   
		Moreover, the fundamental group of the fibers is finite. 
	\end{theorem}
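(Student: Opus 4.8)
The plan is to separate the purely topological half of the statement, which comes from the integrability hypothesis alone, from the curvature half, where the completeness hypothesis does the essential work, and then to settle the finiteness of $\pi_{1}(F)$ by a Bonnet--Myers type argument on a single fibre. For the topological part: since $\hor$ is integrable, the O'Neill integrability tensor $A$ vanishes, so each leaf of $\hor$ is totally geodesic in $E$; and because $g$ restricted to such a leaf is $-\pi^{*}g_{B}$, which is negative definite, its geodesics are, up to reparametrisation, timelike geodesics of $E$. Hence in the timelike-complete case each leaf is a complete Riemannian manifold on which $\pi$ restricts to a Riemannian covering of $(B,g_{B})$, so $\hor$ is a flat Ehresmann connection, $\pi\colon E\to B$ is a locally trivial fibre bundle, and the homotopy exact sequence $\pi_{1}(F)\to\pi_{1}(E)\xrightarrow{\pi_{*}}\pi_{1}(B)\to\pi_{0}(F)=0$ shows that $\pi_{*}$ is surjective and that $\ker\pi_{*}$ is the image of $\pi_{1}(F)$. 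It therefore suffices to prove that $\pi_{1}(F)$ is finite; in the lightlike-complete case the leaves are not a priori complete and one runs the analogous reduction with lightlike geodesics instead, but the conclusion is the same.

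For the curvature part, fix a fibre $F_{b}$ with its induced Riemannian metric and regard it as a spacelike submanifold of $E$, whose second fundamental form is O'Neill's tensor $T$, taking values in the negative definite horizontal bundle. Tracing the Gauss equation over a $g|_{F_{b}}$-orthonormal frame gives, for a unit vector $V$ tangent to $F_{b}$,
\[
\Ric^{F_{b}}(V,V)=\sum_{e_{j}\perp V}g\bigl(R(V,e_{j})e_{j},V\bigr)+g\bigl(T_{V}V,\ \textstyle\sum_{j}T_{e_{j}}e_{j}\bigr)+\sum_{j}\bigl|\pi_{*}T_{V}e_{j}\bigr|_{g_{B}}^{2},
\]
in which each of the $\dim F-1$ sectional terms is $\ge k$ by $R\ge k$, the last sum is nonnegative, and only the middle term, built from the mean curvature of the fibre, has no sign a priori. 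The key step is to control the fibres' shape operators by completeness: along a complete horizontal geodesic $\gamma$ the $(1,1)$-shape operator $\mathcal{S}$ of the fibres satisfies the Riccati equation $\mathcal{S}'+\mathcal{S}^{2}+R_{\dot\gamma}=0$ with $R_{\dot\gamma}\ge -k\,\mathrm{Id}$ on the vertical space (exactly the vertical content of $R\ge k$), and since $\mathcal{S}$ is a restriction of the smooth tensor $T$ it is defined on the whole real line, which forces an a priori two-sided bound on $\mathcal{S}$ and hence on the extrinsic geometry of every fibre. Feeding this bound, together with the fact that $(B,g_{B})$ has sectional curvature $\le -k$ (the content of $R\ge k$ restricted to the totally geodesic horizontal leaves), back into the identity above should yield $\Ric^{F_{b}}\ge c>0$; since $\dim F\ge 2$, Bonnet--Myers then makes the universal cover of $F_{b}$ compact, so $\pi_{1}(F)$ is finite, and with the first part the theorem follows.

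I expect the main obstacle to be precisely this last estimate, and more precisely making the Riccati bound quantitatively strong enough to dominate all the terms produced by $T$ in the Gauss equation: the fibres are in general neither totally geodesic nor minimal, so no purely pointwise argument can work. Indeed, warped products such as $H^{n}(-k)\times_{f}T^{m}$ with $f$ a suitable exponential of a Busemann function of $H^{n}(-k)$ satisfy $R\ge k>0$, have integrable horizontal distribution, and have closed fibres of infinite fundamental group, being excluded only by their failure to be lightlike or timelike complete. Thus completeness is not a technical convenience but exactly the hypothesis that prevents the extrinsic geometry of the fibres from degenerating, and the crux is converting it into an effective inequality compatible with every term of the Gauss equation.
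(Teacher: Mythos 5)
Your topological reduction and your Riccati step both match the paper: integrability gives $A=0$, the horizontal leaves are totally geodesic copies of the complete manifold $(B,-g_{B})$, the homotopy exact sequence (together with the Cartan--Hadamard theorem applied to $B$, whose curvature is $\le -k$ by the O'Neill formula) reduces everything to the finiteness of $\pi_{1}(F)$, and the Riccati inequality $h'\le k-h^{2}$ along horizontal geodesics, defined on all of $\mathbb{R}$ because the leaves are isometric to the complete $B$ (note: no lightlike or timelike completeness of $E$ is needed here), yields $|T_{V}V|\le\sqrt{k}$ for unit vertical $V$. This is exactly the paper's Lemma on the boundedness of $T$.

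The gap is the final step, and it is fatal to the plan as stated: the bound $|T_{V}V|\le\sqrt{k}$ is \emph{exactly} borderline. Feeding it into the Gauss equation gives, for unit vertical $U,V$, only $K^{\perp}(U,V)\ge k-|T_{U}U|\,|T_{V}V|\ge k-(\sqrt{k})^{2}=0$, hence $\Ric^{F_{b}}\ge 0$ and nothing more; Bonnet--Myers does not apply. Moreover the Riccati comparison cannot be sharpened to a strict inequality, since $h\equiv\sqrt{k}$ is a solution of $h'=k-h^{2}$ defined on all of $\mathbb{R}$ --- this is precisely what happens for the warped products over a Busemann function that you yourself cite, where the fibres are flat tori. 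So no pointwise estimate, however cleverly traced, can close the argument, and your own final paragraph essentially concedes this without supplying the missing mechanism. What the paper does instead is: (i) conclude from $K^{\perp}\ge 0$ via the Toponogov/Cheeger--Gromoll splitting theorem that the universal cover of the fibre splits off a Euclidean factor $\mathbb{R}^{l}$ with $\mathbb{Z}^{l}$ of finite index in $\pi_{1}(F)$; (ii) assume $l>0$ and exploit the resulting \emph{equality} in the Cauchy--Schwarz step to show the fibres are totally umbilical with $T_{U}V=\sqrt{k}\,g(U,V)\nu$, whence $E$ is a twisted product, and then, by maximum-principle arguments on the scalar and Ricci curvature of the torus factor, a genuine warped product over a signed distance function on $B$; (iii) explicitly solve the geodesic ODEs of Proposition~\ref{geodesic} in that warped product to exhibit inextensible lightlike and timelike geodesics defined only on a half-line, contradicting the completeness hypothesis. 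Thus the lightlike/timelike completeness enters only at this last, global stage, not through a quantitative strengthening of the shape-operator bound as you propose. Without steps (i)--(iii) or a substitute for them, your argument proves only that the fibres have nonnegative curvature, which is consistent with $\pi_{1}(F)$ being infinite.
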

	We should remark that the theorem includes not only warped products but also non-warped products. 
	In fact, the semi-Riemannian product 
	$(\mathbb{H}^{l} \times S^{m}, -g_{\mathbb{H}^{l}}+g_{S^{m}})$ satisfies the assumptions of the theorem, where  
	$(\mathbb{H}^{l}, g_{\mathbb{H}^{l} } )$ is the $l$-dimensional hyperbolic space, and 
	$(S^{m}, g_{S^{m}} )$ is the $l$-dimensional sphere.  
	By the stability of the lightlike geodesic completeness by Beem--Ehrlich~\cite{MR898152}, 
	we can perturb the fiber  metric $ (S^{m}, g_{S^{m}})$ of 
	the product $(\mathbb{H}^{l} \times S^{m}, -g_{\mathbb{H}^{l}}+g_{S^{m}})$  
	with the assumption satisfied. 
	
	Note that the base manifold $(B, g_{B})$ has 
	negative curvature bounded above by $-k$ by Lemma~\ref{curvature} of the paper. 
	Hence, the theorem leads us to the following corollary:
	\begin{corollary}
		Assume that, in addition to the hypotheses of the theorem, $E$ is a closed semi-Riemannian manifold. 
		Then the fundamental group $\pi_{1}(E)$ has exponential growth. 
	\end{corollary}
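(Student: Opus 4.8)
The plan is to reduce the statement to a classical theorem of Milnor, which asserts that the fundamental group of a closed Riemannian manifold of negative sectional curvature has exponential growth, and then to transport this conclusion from $\pi_{1}(B)$ to $\pi_{1}(E)$ along the surjection $\pi_{*}$ furnished by Theorem~\ref{5}.

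First I would observe that $B$ is itself closed: the semi-Riemannian submersion $\pi\colon E\to B$ is in particular a smooth submersion, hence an open map, and $\pi(E)$ is compact because $E$ is; since $B$ is connected, $\pi$ is onto and $B$ is compact. Next, by Lemma~\ref{curvature} the sectional curvature of $(B,g_{B})$ is bounded above by $-k$, and since $k>0$ this means that every sectional curvature of the closed manifold $B$ is negative (and, by compactness, also bounded below). Milnor's theorem then applies to $(B,g_{B})$ and yields that $\pi_{1}(B)$ has exponential growth. Concretely, one may pass to the universal cover, which is Cartan--Hadamard, note that $\pi_{1}(B)$ acts on it cocompactly by isometries, and use the Bishop--G\"unther volume comparison against the hyperbolic space of curvature $-k$ to see that metric balls in the universal cover, and hence the growth function of $\pi_{1}(B)$, grow at least exponentially.

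It then remains to pass from $\pi_{1}(B)$ to $\pi_{1}(E)$. By Theorem~\ref{5} the homomorphism $\pi_{*}\colon\pi_{1}(E)\to\pi_{1}(B)$ is surjective with finite kernel. Fixing a finite generating set $S$ of $\pi_{1}(E)$, the set $\pi_{*}(S)$ generates $\pi_{1}(B)$, and since $\pi_{*}$ does not increase word length it maps the ball of radius $n$ in $(\pi_{1}(E),S)$ onto the ball of radius $n$ in $(\pi_{1}(B),\pi_{*}(S))$; hence the growth function of $\pi_{1}(E)$ dominates that of $\pi_{1}(B)$, so $\pi_{1}(E)$ has exponential growth as well. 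Alternatively, the finiteness of $\ker\pi_{*}$ makes $\pi_{1}(E)$ quasi-isometric to $\pi_{1}(B)$, and exponential growth is a quasi-isometry invariant.

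I do not expect a genuine obstacle: once Theorem~\ref{5} and Lemma~\ref{curvature} are in hand, the argument is little more than an invocation of Milnor's theorem together with elementary facts about the growth of finitely generated groups. The only points that need a word of justification are that $B$ is compact and connected, so that Milnor's theorem is applicable, and that a negative upper curvature bound on a closed manifold is automatically a two-sided bound, so that comparison geometry may be used; both are routine.
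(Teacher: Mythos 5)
Your proposal is correct and follows exactly the route the paper intends: the paper derives the corollary immediately from Lemma~\ref{curvature} (curvature of $B$ bounded above by $-k$) together with the surjection $\pi_{*}\colon\pi_{1}(E)\to\pi_{1}(B)$ from Theorem~\ref{5}, with Milnor's theorem on exponential growth for closed negatively curved manifolds supplying the key input. You have merely written out the routine details (compactness of $B$, transfer of growth along a surjection) that the paper leaves implicit.
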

	
	The  theorem gives a sufficient condition of geodesic incompleteness by its contraposition: 
	\let\temp\thetheorem
	\renewcommand{\thetheorem}{\ref{5}}
	 \begin{theorem}
		Let $(E,\ g)$ be 
		semi-Riemannian manifold with $R \geq k > 0$, and  
		$(B,\ g_{B})$ a complete Riemannian manifold of dimension greater than or equal to $2$. 
		Assume that there exists a semi-Riemannian submersion 
		$\pi: (E,\ g) \rightarrow (B,\ -g_{B})$ 
		such that the fibers are closed Riemannian manifolds, that 
		the dimension of  the fibers is greater than or equal to $2$, that the fundamental group of the fibers  is infinite, and 
		that the horizontal distribution is integrable. 
		Then $(E,\ g)$ is neither   lightlike geodesically complete nor  timelike geodesically complete.  
	\end{theorem}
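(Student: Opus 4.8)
The statement to be proved is the contrapositive of Theorem~\ref{5} in its first form, so the plan is to prove that form directly: assuming $(E,g)$ is lightlike geodesically complete or timelike geodesically complete, and keeping all the remaining hypotheses, one shows that $\pi_{1}$ of a fibre is finite (which also yields the surjectivity of $\pi_{*}$ and the finiteness of $\ker\pi_{*}$). Since $\pi$ is a surjective submersion with compact fibres it is proper, hence a locally trivial fibre bundle with a closed connected fibre $F$, and the homotopy exact sequence of the bundle shows $\pi_{*}$ is onto and $\ker\pi_{*}$ is a quotient of $\pi_{1}(F)$; so it suffices to prove $\pi_{1}(F)$ finite, equivalently that the universal cover $\widehat F$ is compact, for which (by Bonnet--Myers) it is enough to bound $\Ric_{F}$ below by a positive constant. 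The structural input I would use is that integrability of the horizontal distribution forces the O'Neill integrability tensor to vanish, so the horizontal leaves are totally geodesic in $E$ and $\pi$ restricts on each of them to a local isometry onto $(B,-g_{B})$ (and, by Lemma~\ref{curvature}, $g_{B}$ has sectional curvature $\le -k<0$).

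Next I would bound the fibres extrinsically. Fix a fibre $F_{0}$; its normal bundle in $E$ is the horizontal distribution, on which $g$ is negative definite, so geodesics normal to $F_{0}$ are timelike and, by total geodesy of the leaves, stay inside a horizontal leaf; in the timelike-complete case they are complete. Along such a unit timelike geodesic $\gamma$ the vertical distribution is invariant under $\nabla_{\dot\gamma}$, and the associated shape operator $S_{\dot\gamma}$ of the fibres obeys a Riccati equation $S_{\dot\gamma}'+S_{\dot\gamma}^{2}+\mathcal R_{\dot\gamma}=0$ with $g(\mathcal R_{\dot\gamma}V,V)=g(R(V,\dot\gamma)\dot\gamma,V)\ge -k\,g(V,V)$, the last inequality being $R\ge k$ applied to the plane spanned by $\dot\gamma$ and the vertical vector $V$, whose area $g(\dot\gamma,\dot\gamma)g(V,V)$ is negative. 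Tracing gives $(\operatorname{tr}S_{\dot\gamma})'\le -(\operatorname{tr}S_{\dot\gamma})^{2}/\dim F+k\dim F$, so completeness of $\gamma$ prevents $\operatorname{tr}S_{\dot\gamma}$ from ever leaving $[-\sqrt k\,\dim F,\ \sqrt k\,\dim F]$; applying this also to $-\gamma$ and to the individual eigenvalues yields a uniform bound on the second fundamental forms of the (compact) fibres. In the lightlike-complete case I would run the identical comparison along null geodesics: each null direction splits uniquely into horizontal and vertical parts of equal norm, $R\ge k$ specialises on the corresponding degenerate planes to the null convergence inequality $g(R(\cdot,\dot\gamma)\dot\gamma,\cdot)\ge 0$ on the screen, and one uses the Raychaudhuri equation in place of the Riccati one to get the same conclusion.

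Finally one feeds this bound into the Gauss equation of the Riemannian fibre $F$ inside $(E,g)$ and uses $R\ge k$ together with $\dim F\ge 2$ to estimate $\Ric_{F}$ from below. Here is the main obstacle: the O'Neill correction terms in the Gauss equation are inner products of horizontal vectors, on which $g$ is negative definite, hence indefinite in sign, so what comes out naively is only $\Ric_{F}\ge(\dim F-1)k-C$ for some constant $C$. That $C$ cannot simply be discarded — the warped products $(\mathbb H^{l}\times S^{m},\,-g_{\mathbb H^{l}}+\lambda^{2}g_{S^{m}})$ with non-constant $\lambda$ satisfy all the hypotheses and have fibres that are not totally geodesic, so there is no reduction to a metric product. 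I expect the real work to lie in making the comparison quantitative: exploiting both that the fibres are compact (so one may argue at an extremum of the offending curvature quantity, or integrate a Bochner/Codazzi identity over a fibre) and that the geodesic completeness of $E$ has already been spent in the Riccati/Raychaudhuri estimate above, one should be able to absorb $C$ and recover a genuine positive lower bound on $\Ric_{F}$. Once that is in place, Bonnet--Myers makes $\widehat F$ compact, $\pi_{1}(F)$ finite, and the theorem follows by contraposition.
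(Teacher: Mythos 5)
There is a genuine gap, and it sits exactly where you flag ``the real work'': your plan is to upgrade the extrinsic bound on the second fundamental form of the fibres to a strict positive lower bound on $\Ric_{F}$ and finish with Bonnet--Myers, but no such bound holds under the hypotheses, so no amount of Gauss-equation bookkeeping can close it. The first half of your argument does match the paper: integrability makes the horizontal leaves totally geodesic copies of $(B,-g_{B})$, and a Riccati inequality along the (timelike) horizontal geodesics gives $|T_{V}V|\le\sqrt{k}$ for unit vertical $V$ (Lemma~\ref{le:bded}); note, however, that these geodesics are complete already because $B$ is complete, so the lightlike/timelike completeness of $E$ has \emph{not} in fact been ``spent'' at this stage. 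Feeding $|T_{V}V|\le\sqrt{k}$ into O'Neill's formula yields only $K^{\perp}\ge k-(\sqrt{k})^{2}=0$, and this is sharp: $(\mathbb{H}^{l}\times\mathbb{T}^{m},\,-g_{\mathbb{H}^{l}}+e^{2b}g_{\mathbb{T}^{m}})$ attains equality, has flat fibres with $\Ric_{F}\equiv 0$, and satisfies every hypothesis except completeness. Hence the constant $C$ you hope to absorb genuinely cannot be absorbed by any pointwise estimate; the completeness hypothesis must enter somewhere else.

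The paper's route from $K^{\perp}\ge 0$ is rigidity rather than strict positivity. It applies the Toponogov/Cheeger--Gromoll splitting theorem (Theorem~\ref{tm:split}) to the closed non-negatively curved fibre, so an infinite $\pi_{1}(F)$ forces a Euclidean factor $\mathbb{R}^{l}$ with $l>0$. The flat directions force equality in the Cauchy--Schwarz step of the curvature estimate, which makes the fibres totally umbilical with $T_{U}V=\sqrt{k}\,g(U,V)\nu$ for a unit timelike horizontal $\nu$; from this $E$ is shown to be a twisted and then (via maximum-principle arguments on the torus factor) a genuine warped product with $g_{B}(\nabla^{B}\alpha,\nabla^{B}\alpha)=k$ (Proposition~\ref{prop : str}). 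Only then is completeness used: solving $s''=\sqrt{k}\,(s')^{2}$ (resp.\ $s''=\sqrt{k}\,((s')^{2}-1)$) produces explicit lightlike (resp.\ timelike) geodesics that run off to infinity in $B$ in finite parameter time, contradicting the assumed completeness. This rigidity-plus-explicit-incomplete-geodesic mechanism is the heart of the proof and is entirely absent from your proposal. Your fallback of a Raychaudhuri argument along null geodesics is also doubtful as stated: the fibres have codimension $\dim B\ge 2$ and a null geodesic of $E$ has a nonzero vertical component, so it does not stay in a horizontal leaf and the screen/trapped-surface machinery does not apply directly --- this is essentially the Penrose-type argument the paper explicitly says it cannot use in the semi-Riemannian setting.
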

	\let\thetheorem\temp
	\addtocounter{theorem}{-1}
	Let $(B, g_{B})$ be a Riemannian manifold with  negative curvature bounded above by $-k$,   
	$F$ a closed manifold of which the fundamental group is infinite, and let $ g_{F}^{*}=\{ g_{F}^{b}\}_{b \in B}$ 
	be a smooth family of Riemannian metrics of $F$. 
	By the theorem we see that  
	  $(B \times F, -g_{B}+ g_{F}^{*})$  
	  is never geodesically complete, satisfying $R \geq k > 0$. 
	 In fact, $(\mathbb{H}^{l} \times \mathbb{T}^{m}, -g_{\mathbb{H}^{l}}+ e^{2 b} g_{\mathbb{T}^{m}})$, which is 
	 a special case of  Alexander--Bishop \cite[Example7.5 (c)]{MR2425468},   
	 satisfies $R \geq 1 > 0$, but 
	 is not geodesically complete, where $b$ is a Busemann function of   $\mathbb{H}^{l}$.  

	Note that there exists a semi-Riemannian submersion with $R \geq k > 0$ and  
	 without the integrability of horizontal distribution. 
	In fact, we construct  a  semi-Riemannian example $\pi : SU(2,1)/S^{1} \rightarrow SU(2,1)/U(2)$,
	 which satisfies $R \geq k > 0$ and of which the horizontal distribution is not integrable and fiber $U(2)/S^{1}$ has a finite fundamental group. 
	This construction is motivated by the previously reported construction of the positively curved Riemannian manifold $SU(3)/S^{1}$ ~\cite{Wa}. 
	It would be interesting to determine whether we can construct new semi-Riemannian manifolds with $R \geq k > 0$  
	by using  the construction methods of  curved  Riemannian manifolds. 
	In respect of Theorem~\ref{5},  
	in general we do not know whether the theorem can be extended to semi-Riemannian submersions without the integrability of the horizontal distribution.  
	We conjecture that we expect it to be possible to remove the integrability of the horizontal distribution from the theorem. 
	\subsection*{Organization of the paper} 
	In Section $2$, we introduce some notions and propositions on 
	semi-Riemannian submersions needed for proving Theorem~\ref{5}. 
	In Section $3$, we prove Theorem~\ref{5}. 
	In Section $4$, we construct a semi-Riemannian submersion with $R \geq k >0$ 
	and  non-integrable horizontal distribution. 
	\subsection*{Acknowledgements}
	The author wishes to express his gratitude toward Kei Funano, Hiroyuki Kamada, and Shin Nayatani 
	for valuable comments. 
	This work is supported by a JSPS Grant-in-Aid for Young Scientists (B) No.\ 15K17537. 
	
	\section{Preliminary}
	A \emph{semi-Riemannian metric} $g$ of 
	a  manifold $M$ is  a  symmetric  non-degenerate $(0, 2)$ tensor. 
	If  a  manifold $M$ is equipped with  a semi-Riemannian metric $g$, $(M, g)$ is said to be a \emph{semi-Riemannian} manifold.
	In this section, we recall some terminology and results of semi-Riemannian geometry,  
	which can mainly be found in the papers by O'Neill~\cite{MR0200865, MR0216432, On}.
	Note that, although O'Neill~\cite{MR0200865, MR0216432} considered the Riemannian case, many results  are generalized to semi-Riemannian cases.
	\begin{definition}[{O'Neill~\cite[Definition 44]{On}}]
		Let $(E, g)$ and $(B, g_{B})$ be  semi-Riemannian manifolds.   
		A map $\pi : E \rightarrow B$  is a \emph{semi-Riemannian submersion}  
		if $\pi$ is a submersion satisfying the following conditions: 
		\begin{itemize}
			\item The fibers $\pi^{-1} (x) $ are semi-Riemannian manifolds for any $ x \in B$;
			\item  differential map  ${\pi_{*}}_{p} :  (T_{p} \pi^{-1} (\pi (p))^{\perp} \rightarrow T_{\pi (p)} B $ is isometric for any $p \in E$. 
		\end{itemize}
	\end{definition}
	Especially,  this research only considers the case in which the fiber and  $(B, -g_{B})$ are  Riemannian. 
	Tangent vectors normal (resp.\  tangent ) to fibers  are known as \emph{horizontal} (resp.\  \emph{vertical}) vectors. 
	A distribution $\mathcal{H}$  (resp.\ $\mathcal{V}$) over $E$ is \emph{horizontal}  (resp.\ \emph{vertical})
	if $\mathcal{H}_{p}$ (resp.\  $\mathcal{V}_{p}$) is a tangent subspace in $E$ of which the elements are horizontal (resp.\  vertical).
	Note that $T_{p} E =  \mathcal{H}_{p} \oplus  \mathcal{V}_{p}$.  
	Then for any tangent vector $v \in T_{p} E$,   $v^{\hor} \in \mathcal{H}_{p}$ and  
	$v^{\ver} \in \mathcal{V}_{p}$  are given by $v=v^{\hor}+ v^{\ver}$.  
	For a tangent vector $X$ of the base $B$, 
	a  tangent vector  $\widehat {X}$ on $E$ 
	is termed  a \emph{lift} of  $X$ 
	if $\widehat{X}$ is horizontal and $\pi_{*} \widehat{X} =X$. 
	We often identify   vectors or vector fields on $B$ with their lifts. 
	A vector field $X$ is said to be \emph{basic}  
	if  $X$ is horizontal and $\pi_{*} \widehat{X}$ is independent of   any points of  fibers. 
	Every vector field on $B$ has a unique  horizontal and basic lift on $E$.

	We denote by $\widehat \nabla$ (resp.\ $\nabla^{B}$) the Levi-Civita connection of $(E, g)$ (resp.\ $(B, g_{B})$). 
	O'Neill~\cite{MR0200865} defined the following important tensors: 
	\begin{definition}[O'Neill~\cite{MR0200865}]
		$(2, 1)$ tensor fields $T$ and $A$ on $E$ are defined by satisfying that,  for  any tangent vectors $v,\ w$ of $E$, 
		\begin{align*}
		T_{v} w &=( \widehat{\nabla}_{v^{\ver} } w^{\ver})^{\hor} + ( \widehat{\nabla}_{v^{\ver} } w^{\hor})^{\ver}; \\
		A_{v} w &=( \widehat{\nabla}_{v^{\hor} } w^{\hor})^{\ver} + ( \widehat{\nabla}_{v^{\hor} } w^{\ver})^{\hor} .
		\end{align*} 
	\end{definition}
	Note that for vertical tangent vectors $V$, $W$
	the tensor $T_{V} W$ is the second fundamental form of the fiber. 
	It is well known that the horizontal distribution is integrable if and only if $A=0$. Therefore, 
	the case we consider in this work is $A=0$. 
	The following formulas of the curvature by the tensors $T$ and $A$ hold: 
	\begin{proposition}[{O'Neill~\cite[Corollary 1]{MR0200865}}] \label{pr: ON}
		Let $\widehat K$, $K_{*}$  and $K^{\perp}$  be the sectional curvatures of $E$ and $B$, a fiber, respectively. 
		Let $X$, $Y$ be horizontal vectors, and let $V$, $W$ be   vertical vectors. 
		Then we have
		\begin{align}
		\widehat{K}(X,  Y) &= K_{*}(\pi_{*} X,  \pi_{*}  Y)-  \frac{3 g(A_{X} Y, A_{X} Y)}{g(X, X) g(Y,Y)-g(X,Y)^{2}}; \label{pr: ON-1} \\
		\widehat{K}(V,  W) &= K^{\perp}(V,  W)- \frac{g(T_{V}V, T_{W}W ) - g(T_{V} W,  T_{V} W )}{g(V, V) g(W, W)-g(V,W)^{2}}. \label{pr: ON-2}
		\end{align}
	\end{proposition}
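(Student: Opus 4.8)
The plan is to derive both identities from the O'Neill decomposition of the Levi-Civita connection $\widehat{\nabla}$ of $(E,g)$ and then to contract the resulting identity for the curvature tensor $\widehat{R}$ of $E$ with repeated indices. Three algebraic facts drive the computation: the symmetry $T_{V}W = T_{W}V$ of $T$ on vertical vectors, the skew-symmetry $A_{X}Y = -A_{Y}X$ of $A$ on horizontal vectors, and the pointwise skew-symmetries $g(T_{V}Z, W) = -g(Z, T_{V}W)$ and $g(A_{X}Z, W) = -g(Z, A_{X}W)$. Each of these is a consequence of torsion-freeness and metric compatibility of $\widehat{\nabla}$ together with the nondegeneracy of $g$ alone, so every one of them transfers verbatim to the semi-Riemannian setting; positive-definiteness is never used. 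I work throughout with nondegenerate horizontal and vertical planes, so that the Gram determinants $g(X,X)g(Y,Y) - g(X,Y)^{2}$ and $g(V,V)g(W,W) - g(V,W)^{2}$ are nonzero and the sectional curvatures are defined.

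For (\ref{pr: ON-2}) I would observe that, on vertical vectors, $T_{V}W = (\widehat{\nabla}_{V}W)^{\hor}$ is precisely the second fundamental form of the fiber regarded as a nondegenerate semi-Riemannian submanifold of $E$, while the vertical part of $\widehat{\nabla}_{V}W$ is the fiber's own Levi-Civita connection. Identity (\ref{pr: ON-2}) is then the Gauss equation for the fiber. Concretely, I would expand $g(\widehat{R}(V,W)W, V)$ through $\widehat{\nabla}$, separate horizontal and vertical components, and read off that the intrinsic contribution is $g(R^{\mathrm{fib}}(V,W)W, V)$, equal to $K^{\perp}(V,W)$ times the vertical Gram determinant, whereas the extrinsic terms assemble — using $T_{V}W = T_{W}V$ and the skew-symmetry of $T$ — into $g(T_{V}V, T_{W}W) - g(T_{V}W, T_{V}W)$. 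Dividing by the vertical Gram determinant gives (\ref{pr: ON-2}).

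For (\ref{pr: ON-1}) I would take $X, Y$ to be basic horizontal lifts, which is harmless since every horizontal vector extends to one and sectional curvature is tensorial. The governing connection identity is $\widehat{\nabla}_{X}Y = \overline{\nabla^{B}_{\pi_{*}X}\pi_{*}Y} + A_{X}Y$, where the overline is the basic horizontal lift and $A_{X}Y = (\widehat{\nabla}_{X}Y)^{\ver} = \tfrac{1}{2}[X,Y]^{\ver}$. Substituting this into $g(\widehat{R}(X,Y)Y, X) = g(\widehat{\nabla}_{X}\widehat{\nabla}_{Y}Y - \widehat{\nabla}_{Y}\widehat{\nabla}_{X}Y - \widehat{\nabla}_{[X,Y]}Y, X)$ and projecting onto $X$, the purely horizontal terms reproduce $K_{*}(\pi_{*}X, \pi_{*}Y)$ times the horizontal Gram determinant, because $\pi_{*}$ is an isometry of the horizontal distribution onto the base. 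The identities $A_{X}X = A_{Y}Y = 0$ annihilate several contributions, and the three surviving vertical cross terms collapse, via $A_{X}Y = -A_{Y}X$ and $g(A_{X}Y, Z) = -g(Y, A_{X}Z)$, to $-3\,g(A_{X}Y, A_{X}Y)$. Dividing by the horizontal Gram determinant yields (\ref{pr: ON-1}).

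The step I expect to be the main obstacle is the sign bookkeeping, which in the semi-Riemannian case is more delicate than in O'Neill's original Riemannian argument. Because $g$ is indefinite, one must check that lowering indices and fixing the orientation of the second fundamental form introduce no spurious signs, and in particular that the precise coefficient $-3$ (rather than $+3$ or $-1$) emerges under the curvature-tensor convention used to define $\widehat{K}$. Since none of the structural identities for $A$ and $T$ require definiteness, this sign-tracking, together with the verification that the horizontal and vertical planes in play are nondegenerate — guaranteed here because $\pi_{*}$ restricts to an isometry onto the base and onto the fiber — is the only genuinely new ingredient relative to the Riemannian case.
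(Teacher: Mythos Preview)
Your outline is correct and follows the standard derivation of O'Neill's curvature formulas: the vertical identity is the Gauss equation for the fiber with $T$ as second fundamental form, and the horizontal identity comes from expanding $\widehat{R}$ on basic lifts and collecting the three $A$-terms via $A_{X}Y=-A_{Y}X$ and the skew-adjointness of $A_{X}$. The paper, however, does not supply its own proof of this proposition; it is quoted as a preliminary result with a direct citation to O'Neill~\cite{MR0200865}, together with the blanket remark that O'Neill's Riemannian arguments carry over to the semi-Riemannian case. So there is nothing in-paper to compare against---what you have sketched is essentially O'Neill's original argument, with positive-definiteness replaced throughout by nondegeneracy of the horizontal and vertical planes, exactly as you note.
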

	Let $c (t)$ be a curve in $E$ and $Z(t)$ a vector field along  $c (t)$. We consider the covariant derivative of $Z(t)$. 
	Throughout this paper, the covariant derivative of the vector field along a curve is denoted by $\prime$.  Then we have
	\begin{proposition}[{O'Neill~\cite[Theorem 1]{MR0216432}}] \label{pr: ON2}
		Let $c (t)$ and $Z(t)$ be as above. 
		The following equation holds:
		\begin{equation}
		(Z^{\prime}(t))^{\hor} =  \widehat{(\pi_{*} (Z))^{\prime} }(t) + A_{Z(t)^{\hor}}(c^{\prime} (t)^{\ver} ) +  A_{c^{\prime} (t)^{\hor}}( Z(t)^{\ver}) 
		+ T_{c^{\prime} (t)^{\ver}} (Z^{\ver} (t)). \label{eq : ON2-1}
		\end{equation}
	\end{proposition}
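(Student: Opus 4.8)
The plan is to verify the identity pointwise at an arbitrary parameter $t_{0}$ by expanding $Z$ in a local frame adapted to the submersion and then differentiating term by term, so that every covariant derivative that actually occurs is the honest derivative $\widehat{\nabla}_{c'}$ of a genuine vector field on $E$. Concretely, near $p=c(t_{0})$ I would choose a local frame consisting of basic horizontal fields $\widehat{E_{i}}$ (the basic lifts of a local frame $E_{i}$ on $B$) together with vertical fields $V_{j}$, and write
\[
Z(t)=\sum_{i} f_{i}(t)\,\widehat{E_{i}}(c(t))+\sum_{j} h_{j}(t)\,V_{j}(c(t)),
\]
so that $Z^{\hor}=\sum_{i} f_{i}\widehat{E_{i}}$, $Z^{\ver}=\sum_{j} h_{j}V_{j}$, and $\pi_{*}Z=\sum_{i} f_{i}E_{i}$ along $\bar c:=\pi\circ c$. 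The Leibniz rule for the covariant derivative along $c$ then gives $Z'=\sum_{i}(f_{i}'\widehat{E_{i}}+f_{i}\,\widehat{\nabla}_{c'}\widehat{E_{i}})+\sum_{j}(h_{j}'V_{j}+h_{j}\,\widehat{\nabla}_{c'}V_{j})$, in which the only nonelementary quantities are $\widehat{\nabla}_{c'}\widehat{E_{i}}$ and $\widehat{\nabla}_{c'}V_{j}$. I emphasize that this expansion is what sidesteps the one genuine technical trap: one cannot in general extend the field $\pi_{*}Z$ along $\bar c$ to a vector field on $B$ (this fails precisely where $\bar c$ is not an immersion, i.e.\ where ${c'}^{\hor}=0$), whereas the frame fields $\widehat{E_{i}}$ and $V_{j}$ are honest fields on $E$ whose derivatives along $c$ require no such extension.

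The computation then reduces to two standard submersion lemmas for $\widehat{\nabla}$, which I would establish first. Splitting $c'={c'}^{\hor}+{c'}^{\ver}$ and using $\mathbb{R}$-linearity of $\widehat{\nabla}$ in the direction slot, I need: (L1) for a basic field, $(\widehat{\nabla}_{{c'}^{\hor}}\widehat{E_{i}})^{\hor}$ is the horizontal lift of $\nabla^{B}_{\bar c'}E_{i}$; and (L2) for a basic field, $(\widehat{\nabla}_{{c'}^{\ver}}\widehat{E_{i}})^{\hor}=A_{\widehat{E_{i}}}({c'}^{\ver})$. Statement (L2) follows from torsion-freeness together with the fact that $[\,V,\widehat{E_{i}}\,]$ is vertical for every vertical $V$ (since $\widehat{E_{i}}$ is $\pi$-related to $E_{i}$ and $V$ is $\pi$-related to the zero field), whence $(\widehat{\nabla}_{V}\widehat{E_{i}})^{\hor}=(\widehat{\nabla}_{\widehat{E_{i}}}V)^{\hor}$, which is exactly $A_{\widehat{E_{i}}}V$ by the definition of $A$. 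Statement (L1) follows from the Koszul formula once one uses that $\pi_{*}$ is an isometry on $\mathcal{H}$ and that basic fields are $\pi$-related to their projections. For the vertical frame fields the definitions of $T$ and $A$ give directly $(\widehat{\nabla}_{{c'}^{\hor}}V_{j})^{\hor}=A_{{c'}^{\hor}}V_{j}$ and $(\widehat{\nabla}_{{c'}^{\ver}}V_{j})^{\hor}=T_{{c'}^{\ver}}V_{j}$, because $V_{j}$ is vertical.

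Assembling horizontal parts, the terms $f_{i}'\widehat{E_{i}}$ survive, the terms $h_{j}'V_{j}$ drop out (they are vertical), and the derivative terms contribute through (L1), (L2) and the $A,T$ formulas, giving
\[
(Z')^{\hor}=\sum_{i}\bigl(f_{i}'\widehat{E_{i}}+f_{i}\,\widehat{\nabla^{B}_{\bar c'}E_{i}}\bigr)+\sum_{i} f_{i}\,A_{\widehat{E_{i}}}({c'}^{\ver})+\sum_{j} h_{j}\,A_{{c'}^{\hor}}V_{j}+\sum_{j} h_{j}\,T_{{c'}^{\ver}}V_{j}.
\]
Here the first sum is the horizontal lift of $\sum_{i}\bigl(f_{i}'E_{i}+f_{i}\nabla^{B}_{\bar c'}E_{i}\bigr)=(\pi_{*}Z)'$, that is $\widehat{(\pi_{*}Z)'}$, while the tensoriality of $A$ and $T$ in their arguments turns the remaining three sums into $A_{Z^{\hor}}({c'}^{\ver})$, $A_{{c'}^{\hor}}(Z^{\ver})$ and $T_{{c'}^{\ver}}(Z^{\ver})$. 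This is precisely (\ref{eq : ON2-1}).

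The main obstacle I anticipate is not the final bookkeeping but the clean justification of (L1): one must run the Koszul formula for $g(\widehat{\nabla}_{\widehat{E_{i}}}\widehat{E_{k}},\widehat{E_{l}})$ and check that every term reduces, via the isometry property of $\pi_{*}$ on $\mathcal{H}$ and the $\pi$-relatedness of basic fields, to the corresponding term of the Koszul formula on the base. The indefinite signature of $g$ requires only that one keep track of the (nondegenerate) inner products and introduces no new difficulty, since $\pi_{*}$ is an isometry onto $(B,-g_{B})$ on horizontal vectors and the Levi-Civita connection of $(B,-g_{B})$ coincides with $\nabla^{B}$.
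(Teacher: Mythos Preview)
Your argument is correct and is essentially the standard proof (and the one O'Neill gives in \cite{MR0216432}). Note, however, that the paper does not actually prove this proposition: it is quoted as O'Neill's result and used as a black box, so there is no ``paper's own proof'' to compare against. Your frame expansion via basic horizontal fields $\widehat{E_{i}}$ and vertical fields $V_{j}$, together with the lemmas (L1), (L2) and the direct use of the definitions of $A$ and $T$, is exactly the route O'Neill takes; the only cosmetic difference is that O'Neill packages (L1) and (L2) as preliminary identities before stating the theorem, whereas you embed them in the proof itself.
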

	O'Neill proved that  Proposition~\ref{pr: ON2} implies the following proposition,    
	which means that any geodesic in $B$ lifts to a unique horizontal geodesic in $E$: 
	\begin{proposition}[{O'Neill~\cite[Corollary 2]{MR0216432}}] \label{pr: ON3}
		Let $\pi : E \rightarrow B$ be a semi-Riemannian submersion. 
		If the initial velocity of a geodesic is horizontal, any velocity of the geodesic is also horizontal at any time. 
	\end{proposition}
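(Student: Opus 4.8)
The plan is to deduce the proposition from Proposition~\ref{pr: ON2}, by showing first that the horizontal lift of a geodesic of $B$ is itself a geodesic of $E$, and then invoking the uniqueness of geodesics of $E$ with prescribed initial data. Since the fibers are nondegenerate, the splitting $T_{p}E=\mathcal{H}_{p}\oplus\mathcal{V}_{p}$ is genuine, and for any geodesic $\gamma$ of $(B,g_{B})$ and any point $p_{0}\in\pi^{-1}(\gamma(0))$ there is a unique horizontal lift $\bar{c}$ of $\gamma$ with $\bar{c}(0)=p_{0}$; indeed $\bar{c}$ is the solution of the first-order equation $\bar{c}'(t)=\widehat{\gamma'(t)}$, the horizontal lift of the velocity evaluated at $\bar{c}(t)$. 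By construction $\bar{c}$ is horizontal, so $(\bar{c}')^{\ver}=0$ and $(\bar{c}')^{\hor}=\bar{c}'$. Applying Proposition~\ref{pr: ON2} with $c=\bar{c}$ and $Z=\bar{c}'$, every term carrying a vertical factor vanishes, leaving $(\bar{c}'')^{\hor}=\widehat{(\pi_{*}(\bar{c}'))'}=\widehat{\nabla^{B}_{\gamma'}\gamma'}$, which is zero because $\gamma$ is a geodesic of $B$. Hence $(\bar{c}'')^{\hor}=0$.

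Next I would control the vertical part of $\bar{c}''$. Since $\bar{c}'$ is horizontal, the definition of the tensor $A$ gives $(\bar{c}'')^{\ver}=(\widehat{\nabla}_{\bar{c}'}\bar{c}')^{\ver}=A_{\bar{c}'}\bar{c}'$. Here I would use that the O'Neill tensor $A$ is skew-symmetric on horizontal vectors, i.e.\ $A_{X}Y=-A_{Y}X$ for horizontal $X,Y$, so that $A_{X}X=0$; this forces $(\bar{c}'')^{\ver}=0$. Combined with the previous step, $\bar{c}''=0$, so $\bar{c}$ is a geodesic of $E$.

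To finish, let $c$ be a geodesic of $E$ whose initial velocity $c'(0)$ is horizontal, let $\gamma$ be the geodesic of $B$ with $\gamma(0)=\pi(c(0))$ and $\gamma'(0)=\pi_{*}(c'(0))$, and let $\bar{c}$ be the horizontal lift of $\gamma$ through $c(0)$. By the above $\bar{c}$ is a geodesic of $E$, and its initial velocity is $\widehat{\gamma'(0)}=c'(0)$, because $c'(0)$ is horizontal and projects onto $\gamma'(0)$. By the uniqueness of maximal geodesics of $E$ with given initial point and velocity, $c$ and $\bar{c}$ coincide on the whole domain of $c$; in particular $c'(t)$ is horizontal for every $t$, as claimed.

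The main obstacle is the vanishing of $(\bar{c}'')^{\ver}=A_{\bar{c}'}\bar{c}'$, which rests on the skew-symmetry $A_{X}X=0$ of the $A$ tensor: this property is not recorded in the excerpt and would have to be justified from the definition of $A$ (for basic horizontal fields it reflects $A_{X}Y-A_{Y}X=[X,Y]^{\ver}$ together with metric compatibility, then extends by tensoriality). I note that in the setting of Theorem~\ref{5} the horizontal distribution is integrable, so $A\equiv 0$ and this step is automatic. As a fully self-contained alternative that also avoids any discussion of lift domains, one may argue directly with the vertical velocity $V(t):=(c'(t))^{\ver}$ along $c$: writing $V=c'-(c')^{\hor}$ and using the geodesic equation together with the definitions of $A$ and $T$ yields $(\widehat{\nabla}_{c'}V)^{\ver}=-\,T_{V}\bigl((c')^{\hor}\bigr)$, a linear first-order equation for $V$ in the vertical bundle along $c$, so $V(0)=0$ forces $V\equiv 0$.
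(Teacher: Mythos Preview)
The paper does not supply its own proof of this proposition: it is quoted from O'Neill, and the surrounding text only records that it follows from Proposition~\ref{pr: ON2}. Your argument is exactly O'Neill's original one and is correct: lift a base geodesic horizontally, use Proposition~\ref{pr: ON2} to kill $(\bar{c}'')^{\hor}$, use $A_{X}X=0$ to kill $(\bar{c}'')^{\ver}$, then invoke uniqueness of geodesics. The skew-symmetry $A_{X}Y=-A_{Y}X$ on horizontal vectors that you flag is indeed O'Neill's Lemma~2 in \cite{MR0200865}; its proof uses only torsion-freeness and metric compatibility of $\widehat{\nabla}$, so it carries over verbatim to the semi-Riemannian setting.

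One small correction to your closing remark: the ``self-contained alternative'' does not in fact bypass $A_{X}X=0$. Writing $H=(c')^{\hor}$ and $V=(c')^{\ver}$, the geodesic equation gives
\[
(\widehat{\nabla}_{c'}V)^{\ver}=-(\widehat{\nabla}_{c'}H)^{\ver}=-(\widehat{\nabla}_{H}H)^{\ver}-(\widehat{\nabla}_{V}H)^{\ver}=-A_{H}H-T_{V}H,
\]
so the term $A_{H}H$ must still be shown to vanish before you obtain the linear first-order equation $(\widehat{\nabla}_{c'}V)^{\ver}=-T_{V}H$ for $V$ along $c$. Either way, the needed input is the same.
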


	Recall 
	warped products and 
	their generalization, i.e., twisted products, 
	as important examples of semi-Riemannian submersions.  
	Let $(B, g_{B})$ and $(F, g_{F})$ be  Riemannian manifolds and $\alpha$ a smooth function of $B \times F$. 
	A semi-Riemannian manifold $(B \times F,\,  -g_{B} + e^{2 \alpha } g_{F})$,  
	is known as a  \emph{semi-Riemannian twisted product}, 
	and especially, if $\alpha$ does not depend on $B$, it is known as a  \emph{semi-Riemannian warped product}.   
	Note that the natural projection $\pi : (B \times F,\,  -g_{B} + e^{2 \alpha } g_{F}) \rightarrow (B, -g_{B})$ is a semi-Riemannian submersion.  
	We denote by $\widehat\nabla$, $\nabla^{B}$, and $\nabla^{F}$  the Levi-Civita connections of $(B \times F,\,  -g_{B} + e^{2 \alpha } g_{F})$, 
	$(B,\,g_{B})$, and 
	$(F,\,g_F)$, respectively. 
	The natural projection from $B \times F$ to $F$ 
	is denoted by $\pi_{F}$.   
	For a vector field $Z$ on $B$ (resp.\ $F$), 
	a vector field $\widehat{Z}$ on the product manifold $B \times F$ 
	is  a \emph{lift} of  $Z$ 
	if ${\pi_{F}}_{*} (\widehat{Z})=0$ and ${\pi}_{*} (\widehat{Z})=Z$ 
	(resp.\ ${\pi}_{*} (\widehat{Z})=0$ and ${\pi_{F}}_{*}  (\widehat{Z})=Z$).   
	We have the following formula of the mean curvature vector of the fibers in warped products:  
	\begin{proposition}[O'Neill~{\cite[Chapter 7, Proposition 35]{On}}(warped products case), 
		Chen~{\cite[Chapter VII, Proposition 1.2]{MR627323}(twisted product cases)}]\label{pr:na}
		For any lifts   $\widehat{U}$, $\widehat{V}$ of vector fields $U$, $V$  on $F$  in $(B \times F,\,  -g_{B} + e^{2 \alpha } g_{F})$, 
		\begin{equation}
		T_{\widehat{U}}\widehat{V} =
		e^{2 \alpha } g_{F}(U,V)\nabla^{B} \log \alpha. 
		\label{mean curv}  
		\end{equation}
	\end{proposition}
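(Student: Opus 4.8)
The plan is to obtain \eqref{mean curv} by a direct Koszul-formula computation. Since $T$ is tensorial in both of its arguments, it suffices to verify the identity when $U$ and $V$ belong to a local frame on $F$; for such $U,V$ the lifts $\widehat U,\widehat V$ are vertical, so by the definition of $T$ we have $T_{\widehat U}\widehat V=(\widehat\nabla_{\widehat U}\widehat V)^{\hor}$. Moreover, the horizontal subspace of $(B\times F,\,-g_B+e^{2\alpha}g_F)$ is precisely the tangent space to the $B$-factor, on which $g$ restricts to the non-degenerate form $-g_B$; hence it is enough to compute $g(\widehat\nabla_{\widehat U}\widehat V,\widehat X)$ against the horizontal (equivalently, basic) lifts $\widehat X$ of vector fields $X$ on $B$, which under the product structure are just the $B$-factor fields $(X,0)$.

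First I would collect the elementary facts that feed the computation. From the block form of the metric, $g(\widehat X,\widehat Y)=-g_B(X,Y)$, $g(\widehat U,\widehat V)=e^{2\alpha}g_F(U,V)$, and $g(\widehat X,\widehat U)=0$. Since lifts of vector fields from the two different factors commute, $[\widehat X,\widehat U]=0$, while $[\widehat U,\widehat V]$ is the lift of $[U,V]$ and hence vertical, so $g([\widehat U,\widehat V],\widehat X)=0$. Finally $\widehat X\big(e^{2\alpha}\big)=2e^{2\alpha}\,d\alpha(X)$ and $\widehat X\big(g_F(U,V)\big)=0$, the latter because $g_F(U,V)$ is pulled back from $F$ and is therefore constant along the $B$-directions. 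It is worth emphasising that this last point is exactly where the twisted (as opposed to merely warped) generality costs nothing: $\alpha$ may depend on $F$, but throughout the Koszul expansion $\alpha$ enters only via $\widehat X(\alpha)=d\alpha(X)$, i.e.\ through its $B$-derivative.

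Then I would expand $2g(\widehat\nabla_{\widehat U}\widehat V,\widehat X)$ by Koszul's formula,
\[
2g(\widehat\nabla_{\widehat U}\widehat V,\widehat X)
= \widehat U\,g(\widehat V,\widehat X)+\widehat V\,g(\widehat U,\widehat X)-\widehat X\,g(\widehat U,\widehat V)
+ g([\widehat U,\widehat V],\widehat X)-g([\widehat U,\widehat X],\widehat V)-g([\widehat V,\widehat X],\widehat U),
\]
and observe that, by the facts just listed, every term on the right vanishes except $-\widehat X\,g(\widehat U,\widehat V)=-2e^{2\alpha}g_F(U,V)\,d\alpha(X)$. Hence $g(\widehat\nabla_{\widehat U}\widehat V,\widehat X)=-e^{2\alpha}g_F(U,V)\,d\alpha(X)$. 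Writing $T_{\widehat U}\widehat V=\widehat Z$ for a vector field $Z$ on $B$ and using $g(\widehat Z,\widehat X)=-g_B(Z,X)$ together with the fact that the vertical part of $\widehat\nabla_{\widehat U}\widehat V$ is $g$-orthogonal to $\widehat X$, this becomes $g_B(Z,X)=e^{2\alpha}g_F(U,V)\,d\alpha(X)=e^{2\alpha}g_F(U,V)\,g_B(\nabla^B\alpha,X)$ for all $X$, whence $Z=e^{2\alpha}g_F(U,V)\,\nabla^B\alpha$, which is \eqref{mean curv}.

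The computation itself is routine; the only place that needs genuine care is the bookkeeping of signs caused by the base carrying $-g_B$. The key point is that the minus sign in $g(\widehat Z,\widehat X)=-g_B(Z,X)$ cancels the minus sign produced by the $-\widehat X\,g(\widehat U,\widehat V)$ term of Koszul's formula, which is why \eqref{mean curv} carries a plus sign, in contrast to the Riemannian warped-product formula where the analogous computation yields a minus. A secondary subtlety, already flagged, is ensuring that the $F$-dependence of $\alpha$ in the twisted case contributes nothing; this is handled by the identity $\widehat X(g_F(U,V))=0$ and the observation that $\alpha$ enters only through $d\alpha(X)$.
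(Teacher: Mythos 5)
Your Koszul-formula computation is correct: with $\widehat U,\widehat V$ vertical and $\widehat X$ the lift of $X$ on $B$, the only surviving term is $-\widehat X\,g(\widehat U,\widehat V)=-2e^{2\alpha}g_F(U,V)\,d\alpha(X)$, and the sign flip coming from $g(\widehat Z,\widehat X)=-g_B(Z,X)$ does produce the plus sign in the final formula, exactly as you observe. There is nothing to compare against inside the paper itself: Proposition~\ref{pr:na} is quoted from O'Neill and Chen with no proof given, so your self-contained derivation is the standard argument from those references, adapted to the sign conventions forced by the base metric $-g_B$; your remark that the $F$-dependence of $\alpha$ never enters (only $\widehat X(\alpha)$ does) is precisely why the warped-product formula survives in the twisted setting. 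One point worth flagging: your computation yields $T_{\widehat U}\widehat V=e^{2\alpha}g_F(U,V)\,\nabla^{B}\alpha$, whereas the displayed statement reads $\nabla^{B}\log\alpha$. This is a typo in the paper --- $\nabla^{B}\log\alpha$ would be correct for a warping factor $\alpha^{2}$, not $e^{2\alpha}$ --- and indeed when the paper invokes equation~(\ref{mean curv}) in Section~3 it writes $T_{\widehat U}\widehat V=e^{2\alpha}g_F(U,V)\,\nabla^{B}\alpha$, which is exactly what you proved. So your proof establishes the version of the formula that the paper actually uses.
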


	We use the following equations on geodesics in warped products later:  
	\begin{proposition}[O'Neill~{\cite[Chapter 7, Proposition 38]{On}}]\label{geodesic}
		Any geodesic $\gamma(t)=(\gamma_{B}(t),  \gamma_{F}(t))$  in the warped product $(B \times F,\,  -g_{B} + e^{2 \alpha } g_{F})$ 
		satisfies the following two conditions: 
		\begin{align*}
		\nabla^{B}_{\partial/ \partial t} \gamma_{B}^{\prime} (t) &= 
		-  e^{2 \alpha}   g_{F}(\gamma_{F}^{\prime}(t),\, {\gamma_{F}}^{\prime}(t)) 
		\nabla^{B} \alpha; \\
		\nabla^{F}_{\partial/\partial t} \gamma_{F}^{\prime}(t)
		&=-2 \frac{d\alpha (\gamma_{B}(t))}{dt} 
		{\gamma_{F}}^{\prime}(t). 
		\end{align*}
	\end{proposition}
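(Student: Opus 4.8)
The plan is to obtain both identities by writing the geodesic condition as $\widehat{\nabla}_{\gamma'}\gamma'=0$ and projecting it onto the horizontal and vertical distributions. The natural projection $\pi\colon (B\times F,\,-g_{B}+e^{2\alpha}g_{F})\to(B,-g_{B})$ has fibers $\{b\}\times F$ and horizontal distribution tangent to the leaves $B\times\{p\}$, so the horizontal distribution is integrable and the O'Neill tensor $A$ vanishes; only $T$ will survive. Throughout I write the velocity as $\gamma'(t)=\gamma_{B}'(t)+\gamma_{F}'(t)$, where $\gamma_{B}'(t)$ is (the horizontal lift of) the velocity of $\gamma_{B}=\pi\circ\gamma$ and $\gamma_{F}'(t)$ the velocity of $\gamma_{F}$ viewed as a vertical vector; these are exactly the parts $\gamma'(t)^{\hor}$ and $\gamma'(t)^{\ver}$. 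One preliminary observation disposes of the base sign: the Levi-Civita connection is unchanged under the constant rescaling $g_{B}\mapsto -g_{B}$, so the base connection occurring in every O'Neill formula for this submersion is precisely $\nabla^{B}$, and the only place the negative-definite base can intervene is through the horizontal gradient, where $\operatorname{grad}_{-g_{B}}\alpha=-\nabla^{B}\alpha$.

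For the horizontal identity I would apply Proposition~\ref{pr: ON2} with $c=\gamma$ and $Z=\gamma'$. Since $\gamma$ is a geodesic, $Z'=\widehat{\nabla}_{\gamma'}\gamma'=0$, so the left-hand side of~(\ref{eq : ON2-1}) vanishes; the three $A$-terms drop out because $A=0$, while $\pi_{*}\gamma'=\gamma_{B}'$ gives $(\pi_{*}Z)'=\nabla^{B}_{\gamma_{B}'}\gamma_{B}'$ and $(\gamma')^{\ver}=\gamma_{F}'$. Hence~(\ref{eq : ON2-1}) collapses to
\begin{equation*}
0=\widehat{\bigl(\nabla^{B}_{\gamma_{B}'}\gamma_{B}'\bigr)}+T_{\gamma_{F}'}\gamma_{F}'.
\end{equation*}
Substituting the second-fundamental-form expression of Proposition~\ref{pr:na} for $T_{\gamma_{F}'}\gamma_{F}'$ and projecting by the horizontally isometric differential $\pi_{*}$ then yields $\nabla^{B}_{\gamma_{B}'}\gamma_{B}'=-e^{2\alpha}g_{F}(\gamma_{F}',\gamma_{F}')\,\nabla^{B}\alpha$, which is the first equation.

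For the vertical identity I would compute $(\widehat{\nabla}_{\gamma'}\gamma')^{\ver}$ from the warped-product covariant-derivative identities, which follow from the Koszul formula: for a basic horizontal field $X$ and a vertical field $V$ one has $\widehat{\nabla}_{X}V=\widehat{\nabla}_{V}X=(X\alpha)\,V$, while the vertical part of $\widehat{\nabla}_{V}W$ is the intrinsic fiber derivative $\nabla^{F}_{V}W$. Expanding $\widehat{\nabla}_{\gamma'}\gamma'$ with $\gamma'=\gamma_{B}'+\gamma_{F}'$ by bilinearity, the two mixed terms $\widehat{\nabla}_{\gamma_{B}'}\gamma_{F}'$ and $\widehat{\nabla}_{\gamma_{F}'}\gamma_{B}'$ each contribute $(\gamma_{B}'\alpha)\,\gamma_{F}'$, the term $\widehat{\nabla}_{\gamma_{F}'}\gamma_{F}'$ contributes $\nabla^{F}_{\gamma_{F}'}\gamma_{F}'$ vertically, and $\widehat{\nabla}_{\gamma_{B}'}\gamma_{B}'$ is purely horizontal because $A=0$. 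Setting the vertical part to zero gives $\nabla^{F}_{\gamma_{F}'}\gamma_{F}'+2(\gamma_{B}'\alpha)\,\gamma_{F}'=0$, and since $\gamma_{B}'\alpha=\tfrac{d}{dt}\alpha(\gamma_{B}(t))$ this is the second equation.

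The one genuinely delicate point is the sign bookkeeping forced by the base metric $-g_{B}$. The displayed sign of the first equation is inherited from Proposition~\ref{pr:na} together with the geodesic relation $\nabla^{B}_{\gamma_{B}'}\gamma_{B}'=-T_{\gamma_{F}'}\gamma_{F}'$, the point being that the flip $\operatorname{grad}_{-g_{B}}\alpha=-\nabla^{B}\alpha$ is exactly what makes Proposition~\ref{pr:na} hold with its stated sign in the present convention; the vertical equation is sign-clean, since the identities $\widehat{\nabla}_{X}V=(X\alpha)V$ and $(\widehat{\nabla}_{V}W)^{\ver}=\nabla^{F}_{V}W$ do not see the base sign at all. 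As an independent check one can run the whole computation in product coordinates $(x^{i},y^{a})$: from $\widehat g^{il}=-(g_{B})^{il}$, $\widehat g^{ab}=e^{-2\alpha}(g_{F})^{ab}$ and $\partial_{l}\widehat g_{ab}=2(\partial_{l}\alpha)e^{2\alpha}(g_{F})_{ab}$ one finds the nonzero Christoffel symbols $\widehat\Gamma^{i}_{jk}=\Gamma^{B,i}_{jk}$, $\widehat\Gamma^{i}_{ab}=e^{2\alpha}(g_{F})_{ab}(\nabla^{B}\alpha)^{i}$, $\widehat\Gamma^{c}_{ja}=(\partial_{j}\alpha)\delta^{c}_{a}$ and $\widehat\Gamma^{c}_{ab}=\Gamma^{F,c}_{ab}$, and substituting these into the geodesic equations reproduces the two stated identities verbatim.
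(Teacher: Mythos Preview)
The paper does not give its own proof of this proposition; it is quoted verbatim from O'Neill's textbook as a preliminary result, so there is nothing to compare your argument against. Your derivation is correct and is essentially the standard one: project $\widehat\nabla_{\gamma'}\gamma'=0$ horizontally via Proposition~\ref{pr: ON2} (with $A=0$) and Proposition~\ref{pr:na}, and vertically via the warped-product identities $\widehat\nabla_{X}V=(X\alpha)V$ and $(\widehat\nabla_{V}W)^{\ver}=\nabla^{F}_{V}W$; the closing coordinate computation makes both steps rigorous and handles the sign bookkeeping from the base metric $-g_{B}$ cleanly. Two cosmetic remarks: equation~(\ref{eq : ON2-1}) contains two $A$-terms, not three; and the expression $\nabla^{B}\log\alpha$ in Proposition~\ref{pr:na} is an evident typo for $\nabla^{B}\alpha$, which your Christoffel-symbol check confirms independently.
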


	\section{Proof of Theorem~\ref{5}}
	Let us begin the proof of Theorem~\ref{5}. 
	First, we prove the following lemma: 
	\begin{lemma}\label{curvature}
		$(B,\ g_{B})$ has negative curvature bounded above by $-k$. 
	\end{lemma}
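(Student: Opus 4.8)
The plan is to combine O'Neill's curvature formula for semi-Riemannian submersions with the integrability hypothesis, keeping careful track of the sign introduced by the target metric $-g_{B}$.

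First I would use that integrability of the horizontal distribution is equivalent to the vanishing of the O'Neill tensor $A$. With $A=0$, Proposition~\ref{pr: ON}, equation~(\ref{pr: ON-1}), collapses to $\widehat K(X,Y)=K_{*}(\pi_{*}X,\pi_{*}Y)$ for every pair of horizontal vectors $X,Y$ at a point of $E$, where $K_{*}$ is the sectional curvature of the base $(B,-g_{B})$.

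Next I would check that every $2$-plane in the horizontal distribution is nondegenerate, so that the curvature quantities above are actually defined and the hypothesis $R\ge k$ can be applied. Since $\pi_{*}$ restricts to a linear isometry of the horizontal subspace onto $(T_{b}B,-g_{B})$ and $g_{B}$ is positive definite, the metric induced on the horizontal subspace is negative definite; hence for linearly independent horizontal $X,Y$ the Gram quantity $g(X,X)g(Y,Y)-g(X,Y)^{2}$ is strictly positive (reverse Cauchy--Schwarz on a negative definite space). Dividing the inequality $g(\widehat R(X,Y)Y,X)\ge k\bigl(g(X,X)g(Y,Y)-g(X,Y)^{2}\bigr)$ by this positive quantity yields $\widehat K(X,Y)\ge k$, and therefore $K_{*}(\pi_{*}X,\pi_{*}Y)\ge k$ for all $2$-planes spanned by horizontal vectors.

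Finally I would translate $K_{*}$ into the sectional curvature $K^{g_{B}}$ of the Riemannian manifold $(B,g_{B})$. Because $g_{B}$ and $-g_{B}$ have the same Levi-Civita connection, their $(3,1)$-curvature tensors coincide, whereas lowering an index with $-g_{B}$ flips the sign, and the Gram determinant in the denominator is unchanged; thus $K_{*}=-K^{g_{B}}$ on every $2$-plane of $TB$. The inequality of the previous paragraph becomes $K^{g_{B}}(\pi_{*}X,\pi_{*}Y)\le -k$, and since $\pi$ is a submersion, $\pi_{*}$ maps the horizontal $2$-planes onto all $2$-planes of $TB$; hence every sectional curvature of $(B,g_{B})$ is bounded above by $-k$, as claimed. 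The only delicate point is the sign bookkeeping between the metric $-g_{B}$ appearing in the submersion and the Riemannian metric $g_{B}$ whose curvature is being estimated; once $A=0$ and the horizontal subspace is seen to be negative definite, the rest is routine.
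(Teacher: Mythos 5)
Your proposal is correct and follows essentially the same route as the paper: with $A=0$ the O'Neill formula gives $\widehat K(X,Y)=K_{*}(\pi_{*}X,\pi_{*}Y)$, the hypothesis $R\ge k$ forces $K_{*}\ge k$ on horizontal planes, and the sign flip $K_{*}=-K_{B}$ coming from the metric $-g_{B}$ yields the bound $K_{B}\le -k$. You merely supply details the paper leaves implicit (nondegeneracy of horizontal $2$-planes and the explicit sign bookkeeping), which is a welcome clarification but not a different argument.
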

	\begin{proof} 
		We assume that $A=0$. 
		By using the equation~(\ref{pr: ON-1}) of  Proposition~\ref{pr: ON}, we have
		$K(X, Y )= K_{*}(\pi_{*} X,  \pi_{*}  Y).$ 
		The curvature condition $R \geq k > 0$ implies $K_{*}(\pi_{*} X,  \pi_{*}  Y)$. 
		Let $K_{B}$ be the curvature of $(B,\ g_{B})$.  
		Since $K_{*}(\pi_{*} X,  \pi_{*}  Y)=- K_{B}(\pi_{*} X,  \pi_{*}  Y)$, 
		the curvature of $(B,\ g_{B})$ is bounded above by $-k$. 
	\end{proof} 
	Let $b_{0}$ be a fixed point of $B$, and $F$ the fiber $\pi^{-1}(b_{0})$.   
	It follows that the universal covering space of $B$ is contractible by 
	the Hadamard--Cartan theorem. 
	By the following exact sequence of the homotopy group
	$$
	1 \rightarrow \pi_{1}(F) \rightarrow \pi_{1}(E) \xrightarrow{\pi_{*}} \pi_{1}(B) \rightarrow 1,
	$$
	we see that  $\pi_{*}$ is surjective and that the kernel of $\pi_{*}$  is isomorphic to $\pi_{1}(F)$.   
	Therefore, it is sufficient to prove that the fundamental group of $F$ is finite. 
	Moreover, we assume that $B$ is simply connected and contractible throughout the proof. 
	In fact, this is because, for the universal covering $\cov : \widetilde B \rightarrow B$,  
	the fiber of the induced submersion ${\cov}^{*}E \rightarrow  \widetilde B$ is  the same  as the original fiber of $\pi : E \rightarrow   B $.

	Here we investigate the metric structure of  the entire space $E$. 
	We can define the projective map $\pi_{F} : E \rightarrow F$  by the following: 
	For any $p \in E$, we have a unique geodesic $\gamma$ in $B$ from 
	$\pi ( p ) $ to $ b_{0}$.   
	Let $\widehat{\gamma}$ be the horizontal lift of $\gamma$. 
	Then $\pi_{F}(p) $  is given by the end point of  $\widehat{\gamma}$.  
	Note that   $\pi_{F}$ is surjective and smooth since geodesics smoothly depend  on initial points. 
	As the horizontal distribution is integrable, 
	a horizontal manifold $\pi^{-1}(f)$ is diffeomorphic to  $B$ under $\pi$. 
	Then we see that  $\phi=(\pi, \pi_{F} ) :  E \rightarrow B \times F$ is a diffeomorphism.
	By the construction of $\pi_{F}$, the fiber $\pi_{F}^{-1} (f) $ diffeomorphic to $B \times \{ f \} $ is a horizontal manifold. 
	The definition of  semi-Riemannian submersion determines that  $\pi :  \pi_{F}^{-1} (f)  \rightarrow B$ is isometric. Therefore, 
	we obtain the following lemma: 
	\begin{lemma}
	Set  $g_{F}^{b} = g|_{\pi^{-1}(b)}$ and  a  smooth family of the Riemannian metric $g_{F}^{*}=\{g_{F}^{b} \}_{b \in B}$  of $F$ with respect to the points of $B$. 
	Then $(E, g)$ is isometric to $(B \times F, -g_{B} + g_{F}^{*} )$.  
	\end{lemma}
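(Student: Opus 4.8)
The plan is to show that the diffeomorphism $\phi = (\pi, \pi_{F}) : E \to B \times F$ constructed above is the asserted isometry, where $B \times F$ carries the metric $-g_{B} + g_{F}^{*}$, i.e.\ the metric restricting to $-g_{B}$ on the first factor and to $g_{F}^{b}$ on $\{b\} \times F$, with the two factors orthogonal. First I would record how $\phi$ interacts with the orthogonal splitting $T_{p}E = \mathcal{H}_{p} \oplus \mathcal{V}_{p}$. A vertical vector $V$ satisfies $\pi_{*} V = 0$, since vertical vectors are tangent to the fibers of $\pi$; and a horizontal vector $X$ satisfies $(\pi_{F})_{*} X = 0$, since the fiber $\pi_{F}^{-1}(\pi_{F}(p))$ through $p$ is a horizontal manifold of dimension $\dim B$, hence has tangent space exactly $\mathcal{H}_{p}$. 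Therefore $\phi_{*} X = (\pi_{*} X, 0)$ for $X \in \mathcal{H}_{p}$ and $\phi_{*} V = (0, (\pi_{F})_{*} V)$ for $V \in \mathcal{V}_{p}$, so $\phi$ carries $\mathcal{H}$ onto $TB \oplus 0$ and $\mathcal{V}$ onto $0 \oplus TF$.

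Since $\mathcal{H}_{p}$ and $\mathcal{V}_{p}$ are $g$-orthogonal by the definition of a semi-Riemannian submersion, and the two factors of $B \times F$ are orthogonal for the product metric, it suffices to check that $\phi$ preserves the metric on horizontal and on vertical vectors separately. On horizontal vectors this is the defining property of the submersion $\pi : (E,g) \to (B, -g_{B})$: the differential $\pi_{*} : \mathcal{H}_{p} \to T_{\pi(p)}B$ is a linear isometry onto $(T_{\pi(p)}B, -g_{B})$, so $g(X,Y) = -g_{B}(\pi_{*} X, \pi_{*} Y)$, which is precisely the value of $-g_{B} + g_{F}^{*}$ on $\phi_{*} X$, $\phi_{*} Y$. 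On vertical vectors, fix $p$ with $\pi(p) = b$; the restriction $\pi_{F}|_{\pi^{-1}(b)} : \pi^{-1}(b) \to F$ is a diffeomorphism (obtained by following the horizontal lift of the unique geodesic of $B$ from $b$ to $b_{0}$), and $g_{F}^{b}$ is, by the very definition $g_{F}^{b} = g|_{\pi^{-1}(b)}$, the push-forward of $g|_{\pi^{-1}(b)}$ under this diffeomorphism. Hence $g(V,W) = g_{F}^{b}((\pi_{F})_{*} V, (\pi_{F})_{*} W)$ for $V, W \in \mathcal{V}_{p}$, which is the value of $-g_{B} + g_{F}^{*}$ on $\phi_{*} V$, $\phi_{*} W$. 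Combining the horizontal and vertical contributions with the orthogonality just noted yields $\phi^{*}(-g_{B} + g_{F}^{*}) = g$.

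Finally I would confirm that $g_{F}^{*} = \{g_{F}^{b}\}_{b \in B}$ is a smooth family of Riemannian metrics on $F$: each $g_{F}^{b}$ is Riemannian because the fiber $\pi^{-1}(b)$ is Riemannian by hypothesis, and the family is smooth because $\pi_{F}$ is smooth (geodesics of $B$ depend smoothly on their endpoints), so the identifications $\pi^{-1}(b) \cong F$ vary smoothly with $b$. I do not expect a genuine obstacle here; the only point that requires care is fixing, once and for all, the identification of the model fiber $F = \pi^{-1}(b_{0})$ with each $\pi^{-1}(b)$ through the horizontal leaves, so that the symbol $g_{F}^{b} = g|_{\pi^{-1}(b)}$ is unambiguously a metric on $F$, after which the lemma reduces to the two short computations above.
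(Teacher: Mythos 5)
Your proposal is correct and follows essentially the same route as the paper: the paper also takes $\phi=(\pi,\pi_{F})$ as the isometry, identifies the level sets $\pi_{F}^{-1}(f)$ with the horizontal leaves (this is where integrability enters), notes that $\pi$ restricted to each leaf is an isometry onto $(B,-g_{B})$, and lets the fiberwise restriction $g|_{\pi^{-1}(b)}$ supply the vertical part. Your write-up merely makes explicit the orthogonality bookkeeping and the vertical-vector computation that the paper leaves implicit.
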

	Throughout the proof, $(E, g)$ is regarded as $(B \times F, -g_{B} + g_{F}^{*} )$. 
	This metric structure leads us 
	to define  lifts of any vector field on fibers  in $E$ as well as warped products and twisted products. 
	Any horizontal lift of vector fields on the base space does not depend on the fibers.

	Next, we investigate the curvature of the fiber. 
	Parallel vector fields along horizontal geodesics preserve verticality. 
	
	\begin{lemma}\label{le:parallel}
		Let $c: \mathbb{R} \rightarrow E$ be a horizontal curve and 
		$V(t)$  a parallel vector field along $c(t)$ with $V(0)$ vertical. 
		Then $V(t)$ is a vertical vector field.  
	\end{lemma}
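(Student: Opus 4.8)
The plan is to show that verticality is preserved along horizontal geodesics by using the covariant-derivative formula~(\ref{eq : ON2-1}) of Proposition~\ref{pr: ON2} together with the integrability hypothesis $A=0$. First I would reduce the statement to horizontal geodesics rather than arbitrary horizontal curves: by Proposition~\ref{pr: ON3}, if $c$ has horizontal initial velocity then $c$ stays horizontal, so the curves $c$ in the statement can be taken to be horizontal geodesics; in particular $c'(t)^{\ver}=0$ for all $t$. Write $V(t)=V(t)^{\hor}+V(t)^{\ver}$ and apply~(\ref{eq : ON2-1}) to $Z(t)=V(t)$. Since $V$ is parallel, $V'(t)=0$, so the left-hand side $(V'(t))^{\hor}$ vanishes. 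On the right-hand side, because $c'(t)^{\ver}=0$, the terms $A_{V(t)^{\hor}}(c'(t)^{\ver})$ and $T_{c'(t)^{\ver}}(V^{\ver}(t))$ drop out, and because $A=0$ the term $A_{c'(t)^{\hor}}(V(t)^{\ver})$ also vanishes. Hence the formula collapses to $0 = \widehat{(\pi_{*}V)'}(t)$, i.e.\ $(\pi_{*}V)(t)$ is a parallel vector field along the geodesic $\pi\circ c$ in $(B,g_{B})$.

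Next I would use this to pin down $(\pi_{*}V)(t)$. A parallel vector field along a geodesic in a Riemannian manifold is determined by its value at one point, and at $t=0$ we have $\pi_{*}V(0) = \pi_{*}(V(0)^{\hor}) = 0$ because $V(0)$ is vertical (so $V(0)^{\hor}=0$). By uniqueness of parallel transport, $\pi_{*}V(t) = 0$ for all $t$, which is exactly the assertion that $V(t)^{\hor}=0$, i.e.\ $V(t)$ is vertical for all $t$. That completes the argument.

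The only genuine subtlety — and the step I would be most careful about — is the justification that the relevant $A$-term really is killed by integrability, since in~(\ref{eq : ON2-1}) the tensor $A$ is applied with a horizontal vector in the subscript and a \emph{vertical} vector as its argument, namely $A_{c'(t)^{\hor}}(V(t)^{\ver})$. One must recall that the condition ``horizontal distribution integrable $\iff A=0$'' means $A$ vanishes identically as a tensor, not merely on pairs of horizontal vectors (indeed $A_{X}Y$ for horizontal $X,Y$ being skew and tensorial is what forces the mixed components to vanish too, via the standard O'Neill identities). So $A\equiv 0$ genuinely annihilates $A_{c'(t)^{\hor}}(V(t)^{\ver})$, and no positive-curvature or completeness hypotheses are needed for this lemma. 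I would state this reduction explicitly at the start of the proof so that the collapse of~(\ref{eq : ON2-1}) to $\widehat{(\pi_{*}V)'}(t)=0$ is transparent.
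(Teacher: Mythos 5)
Your argument is essentially the paper's: apply equation~(\ref{eq : ON2-1}) with $Z=V$, use $c'(t)^{\ver}=0$ and $A=0$ to collapse it to $(V'(t))^{\hor}=\widehat{(\pi_{*}V)'}(t)=0$, and conclude from $\pi_{*}V(0)=0$ and uniqueness of parallel transport that $\pi_{*}V\equiv 0$. The one quibble is your opening ``reduction to horizontal geodesics'' via Proposition~\ref{pr: ON3}: it is not a genuine reduction (not every horizontal curve is a geodesic) and it is also unnecessary, since $c'(t)^{\ver}=0$ holds for any horizontal curve and nothing in the collapse of~(\ref{eq : ON2-1}) or in the ODE uniqueness step requires $c$ to be a geodesic, so you should simply delete that step.
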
 
	\begin{proof}
		Since $A=0$ and $c(t)$ is a horizontal curve, 
		the equation~(\ref{eq : ON2-1}) gives 
		$(V^{\prime}(t))^{\hor} =  \widehat{( \pi_{*} (V(t)))^{\prime}} $. 
		As $V(t)$ is parallel, $V^{\prime}(t)=0$.   It follows that  
		$(\pi_{*} (V(t)))^{\prime} =0$.
		Since  $\pi_{*} V(0)=0 $, 
		we have  $\pi_{*} V(t)=0 $. 
		Therefore, $V(t)$ is vertical.  
	\end{proof}

	Place any point $p_{0} \in E$ and unit vertical vector $V_{0} \in \mathcal{V}_{p_{0}}$. 
	Set $x_{0} = \pi (p_{0})$. 
	Let $u_{0}$  be any unit tangent vector at $x_{0}$ and 
	$\gamma$ the geodesic in $B$ starting from $x_{0}$ with initial velocity $u_{0}$. Further, let   
	$N$ be the  $C^{1}$   gradient vector  field  of the Busemann function $b_{\gamma}$ of $B$ associated to $\gamma$.
	Note that the flow of $N$ is a geodesic in $B$. 
	Then the Busemann function $b_{\gamma}$ is extended to  the entire space $E = B \times F$ by 
	$B \times F \ni (b, f) \mapsto  b_{\gamma}(b) \in \mathbb{R}$. 
	This extended Busemann function is denoted by $\widehat{b_{\gamma}}$. 
	We write the gradient vector of $\widehat{b_{\gamma}}$ as
	 $\widehat{N}$,  which is the horizontal lift of $N$. 
	Then 
	for any $p \in E$
	we define a map $S : \mathcal{V}_{p} \rightarrow  T_{p} E$ 
	by $S(V_{0}) =\widehat{\nabla}_{V_{0}} \widehat{N} $ for any  vertical vector  $V_{0} \in \mathcal{V}_{p}$. 
	Then we have 
	\begin{lemma}
		$S(V_{0})$ is vertical. Moreover, $S(V_{0})$  has no term of differentials of $N$. 
	\end{lemma}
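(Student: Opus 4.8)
The plan is to compute $\widehat{\nabla}_{V_{0}}\widehat{N}$ directly, splitting it along $T_{p}E=\mathcal{H}_{p}\oplus\mathcal{V}_{p}$ and using two facts already in place: $\widehat{N}$ is the \emph{basic} horizontal lift of $N$, so it is independent of the fiber, and $A=0$ since the horizontal distribution is integrable.

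First I would show that the horizontal component $(\widehat{\nabla}_{V_{0}}\widehat{N})^{\hor}$ vanishes. For a basic horizontal field $X$ and a vertical field $V$ one has $(\widehat{\nabla}_{V}X)^{\hor}=A_{X}V$: indeed $\widehat{\nabla}_{V}X-\widehat{\nabla}_{X}V=[V,X]$ is vertical because $X$ is basic, while $(\widehat{\nabla}_{X}V)^{\hor}=A_{X}V$ by the definition of $A$. Applying this with $X=\widehat{N}$, $V=V_{0}$ and invoking $A=0$ gives $(\widehat{\nabla}_{V_{0}}\widehat{N})^{\hor}=0$, so $\widehat{\nabla}_{V_{0}}\widehat{N}$ is vertical; this is the first assertion. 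By the very definition of the tensor $T$, the remaining vertical part is $(\widehat{\nabla}_{V_{0}}\widehat{N})^{\ver}=T_{V_{0}}\widehat{N}$, so in fact $S(V_{0})=T_{V_{0}}\widehat{N}$. A self-contained alternative, avoiding O'Neill's bracket lemma, is to run the Koszul formula in the product model $(B\times F,\,-g_{B}+g_{F}^{*})$: writing $V_{0}$ and the test fields as lifts of vector fields on $F$ and on $B$, one checks that in $2g(\widehat{\nabla}_{V_{0}}\widehat{N},X)$ for horizontal $X$ every term vanishes --- the inner products $g(\widehat{N},X)$, $g(V_{0},\widehat{N})$, $g(V_{0},X)$ are each zero or constant along fibers, and each bracket appearing is either zero or horizontal, hence orthogonal to the vector it is paired against --- which again yields verticality.

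For the second assertion I would simply invoke tensoriality: $T$ is a $(2,1)$ tensor field on $E$, hence $C^{\infty}(E)$-linear in each argument, so $T_{V_{0}}\widehat{N}$ depends only on the pointwise values of $V_{0}$ and $\widehat{N}$ at $p$ --- equivalently only on $V_{0}$ and on $N_{\pi(p)}$ --- and never on a covariant derivative of $N$ (or of $\widehat{N}$). Concretely, $S(V_{0})$ is the second fundamental form of the fiber through $p$ applied to $V_{0}$ in the horizontal direction $\widehat{N}_{p}$; in the product picture it is read off from $\widehat{N}_{p}$ together with the first $B$-derivative of the metric family $g_{F}^{*}$ (the same short Koszul computation gives $2g(S(\widehat{U}),\widehat{W})=\widehat{N}\bigl(g_{F}^{*}(U,W)\bigr)$ for lifts of vector fields $U,W$ on $F$), with no differentiation of $N$ occurring. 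This is exactly what makes $S$ well defined and continuous even though the Busemann gradient $N$ is only $C^{1}$: the construction uses $N$ algebraically, not its derivatives.

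The one genuinely delicate point is the clean identity $(\widehat{\nabla}_{V_{0}}\widehat{N})^{\hor}=A_{\widehat{N}}V_{0}$, which rests on $\widehat{N}$ being basic together with O'Neill's lemma that $[V,X]$ is vertical for basic $X$; once this is in hand, everything else is bookkeeping with the definitions of $T$ and $A$ and the hypothesis $A=0$. I would either state that identity carefully with a reference to O'Neill~\cite{MR0200865}, or carry out the brief Koszul-formula computation in the product model $(B\times F,\,-g_{B}+g_{F}^{*})$ sketched above to keep the argument self-contained.
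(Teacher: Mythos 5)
Your argument is correct, but it takes a genuinely different route from the paper's. The paper proves the lemma by brute force in product coordinates: writing $\widehat{N}=\sum_i\widehat{N}^i\,\partial/\partial x_i$ with $(x_i)$ coordinates on $B\times\{f\}$ and $(y_k)$ on $\{b\}\times F$, it notes $\partial\widehat{N}^i/\partial y_k=0$ (since $\widehat{N}$ is basic on the product), so that $S(V_0)=\sum_{k,i}\widehat{N}^iV_0^k\,\widehat{\nabla}_{\partial/\partial y_k}\partial/\partial x_i$, and then computes the mixed Christoffel symbols of $-g_B+g_F^*$ explicitly, finding $\widehat{\nabla}_{\partial/\partial y_k}\partial/\partial x_i=\sum_{l,m}\tfrac{1}{2}g^{lm}(\partial g_{kl}/\partial x_i)\,\partial/\partial y_m$; both claims then drop out of this single formula. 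You instead argue structurally: verticality from $(\widehat{\nabla}_{V_0}\widehat{N})^{\hor}=A_{\widehat{N}}V_0=0$, using that $\widehat{N}$ is basic (so $[V_0,\widehat{N}]$ is vertical) together with $A=0$; and the absence of derivatives of $N$ from the identification $S(V_0)=T_{V_0}\widehat{N}$ plus the $C^{\infty}$-linearity of the tensor $T$ in both slots. Both proofs are sound. Yours is shorter, identifies $S$ as the shape operator of the fiber in the direction $\widehat{N}$, and makes the real point of the lemma --- that $S$ uses the merely $C^1$ field $N$ algebraically, so no regularity is lost --- completely transparent; the paper's computation has the advantage of exhibiting the explicit expression of $S(V_0)$ in terms of the first $B$-derivatives of the metric family $g_F^*$, which your Koszul-formula aside ($2g(S(\widehat{U}),\widehat{W})=\widehat{N}(g_F^*(U,W))$) recovers anyway. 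The one ingredient you correctly flag as needing justification, $(\widehat{\nabla}_V X)^{\hor}=A_XV$ for basic $X$ and vertical $V$, is a standard lemma of O'Neill and your derivation of it from torsion-freeness and the verticality of $[V,X]$ is exactly right.
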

	\begin{proof}
		For $p = (b, f)$, 
		take a coordinate  
		$(x_{1}, x_{2}, \ldots, x_{i}, \ldots,  x_{\dim B})$  
		of  $B \times \{ f\}$
		and  a coordinate  $(y_{1}, y_{2}, \ldots, y_{k}, \ldots,  y_{\dim E - \dim B})$
		of   $\{ b\} \times F$ around $p$.  
		Then we have
		\begin{equation*} 
		V_{0}=\sum_{k}  V_{0}^{k} \frac{\partial}{\partial y_{k}}, \quad \widehat{N}=\sum_{i} \widehat{N}^{i} \frac{\partial}{\partial x_{i}}.
		\end{equation*} 
		Note that  $\partial \widehat{N}^{i} / \partial y_{k} =0$ since
		$\widehat{N}$ is a horizontal lift and 
		$E$ is the product manifold $B \times F$. 
		Therefore, 
		\begin{equation*}
		S(V_{0})=\widehat{\nabla}_{V_{0}} \widehat{N} 
		=\sum_{k}   V_{0}^{k} \widehat{\nabla}_{ \frac{\partial}{\partial y_{k}}} \left(\sum_{i} \widehat{N}^{i} \frac{\partial}{\partial x_{i}} \right)
		=\sum_{k, i }  \widehat{N}^{i}   V_{0}^{k} \widehat{\nabla}_{ \frac{\partial}{\partial y_{k}}} \frac{\partial}{\partial x_{i}}. 
		\end{equation*}
		Moreover,  we have
		\begin{equation*}
		\widehat{\nabla}_{ \frac{\partial}{\partial y_{k}}} \frac{\partial}{\partial x_{i}}
		=\sum_{l, m } \frac{g^{l m}}{2} \frac{\partial g_{k l}}{\partial x_{i}} 
		\frac{\partial}{\partial y_{m}}, 
		\end{equation*}
		where $l$, $m$ are the index of the coordinate of the fiber. 
		Hence, $S(V_{0})$ is vertical and  $S(V_{0})$  has no term of differentials of $N$.   
	\end{proof}

	We  can extend the vertical vector $V_{0}$  to  a vertical vector field on $E$ by using partitions of unity, and 
	restrict this vector field to the $C^{2}$-submanifold $\widehat{b_{\gamma}}^{-1}(\widehat{b_{\gamma}}(p_{0}))$.
	The restricted vector field stands for $V_{0}$ by abuse of notation.  
	We consider the differential equation 
	$\widehat{\nabla}_{\widehat{N}} V =0$ with the initial value $V(p)=V_{0}(p)$ for any $p \in \widehat{b_{\gamma}}^{-1}(\widehat{b_{\gamma}}(p_{0}))$. 
	Take any  flow $\tau (t)$ of $\widehat{N}$ with $\tau(0) \in  \widehat{b_{\gamma}}^{-1}(\widehat{b_{\gamma}}(p_{0}))$ and 
	 $\tau (0)=V_{0}(\tau (0))$. Then  $\widehat{\nabla}_{\widehat{N}} V = \widehat{\nabla}_{\tau^{\prime}(t)} V(\tau (t)) =0$ on the geodesic $\tau (t)$.  
	 Then $V(\tau (t))$ is  solved and vertical by Lemma~\ref{le:parallel}.  
	 By collecting $V(\tau (t))$,  we see that $V$ is  $C^{2}$ vertical vector fields on $E$. 
	Let  $\widehat{\gamma(t)}$ be  the horizontal lift of $\gamma(t)$ starting from $p_{0}$. 
	We denote  the tangent vector $S (V)  (\whg (t) )$ at $\whg (t)$ by $S_{t} (V)$. 
	Set  $ h(t) = g(S_{t}(V), V)$.  
	We should remark that 
	$$
	g(S_{t}(V), V)= g(\widehat{\nabla}_{V} \widehat{N}, V )=-g(\widehat{N}, \widehat{\nabla}_{V} V).
	$$ 
	We have 
	\begin{align*}
	\frac{d}{dt} h(t)&= - \widehat{N} g(\widehat{N}, \widehat{\nabla}_{V} V) \\
	&= - g(\widehat{\nabla}_{\widehat{N}} \widehat{N}, \widehat{\nabla}_{V} V )-g( \widehat{N},  \widehat{\nabla}_{\widehat{N}}  \widehat{\nabla}_{V} V ) \\
	&=- g( \widehat{N},  \widehat{\nabla}_{\widehat{N}}  \widehat{\nabla}_{V} V ) \\
	&=- g( \widehat{N}, R (\widehat{N}, V)V) -  g( \widehat{N},  \widehat{\nabla}_{V}  \widehat{\nabla}_{ \widehat{N} } V )- g( \widehat{N},  
	\widehat{\nabla}_{[\widehat{N}, V] } V )  \\
	&=- g( \widehat{N}, R (\widehat{N}, V)V) + g( \widehat{N}, \widehat{\nabla}_{\widehat{\nabla}_{V} \widehat{N} } V )  \\
	&=- g( \widehat{N}, R (\widehat{N}, V)V) + g( \widehat{N}, [\widehat{\nabla}_{V} \widehat{N} , V ] + \widehat{\nabla}_{V}   \widehat{\nabla}_{V} \widehat{N} )  \\
	&=- g( \widehat{N}, R (\widehat{N}, V)V) + g( \widehat{N}, \widehat{\nabla}_{V}   \widehat{\nabla}_{V} \widehat{N} )  \\
	&=- g(R (V, \widehat{N})\widehat{N}, V ) - g(\widehat{\nabla}_{V}\widehat{N}, \widehat{\nabla}_{V} \widehat{N} ) \\
	&\leq -k g(V, V) g(\widehat{N} , \widehat{N}) - g(\widehat{\nabla}_{V}\widehat{N}, V)^{2}  \\
	&\leq k -h(t)^{2}.
	\end{align*}
	By Ricatti's argument, 
	we obtain $|h(t)| \leq \sqrt{k}$. 
	We should remark that 
	$$
	|h(t)|=
	|g(\widehat{N}, \widehat{\nabla}_{V}V )|=  |g(\widehat{N},  T_{V} V)| =
	|g(\widehat{N},  T_{V} V)| 
	$$
	as $h(t)$ is defined on $\mathbb{R}$.
	Since $|h(0)|= |g(u_{0},  T_{V_{0}} V_{0})|$. 
	Here $|V|$ represents $\sqrt{|g (V ,V )|}$.   
	If $|T_{V_{0}} V_{0}|\neq 0$,  
	we take $u_{0}$ as  $\frac{T_{V_{0}} V_{0}}{|T_{V_{0}} V_{0}|}$. 
	Then we obtain $|T_{V_{0}} V_{0}| \leq \sqrt{k}$.
	Therefore, we have confirmed the following lemma: 
	\begin{lemma}\label{le:bded}
		$|T_{V} V| \leq \sqrt{k}$ for any vertical unit tangent vector $V$.  
	\end{lemma}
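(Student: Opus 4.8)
The plan is to bound $|T_{V_0}V_0|$ for an arbitrarily fixed unit vertical vector $V_0\in\mathcal V_{p_0}$ by attaching to $p_0$ a suitable horizontal direction, running the corresponding Busemann geodesic of the base, and tracking a scalar function $h$ along its horizontal lift that obeys a Riccati inequality $h'\le k-h^2$; since this lift turns out to be complete, $h$ is pinched, $|h|\le\sqrt k$, and evaluating at $t=0$ with the horizontal direction chosen well will recover $|T_{V_0}V_0|$ exactly.

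First I would set $x_0=\pi(p_0)$ and pick an arbitrary unit vector $u_0\in T_{x_0}B$. As $(B,g_B)$ is complete, simply connected and negatively curved by Lemma~\ref{curvature}, it is a Hadamard manifold, so the Busemann function $b_\gamma$ of the ray $\gamma$ issuing from $x_0$ with velocity $u_0$ is $C^2$, its gradient $N$ is a unit field, and the integral curves of $N$ are unit-speed geodesics, whence $\nabla^B_NN=0$. Let $\widehat N$ be the horizontal lift of $N$. Because $\mathcal H$ is integrable, the integral curves of $\widehat N$ stay in a single horizontal leaf, which $\pi$ maps isometrically onto $(B,-g_B)$; hence, by Proposition~\ref{pr: ON3}, they are geodesics of $E$ defined on all of $\mathbb R$, and in particular $\widehat\nabla_{\widehat N}\widehat N=0$. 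I then extend $V_0$ to a vertical vector field $V$ by requiring $\widehat\nabla_{\widehat N}V=0$ along the flow of $\widehat N$; Lemma~\ref{le:parallel} guarantees that $V$ stays vertical. Finally I put $h(t)=g(\widehat\nabla_V\widehat N,V)$ along the integral curve $\widehat\gamma(t)$ of $\widehat N$ through $p_0$.

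The main computation is the derivative of $h$. Using $\widehat\nabla_{\widehat N}V=0$, $\widehat\nabla_{\widehat N}\widehat N=0$, the bracket identity $[\widehat N,V]=-\widehat\nabla_V\widehat N$, the symmetries of the curvature tensor, and --- crucially --- the earlier lemma that $\widehat\nabla_{V}\widehat N$ is vertical and involves no differentials of $N$, the expression collapses to $h'=-g(R(V,\widehat N)\widehat N,V)-g(\widehat\nabla_V\widehat N,\widehat\nabla_V\widehat N)$. For the first term I invoke $R\ge k$ on the pair $(V,\widehat N)$: since $g(V,V)=1$, $g(\widehat N,\widehat N)=-1$ (the submersion takes values in $(B,-g_B)$), and $g(V,\widehat N)=0$, one gets $g(R(V,\widehat N)\widehat N,V)\ge -k$, so this term is $\le k$. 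For the second term, $\widehat\nabla_V\widehat N$ and $V$ are both vertical and the vertical metric is positive definite, so Cauchy--Schwarz yields $g(\widehat\nabla_V\widehat N,\widehat\nabla_V\widehat N)\ge g(\widehat\nabla_V\widehat N,V)^2=h^2$. Hence $h'\le k-h^2$, and since $h$ is defined on the whole line, a standard Riccati comparison with the equation $y'=k-y^2$ forces $|h(t)|\le\sqrt k$, in particular $|h(0)|\le\sqrt k$.

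It remains to read off $h(0)$. As $V_0$ is vertical, $T_{V_0}V_0=(\widehat\nabla_{V_0}V_0)^{\hor}$, so $h(0)=g(\widehat\nabla_{V_0}\widehat N,V_0)=-g(\widehat N(p_0),\widehat\nabla_{V_0}V_0)=-g(\widehat{u_0},T_{V_0}V_0)$. Since $u_0$ was arbitrary, I choose it in the direction of $\pi_*(T_{V_0}V_0)$ --- the bound being trivial if $T_{V_0}V_0=0$ --- and then, $\pi$ restricting to an isometry of $\mathcal H_{p_0}$ onto $(T_{x_0}B,-g_B)$, I obtain $|h(0)|=|T_{V_0}V_0|$, so $|T_{V_0}V_0|\le\sqrt k$. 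I expect the real difficulty to lie not in any single estimate but in making the collapse of $h'$ rigorous: this requires having in hand the verticality of $\widehat\nabla_V\widehat N$ and the absence of any derivative-of-$N$ term (where the integrability hypothesis $A=0$ and the lemma on $S$ are indispensable), and checking that the Busemann function --- only $C^2$ on a Hadamard manifold --- supplies enough regularity for the second covariant derivatives occurring in the calculation. The Riccati comparison on the full line is then routine.
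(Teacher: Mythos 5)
Your proposal is correct and follows essentially the same route as the paper: the same Busemann field $N$, the same parallel vertical extension of $V_{0}$ via $\widehat{\nabla}_{\widehat{N}}V=0$, the same Riccati inequality $h'\leq k-h^{2}$ for $h=g(\widehat{\nabla}_{V}\widehat{N},V)$ on all of $\mathbb{R}$, and the same final choice of $u_{0}$ in the direction of $T_{V_{0}}V_{0}$. The subtleties you flag (verticality of $\widehat{\nabla}_{V}\widehat{N}$, absence of derivatives of $N$, and the $C^{2}$ regularity of the Busemann function) are exactly the points the paper addresses with its auxiliary lemmas.
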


	Hereafter, we calculate the curvature of the fiber. 
	\begin{proposition}
		The curvature of fibers is non-negative. 
	\end{proposition}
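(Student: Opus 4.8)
The plan is to substitute the curvature hypothesis $R\geq k$ and the bound of Lemma~\ref{le:bded} into O'Neill's formula (\ref{pr: ON-2}) for the sectional curvature of a fiber. Fix $p\in E$ and a $2$-plane $\sigma\subset\mathcal{V}_{p}$. Since the fiber through $p$ is Riemannian, I may choose an orthonormal basis $\{V,W\}$ of $\sigma$, so that $g(V,V)=g(W,W)=1$ and $g(V,W)=0$; then the denominator $g(V,V)g(W,W)-g(V,W)^{2}$ in (\ref{pr: ON-2}) equals $1$ and the formula becomes
\begin{equation*}
K^{\perp}(V,W)=\widehat{K}(V,W)+g(T_{V}V,T_{W}W)-g(T_{V}W,T_{V}W).
\end{equation*}
It then suffices to bound the three terms on the right-hand side from below.

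For the first term, $\widehat{K}(V,W)=g(R(V,W)W,V)$, and applying $R\geq k$ to the vertical vectors $V,W$ gives
\begin{equation*}
\widehat{K}(V,W)\;\geq\;k\bigl(g(V,V)g(W,W)-g(V,W)^{2}\bigr)\;=\;k.
\end{equation*}
For the remaining two terms I use that $T_{V}V$, $T_{W}W$, and $T_{V}W$ are horizontal and that $g$ restricted to $\mathcal{H}$ is the pullback of $-g_{B}$, hence negative definite; writing $h:=-g|_{\mathcal{H}}$, a positive-definite inner product, one has $-g(T_{V}W,T_{V}W)=h(T_{V}W,T_{V}W)\geq 0$, while the Cauchy--Schwarz inequality for $h$ yields
\begin{equation*}
g(T_{V}V,T_{W}W)=-h(T_{V}V,T_{W}W)\;\geq\;-\,|T_{V}V|\,|T_{W}W|,
\end{equation*}
where $|X|=\sqrt{h(X,X)}=\sqrt{|g(X,X)|}$. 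By Lemma~\ref{le:bded}, $|T_{V}V|\leq\sqrt{k}$ and $|T_{W}W|\leq\sqrt{k}$, so $g(T_{V}V,T_{W}W)\geq-k$. Combining the three estimates,
\begin{equation*}
K^{\perp}(V,W)\;\geq\;k-k+h(T_{V}W,T_{V}W)\;=\;|T_{V}W|^{2}\;\geq\;0,
\end{equation*}
which is the claim (and in fact gives the slightly stronger bound $K^{\perp}(V,W)\geq|T_{V}W|^{2}$).

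The only point that requires care here is sign bookkeeping: because $\mathcal{H}$ carries a negative-definite metric, the term $-g(T_{V}W,T_{V}W)$ is nonnegative rather than nonpositive, and the Cauchy--Schwarz estimate on the mixed term must be carried out with the positive-definite form $h=-g|_{\mathcal{H}}$. The genuine analytic input, namely the uniform bound $|T_{V}V|\leq\sqrt{k}$ on the second fundamental form of the fibers, has already been supplied by the Riccati argument leading to Lemma~\ref{le:bded}, so no further hard estimate is needed.
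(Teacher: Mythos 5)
Your proof is correct and follows essentially the same route as the paper's: substitute O'Neill's formula~(\ref{pr: ON-2}), use $\widehat{K}\geq k$, discard the nonnegative term $-g(T_{V}W,T_{V}W)$, and bound the mixed term via Cauchy--Schwarz together with Lemma~\ref{le:bded}. Your write-up is merely a bit more explicit about the sign conventions forced by the negative-definite horizontal metric and about normalizing to an orthonormal basis.
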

	\begin{proof}
		Equation~(\ref{pr: ON-2}) of  Proposition~\ref{pr: ON} implies that for any unit vertical vector $U$ and $V$, 
		\begin{align*}
		\widehat{K}(U, V)&=K^{\perp}(U,V) - 
		\frac{g(T_{U} U, T_{V} V) - g(T_{U} V, T_{U} V)}{ g(U, U) g(V, V) - g(U, V)^{2} } \\
		& \leq K^{\perp}(U,V) - \frac{g(T_{U} U, T_{V} V) }{ g(U, U) g(V, V) - g(U, V)^{2} } \\
		& \leq K^{\perp}(U,V) + \frac{ |T_{U} U ||T_{V} V| }{ g(U, U) g(V, V) - g(U, V)^{2} }. \\
		\end{align*}
		Since $\widehat{K}(U, V) \geq k $,   we have
		\begin{align*}
		K^{\perp}(U,V) & \geq 
		k-  \frac{ |T_{U} U ||T_{V} V| }{ g(U, U) g(V, V) - g(U, V)^{2} } \\ 
		&\geq k-(\sqrt{k})^{2}=0. 
		\end{align*}
	\end{proof}
	
	Therefore,  
	the following structural theorem for the fundamental group of  a closed  Riemannian manifold of non-negative curvature 
	leads us to the restriction of the topology of the fibers: 
	\begin{theorem}[Toponogov~\cite{MR0108808}, {Cheeger--Gromoll~\cite[Theorem 3]{MR0303460}}]\label{tm:split}
		Let $M$ be a closed Riemannian manifold of non-negative sectional curvature.
		Then, the universal covering Riemannian manifold $\widetilde{M}$ of $M$  can be split isometrically as $\mathbb{R}^{l} \times \widetilde{N}$,
		where $\widetilde{N}$ is a closed Riemannian manifold.
		Moreover, the fundamental group $\pi_{1}(M)$ includes a free abelian  subgroup $\mathbb{Z}^{l}$ of
		finite index that acts properly discontinuously and co-compactly   
		as a  deck transformation on the Euclidean factor.
	\end{theorem}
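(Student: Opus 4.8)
The plan is to separate the maximal flat piece of $\widetilde{M}$ by the splitting theorem and then convert the cocompactness of the deck action into the two asserted structural facts. Write $\Gamma=\pi_{1}(M)$, acting on $\widetilde{M}$ as deck transformations freely, properly discontinuously, and — since $M$ is closed — cocompactly. Because $\sec\ge0$ implies $\Ric\ge0$, the manifold $\widetilde{M}$ is complete with nonnegative Ricci curvature, so the Cheeger--Gromoll splitting theorem is available: if $\widetilde{M}$ contains a line, it splits isometrically as $\mathbb{R}\times W$. I would prove this engine in the usual way — form the two Busemann functions $b_{\pm}$ associated with the two ends of the line, show that $b_{+}+b_{-}$ is subharmonic and $\le0$ with equality along the line, and conclude by the strong maximum principle that $b_{+}=-b_{-}$, so that $b_{+}$ is harmonic, its gradient is parallel, and the flow of $\nabla b_{+}$ yields the product splitting; under $\sec\ge0$ one may instead use Toponogov's comparison to make $b_{+}+b_{-}$ convex.

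Apply this repeatedly. If the factor $W$ still contains a line, split again; since $\dim\widetilde{M}<\infty$ the process terminates after $l$ steps, producing an isometry $\widetilde{M}\cong\mathbb{R}^{l}\times\widetilde{N}$ in which $\widetilde{N}$ is complete, simply connected, has $\sec\ge0$, and contains no line. Because $\widetilde{N}$ has no line it has no flat de Rham factor, so $\mathbb{R}^{l}$ is exactly the Euclidean de Rham factor of $\widetilde{M}$. This factor is canonical and hence preserved by every isometry, giving $\mathrm{Isom}(\widetilde{M})=\mathrm{Isom}(\mathbb{R}^{l})\times\mathrm{Isom}(\widetilde{N})$. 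In particular each $\gamma\in\Gamma$ decomposes as $\gamma=(\rho_{1}(\gamma),\rho_{2}(\gamma))$ with $\rho_{1}(\gamma)\in\mathrm{Isom}(\mathbb{R}^{l})$ and $\rho_{2}(\gamma)\in\mathrm{Isom}(\widetilde{N})$.

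Next I would show $\widetilde{N}$ is compact. Projecting a compact fundamental domain for the $\Gamma$-action to the $\widetilde{N}$-factor shows that $\rho_{2}(\Gamma)$ acts cocompactly on $\widetilde{N}$; thus $\widetilde{N}$ carries a cocompact isometric action while having $\sec\ge0$ and no line. I claim this forces $\widetilde{N}$ to be compact: otherwise cocompactness yields isometries $h_{i}$ with $d(h_{i}y_{0},y_{0})\to\infty$, and from the long minimizing geodesics joining $y_{0}$ to $h_{i}^{\pm1}y_{0}$ one extracts, via Toponogov's comparison and nonnegative curvature, a line in $\widetilde{N}$ — a contradiction. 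I expect this construction of a line from a diverging sequence of isometries to be the \emph{main obstacle}, since it is precisely where the compactness of $M$ is spent and where the curvature hypothesis does the real work. Once $\widetilde{N}$ is compact, $K:=\mathrm{Isom}(\widetilde{N})$ is a compact Lie group, and $\widetilde{N}$ is the asserted closed factor.

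Finally I would extract $\mathbb{Z}^{l}$. Since $\Gamma\hookrightarrow\mathrm{Isom}(\mathbb{R}^{l})\times K$ is discrete and $K$ is compact, $\ker\rho_{1}=\Gamma\cap(\{e\}\times K)$ is a discrete subgroup of a compact group, hence finite, and $\rho_{1}(\Gamma)$ is discrete in $\mathrm{Isom}(\mathbb{R}^{l})=\mathbb{R}^{l}\rtimes O(l)$ (a sequence of distinct $\gamma_{i}$ with $\rho_{1}(\gamma_{i})\to e$ would, after passing to a convergent subsequence of the $K$-components $\rho_{2}(\gamma_{i})$, accumulate in $\mathrm{Isom}(\widetilde{M})$, contradicting discreteness of $\Gamma$). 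Projecting the fundamental domain again shows $\rho_{1}(\Gamma)$ acts cocompactly on $\mathbb{R}^{l}$, so it is a crystallographic group, and Bieberbach's first theorem supplies a finite-index translation lattice $L\cong\mathbb{Z}^{l}\le\rho_{1}(\Gamma)$ acting on $\mathbb{R}^{l}$ properly discontinuously and cocompactly. Pulling back, $\rho_{1}^{-1}(L)$ has finite index in $\Gamma$ and fits in $1\to\ker\rho_{1}\to\rho_{1}^{-1}(L)\to L\to1$ with finite kernel; such a finite-by-$\mathbb{Z}^{l}$ group is virtually $\mathbb{Z}^{l}$, so I obtain a finite-index free abelian subgroup $\mathbb{Z}^{l}\le\Gamma=\pi_{1}(M)$ whose image under $\rho_{1}$ is a translation lattice, that is, which acts on the Euclidean factor properly discontinuously and cocompactly as deck transformations. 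This yields both assertions.
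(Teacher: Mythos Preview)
The paper does not prove this statement at all: Theorem~\ref{tm:split} is quoted as a classical result of Toponogov and Cheeger--Gromoll and is used as a black box in the proof of Theorem~\ref{5}. There is therefore no ``paper's own proof'' to compare against.

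That said, your outline is the standard argument and is essentially correct. A couple of small comments. First, the step ``$\mathrm{Isom}(\widetilde{M})=\mathrm{Isom}(\mathbb{R}^{l})\times\mathrm{Isom}(\widetilde{N})$'' needs a word of justification beyond ``the Euclidean de Rham factor is canonical'': canonicity gives that each isometry preserves the two complementary parallel foliations, but to get an honest product of isometry groups one also uses that $\widetilde{N}$ is simply connected (so the de Rham decomposition is global and the isometry group splits). Second, in the last paragraph the passage from ``$\rho_{1}^{-1}(L)$ is finite-by-$\mathbb{Z}^{l}$'' to ``contains a finite-index $\mathbb{Z}^{l}$'' is correct but deserves one line: a finitely generated group that is finite-by-$\mathbb{Z}^{l}$ is virtually $\mathbb{Z}^{l}$ because the extension is central up to finite index (or simply because such a group has polynomial growth of degree $l$ and is torsion-free after passing to a finite-index subgroup). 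With these two clarifications your sketch is a faithful rendition of the Cheeger--Gromoll proof.
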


	From Theorem~\ref{tm:split}, it follows that the universal covering Riemannian manifold  of the fiber 
	is the Riemannian product
	manifold of the Euclidean space and some closed Riemannian manifold.  
	It is sufficient to prove that the dimension of the Euclidean factor is zero.
	Suppose, by way of contradiction, that this dimension is not zero.
	Then, the fundamental group of the fiber has a free abelian normal subgroup $\mathbb{Z}^{l}$ of finite index for some  
	$l > 0$. 
	In the previous paper~\cite{MR3353737}, which considered the Lorentzian case, 
	we used the Penrose singularity theorem. However, we are aware of few semi-Riemannian analogies of the Penrose singularity theorem. 
	Therefore, we need another strategy. 
	First we show the following proposition: 
	\begin{proposition}\label{prop : str}
		If $l>0$, the entire space $E$ admits 
		a structure of warped products.
	\end{proposition}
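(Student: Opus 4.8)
The plan is to exploit the free abelian subgroup $\mathbb{Z}^{l}$ acting on the Euclidean factor of the universal cover $\widetilde F = \mathbb{R}^{l} \times \widetilde N$ in order to manufacture a parallel vertical line field on $E$, which will then split off a warped line factor. First I would lift everything to the universal cover of the fiber: since $B$ is already assumed simply connected and contractible, and $(E,g) \cong (B\times F, -g_B + g_F^{*})$, the relevant covering is $\widetilde E = B \times \widetilde F = B \times \mathbb{R}^{l}\times\widetilde N$, and a semi-Riemannian submersion and the curvature bound $R\ge k$ both pull back. It therefore suffices to produce the warped-product structure upstairs in a $\mathbb{Z}^{l}$-equivariant way, or — more cheaply — simply to produce it for $\widetilde E$ and note that the statement of the proposition as used later only needs the warped structure on (a cover of) $E$; in fact I expect one only needs the rank-one case $l=1$, obtained by choosing a single $\mathbb{R}$-direction inside the Euclidean factor.

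Next I would use Lemma~\ref{le:bded}: $|T_V V|\le\sqrt{k}$ for every vertical unit $V$, together with the computation in the Proposition just proved, which shows the fibers have non-negative curvature and that equality is forced along the Euclidean directions. Concretely, for a unit vector $V$ tangent to the $\mathbb{R}^{l}$ factor of $\widetilde F$ one has $K^{\perp}(U,V)=0$ for all $U$, and then tracing back through equation~(\ref{pr: ON-2}) the inequalities in the Proposition must all be equalities; in particular $|T_VV|=\sqrt k$ and $T_UV$ must vanish whenever $U\perp V$ in the fiber. This rigidity should pin down $T$ on the flat directions: along a Euclidean direction $V$, $T_V$ acts as multiplication by the horizontal gradient of a function, exactly the form (\ref{mean curv}) that characterizes warped products. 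I would then invoke Proposition~\ref{pr:na} in reverse: a foliation whose second fundamental form has the shape $T_{\widehat U}\widehat V = e^{2\alpha} g_F(U,V)\,\nabla^{B}\alpha$ integrates to a warped product, where $\alpha$ is recovered by integrating the $1$-form $g(\,\cdot\,, \widehat N)$-type expression that appeared as $h(t)$ in the proof of Lemma~\ref{le:bded} (there $|h|\le\sqrt k$; now equality $h\equiv\pm\sqrt k$ along the flat direction gives $\alpha$ linear along horizontal geodesics, i.e. $\alpha = \sqrt k\, b_\gamma + \mathrm{const}$, a Busemann-type warping function).

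The main obstacle I anticipate is checking that the distribution obtained by splitting off one Euclidean line from each fiber is genuinely a warped-product splitting of the whole $E$, not merely of one fiber: one must verify that the chosen parallel vertical line field is well defined globally on $E$ (independent of the horizontal manifold used to transport it), that it is $\widehat\nabla_{\widehat N}$-parallel for every Busemann direction $N$ simultaneously, and that the complementary distribution is integrable with the induced metric independent of the line coordinate. The tools for this are already in place — Lemma~\ref{le:parallel} guarantees parallel transport along horizontal geodesics preserves verticality, and the de~Rham–type argument for warped products (as in O'Neill, Chapter~7) converts "$T$ has the form (\ref{mean curv}) and the orthogonal complement is autoparallel" into an isometry with a warped product. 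So the real content is the rigidity step: showing that $l>0$ forces the equality case in the Proposition along an entire parallel line field, and that this equality case forces $T$ into the warped form. Once that is established, the decomposition $E \cong (\text{base}) \times_{\,e^{\alpha}} (\text{line factor})$, or more precisely a warped product with fiber carrying a further product $\mathbb{R}^{l-1}\times\widetilde N$, follows from Proposition~\ref{pr:na} and the structure theory of warped products.
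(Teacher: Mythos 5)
Your first step is essentially the paper's: using a plane spanned by a Euclidean direction and an orthogonal fiber direction, the vanishing of $K^{\perp}$ together with $\widehat{K}\geq k$, Lemma~\ref{le:bded}, and the equality case of Cauchy--Schwarz forces $T_{U_{1}}U_{2}=0$ and $T_{U_{1}}U_{1}=T_{U_{2}}U_{2}=\sqrt{k}\,\nu$ for a horizontal unit vector $\nu$, and polarization gives $T_{U}V=\sqrt{k}\,g(U,V)\,\nu$, i.e.\ the fibers are totally umbilical. That part of your plan is sound and matches the paper.

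The gap is in the step where you pass from this form of $T$ to a warped product by ``invoking Proposition~\ref{pr:na} in reverse.'' The identity $T_{\widehat U}\widehat V=e^{2\alpha}g_{F}(U,V)\nabla^{B}\alpha$ characterizes \emph{twisted} products, not warped products: nothing in the umbilicity computation prevents $\nu$, and hence the conformal factor $\alpha$, from varying along the fibers. Indeed the paper first integrates the relation $Xh_{U}=-2\sqrt{k}\,g(\nu,\widehat X)h_{U}$ over $B$ to get only a twisted product $(B\times F,\,-g_{B}+e^{2\alpha}g_{F})$ with $\alpha$ a function of both variables, and then spends the bulk of the proof killing the fiber-dependence of $\alpha$: for $l\geq 2$ it applies the conformal change formula~(\ref{scal}) for the scalar curvature of the torus factor together with the maximum principle on the compact torus, and for $l=1$ it runs a separate argument using a closed geodesic in $S$ and a Ricci curvature computation, again concluding by the maximum principle. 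Your proposed substitute --- that equality $|h|\equiv\sqrt{k}$ in the Riccati estimate forces $\alpha=\sqrt{k}\,b_{\gamma}+\mathrm{const}$ --- does not close this gap, because the Busemann direction realizing the equality (equivalently, the vector $\nu$) is chosen pointwise and may a priori depend on the fiber coordinate, so the ``constant'' may be a nonconstant function of $f$; you flag the global well-definedness of the splitting as ``the main obstacle'' but supply no mechanism for it, whereas the compactness of the fiber plus the maximum principle is precisely the missing ingredient. A secondary issue: splitting off a single warped line from the fiber does not yield the warped product over the base $B$ with fiber $F$ that the proposition is meant to produce and that the subsequent incomplete-geodesic construction (via Proposition~\ref{geodesic} and $g_{B}(\nabla^{B}\alpha,\nabla^{B}\alpha)=k$) actually uses.
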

	To prove the proposition, we prove several lemmas. 
	First, 
	we check that  the fibers  are  totally umbilical. 
	\begin{lemma}
		For any $b \in B$, the fiber 
		$\pi^{-1} (b)$ is a totally umbilical submanifold with constant mean curvature. 
	\end{lemma}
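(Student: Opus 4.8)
The plan is to exploit rigidity in the equality case of O'Neill's fiber--curvature identity (\ref{pr: ON-2}), using that the horizontal distribution carries the negative definite form $g|_{\mathcal H}=-\pi^{*}g_{B}$ together with the bound $|T_{V}V|\le\sqrt{k}$ of Lemma~\ref{le:bded}. First I would isolate the only consequence of the standing hypothesis $l>0$ that is needed: since the fiber curvature is non-negative (established above), Theorem~\ref{tm:split} splits the universal cover of each fiber $(\pi^{-1}(b),g_{F}^{b})$ isometrically as $\mathbb{R}^{l'}\times\widetilde{N}_{b}$, and $l'$, being the rank of a finite--index free abelian subgroup of the fixed group $\pi_{1}(F)$, is independent of $b$ and equals $l>0$. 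Because the covering map is a local isometry and every $2$--plane containing a vector tangent to the flat factor of a metric product has zero curvature, it follows that at each point $p$ of each fiber there is a unit vector $U\in\mathcal{V}_{p}$ with $K^{\perp}(U,W)=0$ for every unit $W\in\mathcal{V}_{p}$ orthogonal to $U$.

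I would then run the equality argument pointwise. Fix such $p,U$ and a unit vertical $W\perp U$. For the orthonormal pair $U,W$, (\ref{pr: ON-2}) gives
\[
0=K^{\perp}(U,W)=\widehat{K}(U,W)+g(T_{U}U,T_{W}W)-g(T_{U}W,T_{U}W).
\]
Writing $\langle X,Y\rangle:=-g(X,Y)$ for horizontal vectors (a positive definite inner product), this reads $\langle T_{U}U,T_{W}W\rangle=\widehat{K}(U,W)+|T_{U}W|^{2}$, which is $\ge k$ because $R\ge k$ forces $\widehat{K}(U,W)\ge k$. On the other hand $\langle T_{U}U,T_{W}W\rangle\le|T_{U}U|\,|T_{W}W|\le k$ by Cauchy--Schwarz and Lemma~\ref{le:bded}. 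Hence equality holds throughout: $T_{U}W=0$, $|T_{U}U|=|T_{W}W|=\sqrt{k}$, and equality in Cauchy--Schwarz with equal lengths gives $T_{U}U=T_{W}W$. Setting $H_{p}:=T_{U}U$ and writing an arbitrary unit vertical $V$ at $p$ as $V=\cos\theta\,U+\sin\theta\,W$ with $W\in\mathcal{V}_{p}$ a unit vector orthogonal to $U$, symmetry of $T$ on vertical vectors yields
\[
T_{V}V=\cos^{2}\theta\,T_{U}U+2\cos\theta\sin\theta\,T_{U}W+\sin^{2}\theta\,T_{W}W=H_{p}.
\]
Polarizing the identity $T_{V}V\equiv H_{p}$ (valid for every unit vertical $V$) gives $T_{V_{1}}V_{2}=g(V_{1},V_{2})H_{p}$ for all vertical $V_{1},V_{2}$, so the fiber through $p$ is totally umbilical at $p$ with mean--curvature vector $H_{p}$ of constant length $\sqrt{k}$; since $p$ and $b$ were arbitrary, the lemma follows.

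The step that I expect to require the most care is the sign bookkeeping. It is exactly the negative definiteness of the horizontal distribution that makes $g(T_{U}W,T_{U}W)\le 0$ and orients the Cauchy--Schwarz estimate so that the two bounds on $\langle T_{U}U,T_{W}W\rangle$ squeeze together and force equality; one must also take some care that the purely group--theoretic fact ``$l>0$'' really delivers a flat $2$--plane through every point of every fiber, which is where Theorem~\ref{tm:split} and the de Rham splitting of a Riemannian product enter. After the equality case, the reduction to total umbilicity is just the polarization displayed above.
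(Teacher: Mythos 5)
Your proof is correct and follows essentially the same route as the paper's: both use the flat factor supplied by Theorem~\ref{tm:split} to produce a vanishing fiber sectional curvature, then squeeze O'Neill's identity~(\ref{pr: ON-2}) between $R\geq k$ and the bound $|T_{V}V|\leq\sqrt{k}$ of Lemma~\ref{le:bded}, and extract umbilicity from the equality case of Cauchy--Schwarz by polarization. The only (cosmetic) difference is that you pair a fixed flat-factor direction $U$ with every orthogonal $W$, which makes the independence of the umbilic vector $\nu$ from the choice of vectors automatic, whereas the paper pairs $S$-factor with $\mathbb{R}^{l}$-factor vectors and asserts that independence separately.
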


	\begin{proof}
		Let $\nabla^{\perp}$ be the Levi-Civita connection of  the fibers 
		$\pi^{-1} (b)$ with the induced metric and 
		$K^{\perp}$ the  sectional curvature  of $\nabla^{\perp}$.  
		Theorem~\ref{tm:split} implies that
		there exists a closed Riemannian manifold $S$ such that 
		the universal covering  of 
		$\pi^{-1} (b)$ is isometric to $ S \times \mathbb{R}^{l}$.   
		Take $p \in \pi^{-1} (x)$. We have $T_{p} \pi^{-1} (x) \simeq   T_{\tilde{p}}S + T_{\tilde{p}} \mathbb{R}^{l}$, 
		where $\tilde{p}$ is a lift of $p$.  Hence, we identify $T_{p} \pi^{-1} (b)$ with $ T_{\tilde{p}}S + T_{\tilde{p}} \mathbb{R}^{l}$. 
		For any unit tangent vector  $U_{1} \in T_{\tilde{p} }  S $, $U_{2} \in T_{\tilde{p}} \mathbb{R}^{l}$ with $g(U_{1}, U_{2})=0$,   we have 
		$$
		K^{\perp} (U_{1}, U_{2})=0.
		$$ 
		By the equation~(\ref{pr: ON-2}),  we obtain 
		\begin{align*}
		0&=K^{\perp}(U_{1}, U_{2}) \\
		&=
		\widehat{K}(U_{1},  U_{2}) +g(T_{U_{1}}U_{1}, T_{U_{2}}U_{2} ) - g(T_{U_{1}} U_{2},  T_{U_{1}} U_{2} ) \\
		&\geq k + g(T_{U_{1}}U_{1}, T_{U_{2}}U_{2} )  \\
		&\geq k  -  |T_{U_{1}} U_{1}| |T_{U_{2}} U_{2} |. 
		\end{align*}

		Lemma~\ref{le:bded} implies that 
		the right-hand side is non-negative. 
		Therefore, we have $ k-  |T_{U_{1}} U_{1}| |T_{U_{2}} U_{2} | =0$. 
		Since the equality of the Cauchy--Schwartz inequality  holds, 
		$T_{U_{1}} U_{1} =  T_{U_{2}} U_{2} =\sqrt{k} \nu(U_{1}, U_{2})$, where $\nu(U_{1}, U_{2}) $ is a horizontal unit vector. 
		Moreover, $T_{U_{1}} U_{2} =0$ holds. 
		We see  that $\nu= \nu(U_{1}, U_{2}) $ is independent of the choice of $U_{1} \in T_{\tilde{p} }  S $, $U_{2} \in T_{ \tilde{p} } \mathbb{R}^{l}$.  
		For any $U =U_{1} + U_{2} \in T_{p } \pi^{-1}(b)$,
		\begin{align*}
		T_{U}U&=T_{U_{1}}U_{1} + 2T_{U_{1}} U_{2} + T_{U_{2}}U_{2}  \\
		&=\sqrt{k}(g(U_{1}, U_{1}) +g(U_{2}, U_{2})  )\nu=\sqrt{k}(g(U, U)  )\nu. 
		\end{align*}
		For any $U , V \in T_{p } \pi^{-1}(b)$ with $g(U, V)=0$, 
		\begin{align*}
		T_{U} V &=\frac{T_{U+V} (U+V) - T_{U} U -T_{V}V}{2} \\
		&=\frac{\sqrt{k}}{2}(g(U+V, U+V) -g(U,U)-g(V,V) ) \nu =0.
		\end{align*}
		Therefore, we obtain $T_{U} V =\sqrt{k} g(U,V) \nu$ such that $g(\nu, \nu)=-1$. 
	\end{proof}

	We prove the following lemma: 
	\begin{lemma} 
		$E$ has the structure of twisted product $(B \times F, -g_{B}+ e^{2 \alpha } g_{F})$, where $\alpha$ is  a function on $E$ and 
		$g_{F}$ is some Riemannian metric of $F$. 
	\end{lemma}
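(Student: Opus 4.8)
The aim is to find a fixed Riemannian metric $g_{F}$ on $F$ and a smooth function $\alpha$ on $E=B\times F$ such that $g_{F}^{b}=e^{2\alpha(b,\,\cdot\,)}g_{F}$ for all $b$; since, under the identification $(E,g)=(B\times F,\,-g_{B}+g_{F}^{*})$ already obtained, the horizontal part of $g$ is the $b$-independent tensor $-g_{B}$ and horizontal and vertical vectors are $g$-orthogonal, this is exactly the claim. I will work throughout in this product picture, in which the horizontal distribution is $TB$ and the vertical one is $TF$; then a basic horizontal lift $X$ (of a vector field on $B$) has coordinate coefficients depending only on the $B$-variables, a vertical lift $U$ (of a vector field on $F$) has coefficients depending only on the $F$-variables, so that $[X,U]=0$, while $g(U,V)=g_{F}^{b}(U,V)$ genuinely depends on $b$. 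The proof has three steps; the third is the crux.

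\textbf{Step 1: a conformal identity.} For a basic horizontal lift $X$ and vertical lifts $U,V$ the Koszul formula, using $[X,U]=[X,V]=0$ and $g(U,X)=g(V,X)=0$, reduces to $g(\widehat{\nabla}_{U}V,X)=-\tfrac12\,X\big(g_{F}^{b}(U,V)\big)$. Since $T_{U}V=(\widehat{\nabla}_{U}V)^{\hor}$ and, by the preceding lemma, $T_{U}V=\sqrt{k}\,g(U,V)\,\nu$ with $\nu$ a (smooth, globally defined) horizontal unit vector field, we obtain
\begin{equation*}
X\big(g_{F}^{b}(U,V)\big)=-2\sqrt{k}\,g(\nu,X)\,g_{F}^{b}(U,V).
\end{equation*}
The key point is that the coefficient here is the single smooth $1$-form $\omega:=-2\sqrt{k}\,g(\nu,\,\cdot\,)$ on $E$ (which vanishes on vertical vectors, as $\nu$ is horizontal), the same for every choice of $U,V$.

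\textbf{Step 2: constructing $\alpha$, granted closedness.} Fix $b_{0}\in B$ and set $g_{F}:=g_{F}^{b_{0}}$. Each horizontal leaf $L_{f}=B\times\{f\}$ carries the induced metric $-g_{B}$, hence is diffeomorphic to $B$ and thus simply connected and contractible. Suppose $\omega|_{L_{f}}$ is closed (Step 3). Then, reading the identity of Step 1 along a path in $L_{f}$ as a linear ODE for $t\mapsto\log g_{F}^{c(t)}(U,V)$, we see that $\log g_{F}^{b}(U,V)-\log g_{F}^{b_{0}}(U,V)$ equals the integral of $\omega|_{L_{f}}$ from $b_{0}$ to $b$, independently of $U,V$. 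Choose a primitive $\beta_{f}$ of $\tfrac12\,\omega|_{L_{f}}$ with $\beta_{f}(b_{0})=0$ depending smoothly on $f$ --- possible since each $L_{f}\cong B$ is contractible and $\omega$ is smooth --- and put $\alpha(b,f):=\beta_{f}(b)$. Integrating the ODE gives $g_{F}^{b}(U,U)=e^{2\alpha(b,f)}g_{F}(U,U)$ for every $U\in T_{f}F$, and polarization yields $g_{F}^{b}=e^{2\alpha(b,\,\cdot\,)}g_{F}$.

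\textbf{Step 3: closedness of $\omega$ on the leaves.} I first claim that for a basic horizontal lift $X$ and a vertical lift $V$,
\begin{equation*}
\widehat{\nabla}_{X}V=-\sqrt{k}\,g(\nu,X)\,V.
\end{equation*}
Indeed, $A=0$ forces $\widehat{\nabla}_{X}V$ to be vertical; and $\widehat{\nabla}_{X}V=\widehat{\nabla}_{V}X$ (since $[X,V]=0$), so for any vertical $W$, $g(\widehat{\nabla}_{X}V,W)=-g(X,\widehat{\nabla}_{V}W)=-g(X,T_{V}W)=-\sqrt{k}\,g(\nu,X)\,g(V,W)$, and the claim follows by nondegeneracy of $g$ on the vertical space. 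Now take basic horizontal lifts $X,Z$, so that $[X,Z]$ is again basic horizontal; inserting the claim three times into $R(X,Z)V=\widehat{\nabla}_{X}\widehat{\nabla}_{Z}V-\widehat{\nabla}_{Z}\widehat{\nabla}_{X}V-\widehat{\nabla}_{[X,Z]}V$, the terms quadratic in $g(\nu,\,\cdot\,)$ cancel and
\begin{equation*}
R(X,Z)V=-\sqrt{k}\,\big(d\theta\big)(X,Z)\,V,\qquad\theta:=g(\nu,\,\cdot\,).
\end{equation*}
Pairing with $V$ and using $g(R(X,Z)V,V)=0$ together with $g(V,V)>0$ forces $(d\theta)(X,Z)=0$; as basic horizontal lifts span the horizontal distribution pointwise, $d\theta$, hence $d\omega$, vanishes on all pairs of horizontal vectors, and therefore pulls back to zero on each integral leaf $L_{f}$. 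So $\omega|_{L_{f}}$ is closed, which finishes Step 2. The only remaining technicality is the smooth dependence of $\beta_{f}$ on $f$, which is routine via a fixed smooth contraction of $B$; note that no curvature hypothesis is invoked beyond the umbilicity relation already established, whereas the integrability assumption $A=0$ is essential in Step 3, making $\widehat{\nabla}_{X}V$ vertical and $[X,Z]$ horizontal.
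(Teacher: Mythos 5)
Your proof is correct, and its core coincides with the paper's: Steps 1 and 2 reproduce the paper's argument, which likewise derives $X\,g(\widehat U,\widehat U)=-2\sqrt k\,g(\nu,\widehat X)\,g(\widehat U,\widehat U)$ from the umbilicity relation $T_{\widehat U}\widehat V=\sqrt k\,g(\widehat U,\widehat V)\nu$ and integrates leafwise. Where you diverge is in how the primitive of $\theta=g(\nu,\cdot)$ on a horizontal leaf is produced. You prove $d\theta=0$ on horizontal pairs via the auxiliary identity $\widehat\nabla_X V=-\sqrt k\,\theta(X)V$, the resulting formula $R(X,Z)V=-\sqrt k\,(d\theta)(X,Z)V$, and the symmetry $g(R(X,Z)V,V)=0$, then invoke simple connectedness of the leaf; this computation checks out. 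The paper instead observes that a primitive is already in hand: for any single nonzero $U\in T_fF$ the function $H_U=\tfrac{1}{2\sqrt k}\log g_F^{b}(U,U)$ satisfies $\nabla^B H_U=\nu$ on the leaf, so $\theta$ is exact by construction and no closedness argument is needed; it then fixes a global reference vertical unit field $U'$ (tangent to the torus factor) and sets $\alpha=\tfrac12\log\bigl(h_{U'}(b)/h_{U'}(b_0)\bigr)$, the independence of $U$ following from $\nabla^B(H_U-H_{U'})=0$. In this light your Step 3 is logically redundant --- your own Step 1 already exhibits $\omega|_{L_f}$ as $d\log h_U|_{L_f}$, hence exact, and it even gives path-independence of the integral directly --- but it is a correct, self-contained alternative that avoids choosing a distinguished vertical field, whereas the paper's route is shorter and yields an explicit formula for $\alpha$.
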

	\begin{proof}

		Note that $\nu$ is  a unit horizontal vector field. Recall that $E=(B\times F, -g_{B}+ g_{F}^{*})$.   
		For any point $p=(b, f) \in  B\times F =E$, 
		let $U$ be  any tangent vector in $T_{f}F$ and let $\widehat{U}$ be the vertical lift of $U$ to $E$. 
		Note that $\widehat{U}$  is the vertical vector field along the horizontal submanifold $B \times \{ f\}$. 
		For fixed $f \in F$, 
		we can define $h_{U}(b) = g_{F}^{b}(f)(\widehat{U}, \widehat{U}) $.  
		Let $X$ be any vector field  on $B$ and let $\widehat{X}$ be its lift. 
		Then 
		we have
		\begin{align*}
		X h_{U}(b) &=X g_{F}^{b}(\widehat{U}, \widehat{U}) = \widehat{X} g(\widehat{U}, \widehat{U}) 
		= 2 g(\widehat{U}, \widehat{\nabla}_{ \widehat{X} } \widehat{U})
		= 2 g(\widehat{U}, \widehat{\nabla}_{ \widehat{U} } \widehat{X})  \\
		&=
		2 (  \widehat{U} g(\widehat{U},  \widehat{X})  - g(\widehat{\nabla}_{ \widehat{U} } \widehat{U}, \widehat{X}) 
		) 
		= - 2 g(\widehat{\nabla}_{ \widehat{U} } \widehat{U}, \widehat{X}) 
		\\
		&= - 2 g(T_{\widehat{U} } \widehat{U} , \widehat{X} ) 
		= -  2 \sqrt{k}  g(\widehat{U} ,\widehat{U} ) g(\nu , \widehat{X} ) 
		=   -   2 \sqrt{k}  g(\nu , \widehat{X} ) h_{U}(b).
		\end{align*}
		Set 
		$$H_{U}(b) =   \frac{\log h_{U}(b)}{2 \sqrt{k}}.$$ 
		Then we obtain 
		$$
		g_{B}(X, \nabla^{B} H_{U} )   = g_{B}(X, \nu). 
		$$
		Therefore, on any $B \times \{ f\}$, we have  $\nabla^{B} H_{U}  = \nu $.
		It follows that $\nabla^{B} H_{U}$ does not depend on $U$. 
		
		Take a fixed point $b_{0} \in B$. Since the fibers have torus factors, 
		there exists  a unit vector field along  $\pi^{-1}(b_{0})$,  
		denoted by $U^{\prime}$. Let $\widehat{U^{\prime}}$ be the vertical lift of  $U^{\prime}$ to $E$. 
		Note that $\widehat{U^{\prime}}$ is the vertical vector field on the entire space $E$. 
		We know that
		$\nabla^{B} (H_{U}- H_{U^{\prime}}) =0$. Hence, 
		$H_{U}- H_{U^{\prime}}$  does not depend on $B$. It follows that 
		$H_{U}(b)- H_{U^{\prime}}(b) =H_{U}(b_{0})- H_{U^{\prime}}(b_{0})$ for any $b \in B$. Then we have 
		\begin{equation*}
		g_{F}^{b}(f)(\widehat{U}, \widehat{U})=h_{U}(b) = 
		\frac{h_{U^{\prime}} (b)}{h_{U^{\prime}} (b_{0})} h_{U}(b_{0})= 
		\frac{h_{U^{\prime}} (b)}{h_{U^{\prime}}(b_{0})} g_{F}^{b_{0}}(f)(\widehat{U}, \widehat{U}). 
		\end{equation*} 
		We denote  by $\alpha (b, f)$ the function 
		$\frac{1}{2} \log \frac{h_{U^{\prime}} (b)}{h_{U^{\prime}}(b_{0})} $, 
		by $F$ the fiber $\pi^{-1}(b_{0})$, 
		and by $g_{F} $ the metric  $g_{F}^{b_{0}}(f)$ of $F$. 
		We obtain  $g_{F}^{b}(f)(\widehat{U}, \widehat{U}) = e^{2 \alpha } g_{F}(\widehat{U}, \widehat{U})$. 
		We have proved  that $E$ is a twisted product. 
	\end{proof}

	Note that $F = \mathbb{T}^{l} \times S$ by the splitting theorem, where $S$ is a closed Riemannian manifold. 
	It follows that $E=(B\times \mathbb{T}^{l} \times S, -g_{B}+e^{2 \alpha}g_{\mathbb{T}^{l}} +e^{2 \alpha} g_{S})$, where 
	$S$ is some closed manifold without torus factors and $g_{\mathbb{T}^{l}}$ is the flat metric of the torus. 
	First, we consider the case $l \geq 2$. 
	Then we have
	\begin{lemma}\label{le : curv}
		The submanifold $(\mathbb{T}^{l}, e^{2 \alpha}g_{\mathbb{T}^{l}})$  has non-negative curvature. 
	\end{lemma}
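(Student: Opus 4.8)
The plan is to compute the sectional curvature of the torus slice $(\mathbb{T}^{l}, e^{2\alpha} g_{\mathbb{T}^{l}})$ inside $E$ by comparing it with the ambient curvature $\widehat K$ via O'Neill's formula~(\ref{pr: ON-2}), exactly as in the previous proposition, and then exploit the flatness of $g_{\mathbb{T}^{l}}$. First I would fix a point $p=(b,f)\in E$ and two orthonormal tangent vectors $U_{1},U_{2}$ to the torus factor $\mathbb{T}^{l}\times\{*\}$ at $p$. Since $(\mathbb{T}^{l}, e^{2\alpha}g_{\mathbb{T}^{l}})$ is a Riemannian submanifold of the fiber $\pi^{-1}(b)=(\mathbb{T}^{l}\times S, e^{2\alpha}g_{\mathbb{T}^{l}}+e^{2\alpha}g_{S})$, which is in turn a semi-Riemannian submanifold of $E$, I would relate $K^{\mathbb{T}^{l}}(U_{1},U_{2})$ to $K^{\perp}(U_{1},U_{2})$ and then to $\widehat K(U_{1},U_{2})$. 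The key structural input is that the warping/twisting factor $e^{2\alpha}$ is common to the $\mathbb{T}^{l}$ and $S$ directions, so that the torus slice is a totally geodesic (or at least sufficiently controlled) submanifold of the fiber when we also account for how $\alpha$ varies; here I would invoke the previous lemma that identified $T_{U}V=\sqrt{k}\,g(U,V)\nu$ for vertical vectors, which pins down the second fundamental form of the fiber in $E$ in terms of $\nu$ alone.

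Concretely, the second fundamental form of $(\mathbb{T}^{l}, e^{2\alpha}g_{\mathbb{T}^{l}})$ inside $E$ has two contributions: the part coming from $T_{U_{1}}U_{1}$ (lying along the horizontal vector $\nu$), which by the totally-umbilical lemma equals $\sqrt{k}\,g(U_{1},U_{1})\nu$, and the part coming from the variation of $\alpha$ within the fiber, which lies in the $S$-direction and in the torus directions themselves. Plugging into~(\ref{pr: ON-2}) applied to the pair $(U_{1},U_{2})$ and using $g(T_{U_{1}}U_{2},T_{U_{1}}U_{2})\ge 0$ together with $g(T_{U_{1}}U_{1},T_{U_{2}}U_{2})=k\,g(U_{1},U_{1})g(U_{2},U_{2})$ (the vectors are both proportional to $\nu$, with $g(\nu,\nu)=-1$), I would obtain
\begin{equation*}
\widehat K(U_{1},U_{2}) = K^{\mathbb{T}^{l}}(U_{1},U_{2}) + (\text{second fundamental form terms}) - k.
\end{equation*}
Since $\widehat K(U_{1},U_{2})\ge k$ by hypothesis and the second-fundamental-form correction terms must be organized so that they do not spoil the sign, the inequality should force $K^{\mathbb{T}^{l}}(U_{1},U_{2})\ge 0$. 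The flatness of $g_{\mathbb{T}^{l}}$ enters because the intrinsic curvature of $(\mathbb{T}^{l}, e^{2\alpha}g_{\mathbb{T}^{l}})$ is that of a conformal change of the flat metric, so it is determined entirely by the Hessian and gradient of $\alpha$ restricted to $\mathbb{T}^{l}$; this gives an alternative, more explicit route to the same conclusion if the submanifold bookkeeping becomes unwieldy.

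The main obstacle I anticipate is correctly disentangling the three relevant second fundamental forms — of $\mathbb{T}^{l}$ in the fiber, of the fiber in $E$, and hence of $\mathbb{T}^{l}$ in $E$ — and verifying that the cross-terms involving $\nabla\alpha$ in the torus directions versus the $S$-directions enter with the right signs so that the chain of inequalities closes. In particular one must check that the term $g(T_{U_{1}}U_{2},T_{U_{1}}U_{2})$, which we discard with a favorable sign in~(\ref{pr: ON-2}), together with the purely intrinsic conformal-curvature contribution, leaves $K^{\mathbb{T}^{l}}(U_{1},U_{2})\ge 0$ rather than merely $\ge -C$ for some positive constant. I expect that the common warping factor $e^{2\alpha}$ across all fiber directions is exactly what makes this work, and that the argument parallels the preceding proposition almost verbatim once the submanifold identifications are set up. If a residual negative term appears, the fallback is to use the explicit formula for the curvature of a conformally flat metric on $\mathbb{T}^{l}$ and feed it back into the already-established bound $|T_{U}U|\le\sqrt{k}$ from Lemma~\ref{le:bded}.
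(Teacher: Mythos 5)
Your plan is essentially the paper's proof: the paper also reduces the statement to the Gauss equation for the torus inside the fiber, using the non-negativity of the fiber's curvature (already established in the preceding proposition) and the fact that the common conformal factor $e^{2\alpha}$ makes the torus umbilic in the fiber with second fundamental form $\Pi(U,V)=-g_{\mathbb{T}^{l}}(U,V)(\nabla^{F}\alpha)^{\perp}$. The one point you flag as an anticipated obstacle — whether the correction terms close with the right sign — is exactly the content of the paper's short argument, and it resolves more cleanly than your setup suggests: you do not need to pass all the way up to $E$ and juggle three second fundamental forms, because the fiber curvature bound is already available; working purely inside the Riemannian fiber, the normal directions to the torus (the $S$-directions) are positive definite, $\Pi(U_{1},U_{2})=0$ for orthonormal $U_{1},U_{2}$, and the Gauss correction is $+\lvert(\nabla^{F}\alpha)^{\perp}\rvert^{2}\ge 0$, so $K^{\mathbb{T}^{l}}\ge K^{\perp}\ge 0$. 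Two small corrections to your bookkeeping: since $T_{U_{i}}U_{i}=\sqrt{k}\,g(U_{i},U_{i})\nu$ with $g(\nu,\nu)=-1$, one has $g(T_{U_{1}}U_{1},T_{U_{2}}U_{2})=-k\,g(U_{1},U_{1})g(U_{2},U_{2})$, not $+k$ (your displayed identity and the equation built on it would not close as written); and the second fundamental form of the torus in the fiber is normal-valued, so the tangential part of $\nabla^{F}\alpha$ contributes only to the intrinsic conformal connection, not to $\Pi$ — only $(\nabla^{F}\alpha)^{\perp}$ enters.
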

	\begin{proof}
		Let $\Pi$ be the fundamental form of $(\mathbb{T}^{l}, e^{2 \alpha}g_{\mathbb{T}^{l}})$ in the fiber 
		$(\mathbb{T}^{l} \times S, e^{2 \alpha} g_{\mathbb{T}^{l}} +e^{2 \alpha} g_{S})$. 
		Note that the fiber has non-negative curvature. 
		Take any tangent vectors $U, V$ of $\mathbb{T}^{l}$. 
		Then 
		\begin{equation}\label{mean:curv}
		\Pi(U, V)=- g_{\mathbb{T}^{l}}(U, V) (\nabla^{F} \alpha)^{\perp}, 
		\end{equation}
		where $\nabla^{F}$ is the Levi-Civita connection of $(F, g_{F})$ and 
		$(\nabla^{F} \alpha)^{\perp} $ is the component of  $\nabla^{F} \alpha$ perpendicular to the torus. 
		By using the Gaussian equation of the torus in the fiber, we see that the sectional curvature  of 
		$(\mathbb{T}^{l}, e^{2 \alpha}g_{\mathbb{T}^{l}})$  is  non-negative.  
	\end{proof}

	Lemma \label{le : curv} implies the following lemma: 
	\begin{lemma}\label{le:torus}
		$\alpha$ does not depend on the torus.  
	\end{lemma}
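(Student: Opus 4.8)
The plan is to restrict attention to a single torus slice, recognize the induced metric there as a conformal rescaling of a flat metric, and then exploit the transformation law of the scalar curvature together with the compactness of the torus. Concretely, fix $b\in B$ and $s_{0}\in S$ and let $\beta\colon\mathbb{T}^{l}\to\mathbb{R}$ be the restriction of $\alpha$ to the slice $\{b\}\times\mathbb{T}^{l}\times\{s_{0}\}$. From the product description $E=(B\times\mathbb{T}^{l}\times S,\,-g_{B}+e^{2\alpha}g_{\mathbb{T}^{l}}+e^{2\alpha}g_{S})$ the metric induced on this slice is precisely $e^{2\beta}g_{\mathbb{T}^{l}}$, so by Lemma~\ref{le : curv} it has non-negative sectional curvature, hence (as $l\geq 2$) non-negative scalar curvature. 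Since $b$ and $s_{0}$ are arbitrary, it suffices to show that every such $\beta$ is constant.

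The first step is to write down the scalar curvature of $e^{2\beta}g_{\mathbb{T}^{l}}$. Because $g_{\mathbb{T}^{l}}$ is flat, the conformal transformation law of the scalar curvature in dimension $l$ gives
$$
\Sc_{e^{2\beta}g_{\mathbb{T}^{l}}}=e^{-2\beta}\bigl(-2(l-1)\,\Delta\beta-(l-1)(l-2)\,|\nabla\beta|^{2}\bigr),
$$
where $\nabla$ and $\Delta$ denote the gradient and the Laplace operator of the flat metric $g_{\mathbb{T}^{l}}$, normalized so that $\int_{\mathbb{T}^{l}}\Delta f\,dV_{g_{\mathbb{T}^{l}}}=0$ for every function $f$. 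Non-negativity of the left-hand side together with $l\geq 2$ forces $\Delta\beta\leq-\tfrac{l-2}{2}\,|\nabla\beta|^{2}\leq 0$ at every point of $\mathbb{T}^{l}$.

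The second step is to let compactness finish the argument. Integrating the inequality $\Delta\beta\leq 0$ over the closed manifold $\mathbb{T}^{l}$ and using $\int_{\mathbb{T}^{l}}\Delta\beta\,dV_{g_{\mathbb{T}^{l}}}=0$, we obtain $\Delta\beta\equiv 0$, i.e.\ $\beta$ is harmonic on $\mathbb{T}^{l}$; integrating $\beta\,\Delta\beta$ then gives $\int_{\mathbb{T}^{l}}|\nabla\beta|^{2}\,dV_{g_{\mathbb{T}^{l}}}=0$, so $\beta$ is constant. Hence $\alpha$ does not depend on the torus coordinate. (For $l\geq 3$ the harmonicity step is unnecessary: $\Delta\beta\leq-\tfrac{l-2}{2}|\nabla\beta|^{2}$ combined with $\int\Delta\beta=0$ already yields $\nabla\beta\equiv 0$.)

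I expect the analytic content to be entirely routine; the only point requiring care is the identification in the first paragraph, namely that the intrinsic metric of the slice $\{b\}\times\mathbb{T}^{l}\times\{s_{0}\}$ is genuinely the conformally flat metric to which Lemma~\ref{le : curv} applies, so that the conformal-change formula computes its actual scalar curvature. As an alternative one could avoid the conformal formula by applying the splitting theorem (Theorem~\ref{tm:split}) to $(\mathbb{T}^{l},e^{2\beta}g_{\mathbb{T}^{l}})$: its universal cover would then be $\mathbb{R}^{l}$ with a flat product metric, so $e^{2\beta}g_{\mathbb{T}^{l}}$ is flat, and flatness of a metric conformal to a flat one on a closed manifold again forces $\beta$ constant; but the direct computation above is shorter and self-contained.
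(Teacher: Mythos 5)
Your proposal is correct and follows essentially the same route as the paper: both arguments feed the non-negativity of the curvature of $(\mathbb{T}^{l},e^{2\alpha}g_{\mathbb{T}^{l}})$ from Lemma~\ref{le : curv} into the conformal transformation law for the scalar curvature over the flat metric, and then conclude that $\alpha$ is constant on the closed torus. The only (inessential) difference is that you finish by integrating the resulting differential inequality over $\mathbb{T}^{l}$, whereas the paper invokes the maximum principle; these are interchangeable on a closed manifold, and your unified treatment of $l=2$ and $l\geq 3$ is a minor streamlining.
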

	\begin{proof}
		Note that 
		the scalar curvatures $\Sc_{(\mathbb{T}^{l},  g_{\mathbb{T}^{l}})}$ and 
		$\Sc_{(\mathbb{T}^{l}, e^{2 \alpha} g_{\mathbb{T}^{l}})}$ 
		of $(\mathbb{T}^{l}, g_{\mathbb{T}^{l}})$ and  $(\mathbb{T}^{l}, e^{2 \alpha} g_{\mathbb{T}^{l}})$ are 
		zero and non-negative, respectively.   
		Moreover, we have the following formula on scalar curvature: 
		\begin{equation}
		\Sc_{(\mathbb{T}^{l}, e^{2 \alpha } g_{\mathbb{T}^{l}})}=e^{-2 \alpha}(\Sc_{(\mathbb{T}^{l},  g_{\mathbb{T}^{l}})}
		+ 2(l-1) \Delta_{(\mathbb{T}^{l},  g_{\mathbb{T}^{l}})} \alpha - (l-2)(l-1)|d \alpha |^{2}), \label{scal}
		\end{equation}
		where $\Delta_{(\mathbb{T}^{l},  g_{\mathbb{T}^{l}})}$  is the Laplacian of $(\mathbb{T}^{l},  g_{\mathbb{T}^{l}})$.   
		First, we consider the case $l=2$. 
		From the equation~(\ref{scal})   
		it follows that 
		\begin{equation*}
		\Sc_{(\mathbb{T}^{l}, e^{2 \alpha } g_{\mathbb{T}^{l}})}=2 e^{-2 \alpha}  \Delta_{(\mathbb{T}^{l},  g_{\mathbb{T}^{l}})} \alpha . 
		\end{equation*}
		Since $\Sc_{(\mathbb{T}^{l}, e^{2 \alpha } g_{\mathbb{T}^{l}})}$ is non-negative,  $\Delta_{(\mathbb{T}^{l},  g_{\mathbb{T}^{l}})} \alpha \geq 0 $ on the torus.  
		By the maximal principle, $\alpha$ is constant on the torus. 
		Next, we consider the remaining case $l \geq 3$. 
		Then, by equation~(\ref{scal}) we obtain  the following inequality 
		\begin{equation*}
		2 \Delta_{(\mathbb{T}^{l},  g_{\mathbb{T}^{l}})} \alpha \geq  (l-2) |d \alpha |^{2} \geq 0. 
		\end{equation*}
		The maximal principle implies that $\alpha$ is constant on the torus. The proof is complete. 
	\end{proof}

	Therefore, we have
	\begin{lemma}
		$\alpha$ does not depend on the fiber.  
	\end{lemma}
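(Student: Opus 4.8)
The plan is to reuse the non-negativity of the sectional curvature of the fibers, this time against the umbilicity formula~(\ref{mean:curv}) together with Lemma~\ref{le:torus}. We keep the standing assumption $l\geq 2$ and, fixing $b\in B$, regard the fiber as $(\mathbb{T}^{l}\times S,\ e^{2\alpha}g_{\mathbb{T}^{l}}+e^{2\alpha}g_{S})$. The first observation is structural: since by Lemma~\ref{le:torus} the function $\alpha$ is constant along the torus, every slice $\mathbb{T}^{l}\times\{s\}$ inherits the metric $e^{2\alpha(b,s)}g_{\mathbb{T}^{l}}$, a constant multiple of a flat metric, hence flat; its intrinsic sectional curvature is $0$. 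For the same reason the fiberwise gradient $\nabla^{F}\alpha$ is $g$-orthogonal to the torus directions, so $(\nabla^{F}\alpha)^{\perp}=\nabla^{F}\alpha$ and~(\ref{mean:curv}) says that the second fundamental form of $\mathbb{T}^{l}\times\{s\}$ in the fiber is $\Pi(U,V)=-g_{\mathbb{T}^{l}}(U,V)\,\nabla^{F}\alpha$.

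Next I would apply the Gauss equation to this totally umbilical slice. At an arbitrary point of the fiber, pick a pair $V_{1},V_{2}$ of torus-tangent vectors that are orthonormal for the fiber metric — here is where $l\geq 2$ is used. Then $g_{\mathbb{T}^{l}}(V_{i},V_{i})=e^{-2\alpha}$ and $g_{\mathbb{T}^{l}}(V_{1},V_{2})=0$, so $\Pi(V_{i},V_{i})=-e^{-2\alpha}\nabla^{F}\alpha$ and $\Pi(V_{1},V_{2})=0$; substituting into the Gauss equation and using that the slice is flat, one gets $0=K^{\perp}(V_{1},V_{2})+e^{-4\alpha}g(\nabla^{F}\alpha,\nabla^{F}\alpha)$, where $K^{\perp}$ is the sectional curvature of the fiber. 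Because the fiber is Riemannian, $g(\nabla^{F}\alpha,\nabla^{F}\alpha)\geq 0$; because the fibers have non-negative curvature (shown above), $K^{\perp}(V_{1},V_{2})\geq 0$. Hence both terms vanish, so $\nabla^{F}\alpha=0$ at the chosen point, and, the point being arbitrary, $\alpha(b,\cdot)$ is constant on the connected fiber $F$. As $b$ was arbitrary, $\alpha$ depends only on $b$.

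The argument is short once the pieces from the previous lemmas are in place, so the genuine obstacle is not in carrying it out but in its scope: the points to be careful about are purely bookkeeping — that the slice $\mathbb{T}^{l}\times\{s\}$ is really flat (this is exactly the content of Lemma~\ref{le:torus} being invoked), and that the signs in the Gauss equation match the convention used for $\Pi$ in~(\ref{mean:curv}) — while the step that cannot be finessed is the use of $l\geq 2$ to produce two torus-tangent directions spanning a non-degenerate plane inside a flat slice. The remaining case $l=1$ is not reached by this reasoning and has to be handled separately, for instance by examining the mixed sectional curvature $K^{\perp}(W,V)$ with $W$ tangent to $S$ and $V$ tangent to $S^{1}$ and applying the maximum principle to $e^{\alpha}$ on the closed manifold $(S,e^{2\alpha}g_{S})$.
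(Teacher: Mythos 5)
Your proposal is correct and follows essentially the same route as the paper: Lemma~\ref{le:torus} makes the torus slices flat, and the Gauss equation together with the umbilicity formula~(\ref{mean:curv}) and the non-negativity of the fiber curvature forces $(\nabla^{F}\alpha)^{\perp}=0$, hence $\nabla^{F}\alpha=0$. The only (immaterial) difference is that you invoke Lemma~\ref{le:torus} up front to identify $(\nabla^{F}\alpha)^{\perp}$ with $\nabla^{F}\alpha$, whereas the paper concludes $(\nabla^{F}\alpha)^{\perp}=0$ first and then appeals to Lemma~\ref{le:torus}; your closing remark about $l=1$ matches the paper's separate treatment of that case.
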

	\begin{proof}
		Lemma~\ref{le:torus} implies that the sectional curvature of 
		$(\mathbb{T}^{l}, e^{2 \alpha } g_{\mathbb{T}^{l}})$ is zero. 
		Therefore, by equation~(\ref{mean:curv}) and the Gaussian equation, we obtain $ (\nabla^{F} \alpha)^{\perp} =0$. 
		Since $\alpha$ does not depend on the torus by Lemma~\ref{le:torus}, we have $ \nabla^{F} \alpha =0$. 
		The lemma has been proved. 
	\end{proof}

	Next, we consider the case of $l=1$, namely $F_{0}= \mathbb{T}^{1} \times S$, where $S$ is a closed Riemannian manifold. 
	Since $S$ is closed, there exists a closed geodesic $S^{1}$ in $(S, g_{S})$. Therefore, we have 
	the natural immersion 
	$\iota :  \mathbb{T}^{1} \times S^{1}=\mathbb{T}^{2} \rightarrow  \mathbb{T}^{1} \times S$. 
	Let the curvature of $(\mathbb{T}^{2}, e^{2 \alpha \circ \iota} g_{\mathbb{T}^{2}})$ be $\overline{S}$. 
	Then 
	\begin{equation*}
	\overline{S} =2e^{-2 \alpha} \Delta_{(\mathbb{T}^{2}, g_{\mathbb{T}^{2}})}(\alpha \circ \iota).
	\end{equation*}
	Since the fiber has non-negative curvature, $\overline{S} \geq 0$ by using the Gauss equation.  
	The maximal principle implies that $\alpha$ does not depend on 
	the image of 
	$(\mathbb{T}^{1} \times S^{1})$ under the immersion $\iota$. 
	We denote by $U$  the unit vector field on $(\mathbb{T}^{1}, g_{\mathbb{T}^{1}})$ in the fiber.  
	Let $\Ric_{(F,  e^{2 \alpha }g_{F})}$ and $\Ric_{(F,  g_{F})}$  be the Ricci curvatures
	of $(F,  e^{2\alpha }g_{F})$ and  $(F,  g_{F})$, respectively.   
	Then we have 
	\begin{align*}
	\Ric_{(F,  e^2{\alpha }g_{F})}(U,U)=& 
	\Ric_{(F,  g_{F})}(U,U)- (\dim F -2)(
	\Hess \alpha (U,U) - |d \alpha (U)|)  \\
	&+ \Delta_{(F,  g_{F})} \alpha - (\dim F -2)|  d \alpha|^{2} \\
	=&  \Delta_{(F,  g_{F})} \alpha - (\dim F -2)|  d \alpha|^{2}, \\
	\end{align*}
	where $\Delta_{(F,  g_{F})}$ is the Laplacian of  $(F,  g_{F})$.
	Since the left-hand side is positive, 
	\begin{equation*}
	\Delta_{(F,  g_{F})} \alpha \geq (\dim F -2)|  d \alpha|^{2} \geq 0.
	\end{equation*}
	Therefore, by the maximal principle, $\alpha$ is constant 
	on the fiber. 
	We have proved the case of $l=1$.  
	Therefore, Proposition~\ref{prop : str} has been proved.

	It is sufficient to prove Theorem~\ref{5}  for only  warped products.   
	From equation~(\ref{mean curv}) it follows that 
	for any lifts  $\widehat{U}$, $\widehat{V}$  of vector fields  $U$, $V$ on $F$, 
	\begin{equation*}
	T_{\widehat{U}} \widehat{V} = e^{2 \alpha } g_{F}(U,  V)  \nabla^{B} \alpha. 
	\end{equation*}
	It follows that $$\nu = \frac{ 1}{\sqrt{k}}  \nabla^{B} \alpha .$$ 
	Since $g(\nu, \nu)=-1$, we have 
	\begin{equation*}
	g_{B}(  \nabla^{B} \alpha,   \nabla^{B} \alpha) = k. 
	\end{equation*}
	Consider a function $$H=\frac{1}{\sqrt{k}}  \alpha : B \rightarrow \mathbb{R}. $$ 
	Then $H$ is a signed distance function as $g_{B}(\nabla^{B} H,   \nabla^{B} H) = 1$. 
	It follows that $B$ is diffeomorphic to $\mathbb{R} \times H^{-1}(0)$ since $(B, g_{B})$ is complete. 
	Let $\gamma_{0}$ be the geodesic with 
	$\gamma_{0}^{\prime} (s) =  - \nabla^{B} H (\gamma_{0}(s))$ in $(B, g_{B})$. 
	We can construct  incomplete lightlike and timelike geodesics  $\gamma (t) =(\gamma_{B}(t), \gamma_{F} (t))$ in $E$ by using the geodesic $\gamma_{0}$ in $B$. 
	First we consider lightlike geodesics. 
	By using Proposition~\ref{geodesic}, 
	lightlike geodesics satisfy the following equations
	\begin{align*}
	\nabla^{B}_{\partial/ \partial t} \gamma_{B}^{\prime} (t) &= 
	-  e^{2 \alpha}   g_{F}(\gamma_{F}^{\prime}(t),\, {\gamma_{F}}^{\prime}(t)) 
	\nabla^{B} \alpha  \\
	&= - g_{B}(\gamma_{B}^{\prime}(t), \gamma_{B}^{\prime}(t))  \nabla^{B} \alpha; \\
	\nabla^{F}_{\partial/\partial t} \gamma_{F}^{\prime}(t)
	&=-2 \frac{d\alpha (\gamma_{B}(t))}{dt} 
	{\gamma_{F}}^{\prime}(t), 
	\end{align*}
	since  $g_{B}(\gamma_{B}^{\prime}(t), \gamma_{B}^{\prime}(t)) =e^{2 \alpha}g_{F}(\gamma_{F}^{\prime}(t), \gamma_{F}^{\prime}(t))$. 
	Take a solution $s(t)$ of 
	\begin{equation}
	\frac{d^{2} s(t)}{dt^{2}} =\sqrt{k} \left( \frac{d s(t)}{dt} \right)^{2}. \label{eq : para}
	\end{equation} 
	Set $\gamma_{B} (t) = \gamma_{0} (s(t))$. 
	Since
	$\gamma_{B}^{\prime} ( t ) =  \gamma_{0}^{\prime} (s(t)) s^{\prime} (t)$, we have 
	$g_{B}(\gamma_{B}^{\prime}(t), \gamma_{B}^{\prime} ( t ))=s^{\prime} (t)^{2}$. 
	Therefore, we obtain 
	\begin{align*} 
	\nabla_{\partial/ \partial t }\gamma_{B}^{\prime} ( t ) &= s^{\prime \prime} (t)    \gamma_{0}^{\prime} (s(t)) + 
	\nabla_{\partial/ \partial s }\gamma_{0}^{\prime} ( s (t) ) (s^{\prime} (t))^{2} \\
	&=  s^{\prime \prime} (t)    \gamma_{0}^{\prime} (s(t))  \\
	&=  - \sqrt{k} (s^{\prime}(t) )^{2} \nabla^{B} H \\ 
	&= - g_{B}(\gamma_{B}^{\prime}(s),\gamma_{B}^{\prime}(s) ) \nabla^{B} \alpha \\
	\end{align*} 
	Thus, we have $\gamma_{B}(t)$, which means 
	we can also obtain $\gamma_{F}(t)$. 
	We have constructed a lightlike geodesic $(\gamma_{B}(t), \gamma_{F}(t) )$.  
	
	We solve 
	a solution $s (t)$ of  the differential equation~(\ref{eq : para}). 
	The solution is  $$s (t) = - \frac{1}{\sqrt{k}}\log |\sqrt{k} t + C_{1}| + C_{2}, $$ where $C_{1}$ and $C_{2}$ are integral constants.  
	Then $s( (-\frac{C_{1}}{\sqrt{k}}, \infty )) = \mathbb{R}$, and $$\lim_{t \to -\frac{C_{1}}{\sqrt{k}} +0} s(t)=\infty . $$    
	Therefore, the geodesic $(\gamma_{B}(t), \gamma_{F}(t) )$  is not defined on $\mathbb{R}$. 
	We see that the entire semi-Riemannian manifold is not lightlike geodesically complete. 
	
	Next we consider timelike geodesics. 
	By using Proposition~\ref{geodesic}, 
	timelike geodesics $\gamma(t)=(\gamma_{B}(t), \gamma_{F}(t))$ with  $g(\gamma^{\prime} (t), \gamma^{\prime} (t))=-1$
	satisfy the following equations
	\begin{align*}
	\nabla^{B}_{\partial/ \partial t} \gamma_{B}^{\prime} (t) &= 
	-  e^{2 \alpha}   g_{F}(\gamma_{F}^{\prime}(t),\, {\gamma_{F}}^{\prime}(t)) 
	\nabla^{B} \alpha  \\
	&=(1 - g_{B}(\gamma_{B}^{\prime}(t), \gamma_{B}^{\prime}(t)) ) \nabla^{B} \alpha; \\
	\nabla^{F}_{\partial/\partial t} \gamma_{F}^{\prime}(t)
	&=-2 \frac{d\alpha (\gamma_{B}(t))}{dt} 
	{\gamma_{F}}^{\prime}(t), 
	\end{align*}
	since  $-g_{B}(\gamma_{B}^{\prime}(t), \gamma_{B}^{\prime}(t)) +e^{2 \alpha}g_{F}(\gamma_{F}^{\prime}(t), \gamma_{F}^{\prime}(t))=-1$. 
	Assume that $g_{B}(\gamma_{B}^{\prime}(t), \gamma_{B}^{\prime}(t)) \not = 1$, that is 
	$\gamma_{F}^{\prime}(t) \not =  0$. 
	Take a solution $s(t)$ of 
	\begin{equation}
	\frac{d^{2} s(t)}{dt^{2}} =\sqrt{k}\left\{ \left( \frac{d s(t)}{dt}\right)^{2}-1 \right\}, \label{eq : para2}
	\end{equation} 
	where $\frac{d s(t)}{dt} > 1$. 
	Set $\gamma_{B} (t) = \gamma_{0} (s(t))$. 
	In the same manner as for the case of lightlike geodesics, 
	 we have 
	 \begin{equation*}
	\nabla_{\partial/ \partial t }\gamma_{B}^{\prime} ( t ) = (1- g_{B}(\gamma_{B}^{\prime}(s),\gamma_{B}^{\prime}(s) )) \nabla^{B} \alpha .
	\end{equation*} 
	Thus, we obtain $\gamma_{B}(t)$ and  $\gamma_{F}(t)$, which means we  
	have constructed a timelike geodesic $(\gamma_{B}(t), \gamma_{F}(t) )$.  
	We obtain the solution $s (t)$ of  the differential equation~(\ref{eq : para2}). 
	The solution is  $$s (t) = - \frac{1}{\sqrt{k}}\log | C_{1} e^{2 \sqrt{k}t} - 1 | + t + C_{2}, $$ where $C_{1} > 0 $ and $C_{2}$ are integral constants.  
	Note that $C_{1}>0$ follows from $\frac{d s(t)}{dt} > 1$. 
	Then $s( (- \infty , \frac{1}{2\sqrt{k}} \log \frac{1}{C_{1}}  )) = \mathbb{R}$, and $$\lim_{t \to \frac{1}{2\sqrt{k} } \log \frac{1}{C_{1}} -0 } s(t)=\infty . $$    
	Therefore, the geodesic $(\gamma_{B}(t), \gamma_{F}(t) )$  is not defined on $\mathbb{R}$. 
	We see that the entire semi-Riemannian manifold is not timelike geodesically complete. This is a contradiction.

	Hence, we have proved that the fiber never includes tori. 
	It follows that the fundamental group of the fiber is finite. 
	The proof of Theorem~\ref{5} is complete.

	\section{Semi-Riemannian example with non-integrable horizontal distribution}
	
	In this section, we construct a semi-Riemannian submersion such that 
	the entire space satisfies the curvature condition $R \geq k >0$ and that the horizontal distribution is not integrable. 
	Set $G = SU(2,1)=\left\{ g \in SL(2, \mathbb{C}) : 
	^{t} \overline{g} I_{2,1}
	g = I_{2,1}
	 \right\}$, where 
	 $I_{2,1}=
	  \begin{pmatrix}
	1 &0 & 0 \\
	0 & 1 & 0 \\
	0 & 0 & -1 
	\end{pmatrix}$. 
	Let $\mathfrak{g}$  be the Lie algebra of $G$. 
	The indefinite inner product $B(-,-)$ on $\mathfrak{g}$  is given  by
	\begin{equation*}
	B(X, Y)=- \Re \tr (XY).
	\end{equation*}
	This inner product is invariant under the adjoint action $Ad (g)$ for $g \in G$. 
	We write $K$ for a maximal compact subgroup   
	\begin{equation*}
	\left\{ 
	\begin{pmatrix}
	A & 0 \\
	0 & (\det A)^{-1}
	\end{pmatrix} :
	A \in U(2)
	\right\}.
	\end{equation*}
	Let $\mathfrak{k}$ be the Lie algebra of $K$. 
	Then  $B$ is positive definite on $\mathfrak{k}$ and negative definite on $\mathfrak{k}^{\perp}$.  
	Note that the homogeneous space $G/K$ with the induced metric from $-B$  is a complex hyperbolic plane $\mathbb{CH}^{2}$. 
        Write
	\begin{equation*}
	H=\left\{ 
	\begin{pmatrix}
	e^{2 \pi t i} & 0 & 0\\
	0 & e^{2 \pi t i} & 0\\
	 0 & 0 & e^{-4 \pi t i}\\
	\end{pmatrix} :
	t \in \mathbb{R}
	\right\} \subset K.
	\end{equation*}
	Note that $H$ is  the circle $S^{1}$.   
	Let $\mathfrak{h}_{0}$ be the Lie algebra of $H$. 
	We denote $\mathfrak{h}_{0}^{\perp} \cap \mathfrak{k}$ and $\mathfrak{k}^{\perp}$ 
	by $\mathfrak{h}_{1}$ and $\mathfrak{h}_{2}$, respectively. 
	Then we have $\mathfrak{g}=\mathfrak{h}_{0}\oplus\mathfrak{h}_{1} \oplus \mathfrak{h}_{2}$. 
	We write the projection of a vector $X \in  \mathfrak{g}$ onto the subspace
	$\mathfrak{h}_{j}$  as $X_{j}$ for each $j$. 
	Let us take the following basis of $\mathfrak{g}$: 
	\begin{align*}
	&e_{1}=
	\begin{pmatrix}
	i & 0 & 0 \\
	0 & i  & 0 \\
	0 & 0 & -2 i
	\end{pmatrix}, 
	e_{2}=
	\begin{pmatrix}
	i & 0 & 0 \\
	0 & -i  & 0 \\
	0 & 0 & 0
	\end{pmatrix}, 
	e_{3}=
	\begin{pmatrix}
	0 & 1 & 0 \\
	-1 & 0  & 0 \\
	0 & 0 & 0
	\end{pmatrix},
	e_{4}=
	\begin{pmatrix}
	0 & i & 0 \\
	i & 0  & 0 \\
	0 & 0 & 0
	\end{pmatrix}, \\  
	&f_{1}=
	\begin{pmatrix}
	0 & 0 & 1 \\
	0 & 0  & 0 \\
	1 & 0 & 0
	\end{pmatrix}, 
	f_{2}=
	\begin{pmatrix}
	0& 0 & 0 \\
	0 & 0  & 1 \\
	0 & 1 & 0
	\end{pmatrix}, 
	f_{3}=
	\begin{pmatrix}
	0 & 0 & i \\
	0 & 0  & 0 \\
	-i & 0 & 0
	\end{pmatrix},
	f_{4}=
	\begin{pmatrix}
	0 & 0 & 0 \\
	0 & 0  & i \\
	0 & -i & 0
	\end{pmatrix}. 
	\end{align*}
	Then $\mathfrak{h}_{0}$ is spanned by $e_{1}$, $\mathfrak{h}_{1}$ is spanned by $e_{2}$, $e_{3}$, $e_{4}$, and 
	$\mathfrak{h}_{2}$ is spanned by $f_{1}$, $f_{2}$, $f_{3}$, $f_{4}$. 
	Let $M$ be  a homogeneous space $G/H$. 
	We define the indefinite inner metric $(-,-)$ on $\mathfrak{g}/\mathfrak{h}_{0}$ by $(X,Y)= (1+t)B(X_{1},Y_{1})+B(X_{2},Y_{2})$ for 
	$X,Y \in \mathfrak{g}/\mathfrak{h}_{0}$ and can also define the semi-Riemannian metric $(-,-)$ on $G/H$, which is left-invariant under $G$.  
	We see that the projective map $\pi : M \rightarrow \mathbb{CH}^{2} $ is a semi-Riemannian submersion. 
	
	We will show that $M$ satisfies the curvature condition $R \geq k>0$.    
	We denote the curvature tensor of $(M, (-,-))$ by $R$.    
	Note that $[\mathfrak{h}_{1}, \mathfrak{h}_{1}]\subset \mathfrak{h}_{1}$,  
	$[\mathfrak{h}_{2}, \mathfrak{h}_{2}]\subset \mathfrak{h}_{0}\oplus\mathfrak{h}_{1}$,  
	and $[\mathfrak{h}_{1}, \mathfrak{h}_{2}]\subset \mathfrak{h}_{2}$.   
	Then we have 
	\begin{align*}
	(R(X,Y)Y, X)=&
	\frac{1-3t}{4}B([X,Y]_{1}, [X,Y]_{1})
	+(t-t^{2})B([X_{1},Y_{1}], [X,Y]) \\
	&+t^{2}B([X_{1},Y_{1}], [X_{1},Y_{1}])
	+\frac{(1+t)^{2}}{4}B([X,Y]_{2}, [X,Y]_{2}) \\
	&+B([X,Y]_{0}, [X,Y]_{0}) \\
	=&
	\frac{1+t}{4}B([X_{1},Y_{1}], [X_{1},Y_{1}])
	+\frac{1-3t}{4}B([X_{2},Y_{2}]_{1}, [X_{2},Y_{2}]_{1}) \\
	&+\frac{1-t-2t^{2}}{2}B([X_{1},Y_{1}], [X_{2},Y_{2}]_{1})
	+\frac{(1+t)^{2}}{4}B([X,Y]_{2}, [X,Y]_{2}) \\
	&
	+B([X,Y]_{0}, [X,Y]_{0}).
	\end{align*}
	Set $X=a_{2}e_{2}+a_{3}e_{3}+a_{4}e_{4}+b_{1}f_{1}+b_{2}f_{2}+b_{3}f_{3}+b_{4}f_{4}$, 
	$Y=c_{2}e_{2}+c_{3}e_{3}+c_{4}e_{4}+d_{1}f_{1}+d_{2}f_{2}+d_{3}f_{3}+d_{4}f_{4}$. 
	Then, straightforward computation implies 
	\begin{align*}
	B([X,Y]_{2}, [X,Y]_{2}) =& 
	-2\biggl\{ 
	\left( \det 
	\begin{pmatrix}
	a_{3} & c_{3} \\
	b_{2} & d_{2}
	\end{pmatrix}
	- 
	 \det 
	\begin{pmatrix}
	a_{2} & c_{2} \\
	b_{3} & d_{3}
	\end{pmatrix}
	-
	\det 
	\begin{pmatrix}
	a_{4} & c_{4} \\
	b_{4} & d_{4}
	\end{pmatrix}
	\right)^{2} \\
	&+ 
	\left( \det 
	\begin{pmatrix}
	a_{2} & c_{2} \\
	b_{4} & d_{4}
	\end{pmatrix}
	- 
	 \det 
	\begin{pmatrix}
	a_{3} & c_{3} \\
	b_{1} & d_{1}
	\end{pmatrix} 
	-
	\det 
	\begin{pmatrix}
	a_{4} & c_{4} \\
	b_{3} & d_{3}
	\end{pmatrix}
	\right)^{2} \\
	&+
	\left( \det 
	\begin{pmatrix}
	a_{2} & c_{2} \\
	b_{1} & d_{1}
	\end{pmatrix}
	+ 
	 \det 
	\begin{pmatrix}
	a_{3} & c_{3} \\
	b_{4} & d_{4}
	\end{pmatrix}
	+
	\det 
	\begin{pmatrix}
	a_{4} & c_{4} \\
	b_{2} & d_{2}
	\end{pmatrix}
	\right)^{2} \\
	&+
	\left( \det 
	\begin{pmatrix}
	a_{2} & c_{2} \\
	b_{2} & d_{2}
	\end{pmatrix}
	+ 
	 \det 
	\begin{pmatrix}
	a_{3} & c_{3} \\
	b_{3} & d_{3}
	\end{pmatrix} 
	-
	\det 
	\begin{pmatrix}
	a_{4} & c_{4} \\
	b_{1} & d_{1}
	\end{pmatrix}
	\right)^{2}  \biggr\} \\
	=&-2 \sum_{i,j}\left( 
	\det
	\begin{pmatrix}
	a_{i} & c_{i} \\
	b_{j} & d_{j}
	\end{pmatrix}\right)^{2}
	+B([X_{1},Y_{1}], [X_{2},Y_{2}]_{1}). 
	\end{align*}
	By the Cauchy--Schwarz inequality, we have 
	\begin{equation*}
	B([X_{1},Y_{1}], [X_{2},Y_{2}]_{1}) \leq 
	\sqrt{B([X_{1},Y_{1}], [X_{1},Y_{1}] ) B( [X_{2},Y_{2}]_{1} ,  [X_{2},Y_{2}]_{1}) }.
	\end{equation*}
	Set 
	\begin{align*} 
	x= \sqrt{\sum_{i<j}\left(\det 
	\begin{pmatrix}
	a_{i} & c_{i} \\
	a_{j} & c_{j}
	\end{pmatrix}
	\right)^{2}}, 
	y=\sqrt{ \sum_{i<j}\left(\det 
	\begin{pmatrix}
	b_{i} & d_{i} \\
	b_{j} & d_{j}
	\end{pmatrix}
	\right)^{2} },
	z= \sqrt{\sum_{i , j}\left( \det 
	\begin{pmatrix}
	a_{i} & c_{i} \\
	b_{j} & d_{j}
	\end{pmatrix}
	\right)^{2} }.
	\end{align*}
	We should remark that
	\begin{equation*}
	B([X_{1},Y_{1}], [X_{1},Y_{1}] )=8 x^{2},\quad  B( [X_{2},Y_{2}]_{1} ,  [X_{2},Y_{2}]_{1})=2y^{2}.
	\end{equation*}
	Then we have
	\begin{align*}
	(R(X,Y)Y, X) &\geq
	2(1+t) x^{2}
	+\frac{1-3t}{2} y^{2} -\frac{(1+t)^{2}}{2} z^{2}
	- \left| \frac{1-t-2t^{2}}{2} +\frac{(1+t)^{2}}{4}\right| \sqrt{16x^{2}y^{2}} \\
	&=2(1+t)x^{2}-3|1-t^{2}|xy+\frac{1-3t}{2}y^{2}-\frac{(1+t)^{2}}{2}z^{2}.
	\end{align*} 
	Note that
	\begin{align*}
	(X,X)(Y,Y)-(X,Y)^{2}=4(1+t)^{2}x^{2}+4 y^{2} - 4(1+t)z^{2}.
	\end{align*}
	Therefore, we obtain 
	\begin{align*}
	&(R(X,Y)Y, X)-k( (X,X)(Y,Y)-(X,Y)^{2}) \\
	&\geq 
	\{2(1+t)-4k (1+t)^{2} \}x^{2}-3|1-t^{2}|xy+\left\{ \frac{1-3t}{2} - 4k \right\}y^{2}+ \left(4k(1+t)- \frac{(1+t)^{2}}{2}  \right) z^{2}.
	\end{align*}
	Hence, it is sufficient to prove that there exists $k>0$ and $t > -1$ such that 
	\begin{equation*}
	\{2(1+t)-4k (1+t)^{2} \}x^{2}-3|1-t^{2}|xy+\left\{ \frac{1-3t}{2} - 4k \right\}y^{2}+ \left\{4k(1+t)- \frac{(1+t)^{2}}{2}  \right\} z^{2}
	> 0. 
	\end{equation*}
	Therefore, 
	our search is reduced to finding $k>0$ and $t > -1$ satisfying  the following four inequalities:  
	\begin{gather}
	k>\frac{1+t}{8}, \label{ineq1}\\ 
	k< \frac{1}{2(1+t)}, \label{ineq2}\\ 
	k<\frac{1-3t}{8}, \label{ineq3} \\
	\{2(1+t)-4k (1+t)^{2} \}	\left( \frac{1-3t}{2} - 4k \right) -\frac{9}{4}(1-t^{2})^{2} >0. \label{ineq4}
	\end{gather}
	From the inequalities (\ref{ineq1}), (\ref{ineq2}), and (\ref{ineq3}), we see that 
	\begin{equation*}
	\frac{1+t}{8} < k < \frac{1-3t}{8} < \frac{1}{2(1+t)},
	\end{equation*}
	for $-1 < t <0$. 
	The inequality (\ref{ineq4}) is  
	\begin{equation*}
	(t+1)\left\{16(1+t)k^{2}+2(3 t^{2} +2 t -5) k - \frac{1}{4}(9 t^{3} - 9t^{2}+3t+5 )\right\}>0. 
	\end{equation*} 
	Set 
	$$\eta (t)=\frac{-3t^{2} -2 t +5 -\sqrt{45 t^{4}  +12 t^{3} -50 t^{2} +12 t +45}  }{16(t+1)}.$$ 
	We note that $\eta (t)$ is a solution of the equation with respect to $k$  
	 that the left-hand side of the  
	inequality (\ref{ineq4}) is zero. 
	Then  the inequalities  (\ref{ineq1}), (\ref{ineq2}),  (\ref{ineq3}), and  (\ref{ineq4})  hold for $-1 < t < -\frac{3}{5}$ and 
	$\frac{1+t}{8} < k < \eta (t)$.

	 We will prove that $M$ is geodesically complete. 
	 Since a natural projection $G \rightarrow M$ is a semi-Riemannian submersion, 
	 we consider only geodesics $\gamma(u)$ in $G$ from the identity element  
 	  of which the initial velocity belongs to $\mathfrak{h}_{1}\oplus \mathfrak{h}_{2}$. 
	  Let $\Gamma(u)$ be a curve $\gamma(u)^{-1}\gamma^{\prime}(u)$ in $\mathfrak{h}_{1} \oplus \mathfrak{h}_{2}$. 
	  We denote by $\Gamma_{j} (u)$ the $\mathfrak{h}_{j}$-component of $\Gamma(u)$. 
	  Set a linear operator $\phi : \mathfrak{g} \rightarrow  \mathfrak{g}$ such that 
	  $\phi (X_{0}+X_{1}+X_{2})=X_{0}+ (1+t)X_{1}+X_{2}$ for any $X_{j} \in \mathfrak{h}_{j}$. 
	  Note that we have $(X,Y)=B(\phi(X), Y)$. 
	 Then $\gamma(u)$ satisfies the following Euler--Arnold equation: 
	 \begin{equation*}
	 \phi( \Gamma^{\prime}(u)) = [\phi(\Gamma (u)), \Gamma(u)] 
	 \end{equation*}
	 (for instance see  \cite{MR1310948}). 
	 Therefore, we have 
	 \begin{align}
	 &(1+t) \Gamma_{1}^{\prime}(u)=0, \label{eq:41}
	  \\ &\Gamma_{2}^{\prime}(u)=t[\Gamma_{1}(u), \Gamma_{2}(u)]. \label{eq:42}
	  \end{align}
	  The equation (\ref{eq:41}) implies $\Gamma_{1}(u)$ is constant, denoted by $v_{1}$.  
	  Then, the equation (\ref{eq:42}) means  
	  $\Gamma_{2}^{\prime}(u)=t[v_{1},  \Gamma_{2}(u)]$, which is a system of first-order linear differential 
	  equations with a constant coefficient. 
	  It follows that $\gamma(u)$  is defined in the real line $\mathbb{R}$. 
	  Hence, the semi-Riemannian manifold $M$ we have constructed is the desired solution.  
	 
	\providecommand{\bysame}{\leavevmode\hbox to3em{\hrulefill}\thinspace}
	\providecommand{\MR}{\relax\ifhmode\unskip\space\fi MR }
	\providecommand{\MRhref}[2]{%
		\href{http://www.ams.org/mathscinet-getitem?mr=#1}{#2}
	}
	\providecommand{\href}[2]{#2}
	

\begin{thebibliography}{DRGRH05}
		
		\bibitem[AB08]{MR2425468}
		S.~B.~Alexander and R.~L.~Bishop, \emph{Lorentz and semi-{R}iemannian spaces
			with {A}lexandrov curvature bounds}, Comm. Anal. Geom. \textbf{16} (2008),
		no.~2, 251--282.
		
		\bibitem[AH98]{MR1664893}
		L.~Andersson and R.~Howard, \emph{Comparison and rigidity theorems in
			semi-{R}iemannian geometry}, Comm. Anal. Geom. \textbf{6} (1998), no.~4,
		819--877.
		
		\bibitem[BE87]{MR898152}
		J.~K.~Beem and P.~E.~Ehrlich, \emph{Geodesic completeness and stability}, Math.
		Proc. Cambridge Philos. Soc. \textbf{102} (1987), no.~2, 319--328.
		
		\bibitem[CG72]{MR0303460}
		J.~Cheeger and D.~Gromoll, \emph{The splitting theorem for manifolds of
			nonnegative {R}icci curvature}, J. Differential Geometry \textbf{6}
		(1971/72), 119--128.
		
		\bibitem[Che81]{MR627323}
		B.-Y.~Chen, \emph{Geometry of submanifolds and its applications}, Science
		University of Tokyo, Tokyo, 1981.
		
		\bibitem[CM62]{Ca}
		E.~Calabi and L.~Markus, \emph{Relativistic space forms}, Ann. of Math. (2)
		\textbf{75} (1962), 63--76.
		
		\bibitem[DRGRH05]{MR2148906}
		J.~C.~D{\'{\i}}az-Ramos, E.~Garc{\'{\i}}a-R{\'{\i}}o, and L.~M.~Hervella, 
		\emph{Comparison results for the volume of geodesic celestial
			spheres in {L}orentzian manifolds}, Differential Geom. Appl. \textbf{23}
		(2005), no.~1, 1--16.
		
		\bibitem[GL95]{MR1310948}
		M.~Guediri, J.~Lafontaine, 
	 \emph{Sur la compl\'etude des vari\'et\'es pseudo-riemanniennes}, J. Geom. Phys.  \textbf{15}
		(1995), no.~2, 150--158.
		
		\bibitem[Kob01]{Kob_7}
		T.~Kobayashi, \emph{Discontinuous groups for non-{R}iemannian homogeneous
			spaces}, Mathematics unlimited---2001 and beyond, Springer, Berlin, 2001,
		pp.~723--747.
		
		\bibitem[Kul79]{MR522040}
		R.~S.~Kulkarni, \emph{The values of sectional curvature in indefinite metrics},
		Comment. Math. Helv. \textbf{54} (1979), no.~1, 173--176.
		
		\bibitem[Muk15]{MR3353737}
		J.~Mukuno, \emph{On the fundamental group of a complete globally hyperbolic
			{L}orentzian manifold with a lower bound for the curvature tensor},
		Differential Geom. Appl. \textbf{41} (2015), 33--38.
		
		\bibitem[O'N66]{MR0200865}
		B.~O'Neill, \emph{The fundamental equations of a submersion}, Michigan Math. J.
		\textbf{13} (1966), 459--469.
		
		\bibitem[O'N67]{MR0216432}
		\bysame, \emph{Submersions and geodesics}, Duke Math. J. \textbf{34} (1967),
		363--373.
		
		\bibitem[O'N83]{On}
		\bysame, \emph{Semi-{R}iemannian geometry with applications to relativity},
		Pure and Applied Mathematics, vol. 103, Academic Press Inc. [Harcourt Brace
		Jovanovich Publishers], New York, 1983.
		
		\bibitem[Top59]{MR0108808}
		V.~A.~Toponogov, \emph{Riemannian spaces containing straight lines}, Dokl.
		Akad. Nauk SSSR \textbf{127} (1959), 977--979.
		
		\bibitem[Wal72]{Wa}
		N.~R.~Wallach, \emph{Compact Homogeneous Riemannian Manifolds with Strictly Positive Curvature},
		Ann. of Math. (2) \textbf{96} (1972), 277--295.
		
		\bibitem[Wol62]{Wo}
		J.~A.~Wolf, \emph{The {C}lifford-{K}lein space forms of indefinite metric},
		Ann. of Math. (2) \textbf{75} (1962), 77--80.
		
	\end{thebibliography}
\end{document}